\numberwithin{equation}{section}
\newtheorem{theorem}{Theorem}[section]
\newtheorem{proposition}[theorem]{Proposition}
\newtheorem{corollary}[theorem]{Corollary}
\newtheorem{conjecture}[theorem]{Conjecture}
\newtheorem{lemma}[theorem]{Lemma}
\theoremstyle{definition}
\newtheorem{definition}[theorem]{Definition}
\newtheorem{example}[theorem]{Example}
\newtheorem{remark}[theorem]{Remark}
\newtheorem{notation}[theorem]{Notation}
\DeclareMathOperator{\skel}{Skel}
\DeclareMathOperator\lk{\mathrm{lk}}
\DeclareMathOperator{\dist}{\mathrm{dist}}
\newcommand{\BH}{\mathcal{BH}}
\newcommand{\BM}{BM}
\newcommand{\ST}{\mathcal{ST}}
\newcommand{\field}{{\bf k}}
\newcommand{\R}{{\mathbb R}}
\newcommand{\Q}{{\mathbb Q}}
\newcommand{\Z}{{\mathbb Z}}
\newcommand{\C}{{\mathcal C}}
\newcommand{\X}{{\mathcal X}}
\newcommand{\halffloor}{\lfloor \frac{d}{2}\rfloor}
\title{Lower Bound Theorems and a Generalized Lower Bound Conjecture for balanced simplicial complexes}
\author{Steven Klee\\
\small Department of Mathematics \\[-0.8ex]
\small Seattle University\\[-0.8ex]
\small Seattle, WA 98122, USA\\[-0.8ex]
\small \texttt{klees@seattleu.edu}
\and Isabella Novik
\thanks{Research is partially
supported by NSF grants DMS-1069298 and DMS-1361423}\\
\small Department of Mathematics\\[-0.8ex]
\small University of Washington\\[-0.8ex]
\small Seattle, WA 98195-4350, USA\\[-0.8ex]
\small \texttt{novik@math.washington.edu}
}
\begin{document}

\maketitle


\begin{abstract}
A  $(d-1)$-dimensional simplicial complex is called balanced if its underlying graph admits a proper $d$-coloring. We show that many well-known face enumeration results have natural balanced analogs (or at least conjectural analogs). Specifically, we prove the balanced analog of the celebrated Lower Bound Theorem for normal pseudomanifolds and characterize the case of equality; we introduce and characterize the balanced analog of the Walkup class; we propose the balanced analog of the Generalized Lower Bound Conjecture and establish some related results. We close with constructions of balanced manifolds with few vertices. 
\end{abstract}

\section{Introduction}

This paper is devoted to the study of face numbers of balanced simplicial complexes. A $(d-1)$-dimensional simplicial complex $\Delta$ is called \textit{balanced} if the graph of $\Delta$ is $d$-colorable. In other words, each vertex of $\Delta$ can be assigned one of  $d$ colors in such a way that no edge has both endpoints of the same color. This class of complexes was introduced by Stanley  \cite{Stanley-balanced} where he called them \textit{completely balanced} complexes.
 
Balanced complexes form a fascinating class of objects that arise often in combinatorics, algebra, and topology.  For instance, the barycentric subdivision of \textit{any} regular CW complex is balanced; therefore, every triangulable space has a balanced triangulation.  A great deal of research has been done on the flag face numbers of balanced spheres that arise as the barycentric subdivision of regular CW-complexes in the context of the $cd$-index, see \cite{Stanley-cd, Karu-cd, Ehrenborg-Karu-cd} and many references mentioned therein.  In contrast, we know very little about the face numbers of an arbitrary balanced simplicial sphere or manifold: for instance, even the answer to the question of ``What is the smallest number of vertices that a balanced triangulation of a closed $(d-1)$-manifold that is not a sphere can have?" seems to be unknown. One of our results provides such an answer for the categories of PL manifolds and homology manifolds.  
 
Stanley \cite{Stanley-balanced} initiated the study of (flag) face numbers of balanced Cohen--Macaulay complexes. His paper together with the work of Bj\"orner--Frankl--Stanley \cite{Bjorner-Frankl-Stanley} provided a complete characterization of all possible flag $f$-numbers of such complexes.  On the level of face numbers, this characterization  says that an integer vector is the $h$-vector of a $(d-1)$-dimensional balanced Cohen--Macaulay complex if and only if it is the $f$-vector of a $d$-colorable simplicial complex. Hence the $h$-vectors of balanced Cohen--Macaulay complexes satisfy Kruskal--Katona type inequalities of \cite{Frankl-Furedi-Kalai}. On the other hand, in the class of \textit{all simplicial polytopes}, the celebrated $g$-theorem \cite{Stanley-gthm, Billera-Lee} provides much stronger restrictions, and the $g$-conjecture posits that the same restrictions hold for all triangulated  spheres. Moreover, a part of the $g$-theorem --- the Lower Bound Theorem --- holds even in the generality of normal pseudomanifolds \cite{Barnette-LBT-pseudomanifolds, Kalai-rigidity, Fogelsanger, Tay}.
 
Are there balanced analogs of the aforementioned results? This question served as the main motivation and starting point for this paper.  Another motivation came from a flurry of recent activity on finding vertex-minimal triangulations of various manifolds (see, for instance, \cite{Lutz-thesis, Lutz-Sulanke-Swartz, Bagchi-Datta-bundles, Chestnut-Sapir-Swartz} as well the Manifold Page \cite{Lutz-page}) and our desire to  find balanced analogs of at least some of these constructions.
 
To our surprise, we found that several classic face enumeration results have natural balanced analogs or at least conjectural balanced analogs. Our results can be summarized as follows. We defer all the definitions until later sections. Many basic definitions and results pertaining to  simplicial complexes and balanced simplicial complexes are collected in Section 2. 
 
\begin{itemize}
\item In Section 3, we show that the balanced analog of the Lower Bound Theorem (established in \cite{Goff-Klee-Novik} for balanced spheres and in \cite{Browder-Klee} for balanced manifolds) continues to hold for all balanced triangulations of normal pseudomanifolds, see Theorem \ref{balancedLBT}.
 
\item In Section 4, we treat extremal cases of the balanced Lower Bound Theorem: in analogy with the non-balanced situation  \cite{Kalai-rigidity}, we show that a balanced normal pseudomanifold of dimension $d-1\geq 3$ satisfies the balanced Lower Bound Theorem with equality if and only if it is a ``stacked cross-polytopal sphere," see Theorem \ref{balancedLBT-equality}.
 
\item In Section 4, we also introduce the balanced analog of Walkup's class  and  show that in complete analogy with the non-balanced case \cite{Walkup, Kalai-rigidity}, for $d\geq 5$, a balanced  $(d-1)$-dimensional complex is in the balanced  Walkup class if and only if all its vertex links are stacked cross-polytopal spheres, see Corollary \ref{balanced8.4}.
 
\item In Section 5, we posit the balanced analog of McMullen-Walkup's Generalized Lower Bound Conjecture \cite{McMullen-Walkup} (together with the treatment of equality cases), see Conjecture \ref{balancedGLBC}. We should stress that while the Generalized Lower Bound Conjecture for the class of simplicial polytopes is now a theorem (the inequalities of the GLBC form a part of the $g$-theorem, and the treatment of equality cases was recently completed in \cite{Murai-Nevo}),  the balanced GLBC is at present a conjecture, even for the class of balanced polytopes. We verify the ``easy" part of the equality case of this conjecture, see Theorem \ref{thm:balancedGLBC-2b-implies-2a},  and prove that, in analogy with the results from \cite{Bagchi-Datta-stellated, Murai-Nevo2}, manifolds with the balanced $r$-stacked property have a unique $r$--stacked cross-polytopal decomposition, see Theorem \ref{uniqueness_thm}.
 
\item In Section 6 we discuss balanced triangulations of manifolds. In particular, we construct  a vertex minimal balanced triangulation of the  (orientable or non-orientable, depending on the parity of $d$) $\mathbb{S}^{d-2}$-bundle over $\mathbb{S}^1$, see Theorem \ref{thm:3d-vertex-construction}. Our construction can be considered a balanced analog of K\"uhnel's construction \cite{Kuhnel}. By introducing only two more vertices, we are able to construct balanced triangulations of both of these bundles, see Theorem \ref{Delta_k,d}. 
\end{itemize}
 
Although several of our proofs follow along the lines of their non-balanced analogs, we believe that these results are rather unexpected and provide new insights in the theory of the face numbers for balanced complexes. We also hope that this paper will motivate an even more thorough study of balanced complexes (note that at present we do not have even a conjectural balanced analog of the Upper Bound Theorem) as well as will lead to new construction techniques for balanced manifolds.

\section{Preliminaries}

\subsection{Topological and combinatorial invariants of simplicial complexes}
A \textit{simplicial complex} $\Delta$ on a finite vertex set $V = V(\Delta)$ is a collection of subsets of $V$ called \textit{faces} with the property that (i) $\{v\}\in\Delta$ for all $v\in V$, and (ii) if $F \in \Delta$ and $G \subseteq F$, then $G \in \Delta$.  The \textit{dimension} of a face $F \in \Delta$ is $\dim(F) = |F|-1$, and the dimension of $\Delta$ is $\dim(\Delta) = \max\{\dim(F)\ : \ F \in \Delta\}$. For brevity, we refer to an $i$-dimensional face as an $i$-face. A \textit{facet} in $\Delta$ is a maximal face under inclusion, and we say that $\Delta$ is \textit{pure} if all of its facets have the same dimension.  

The \textit{link} of a face $F \in \Delta$ is the subcomplex $$\lk_{\Delta}(F) = \{G \in \Delta\ : \ F \cap G = \emptyset \text{ and } F \cup G \in \Delta\}.$$  Intuitively, the link encodes the local structure of the simplicial complex $\Delta$ around the face $F$.  If $W \subseteq V(\Delta)$ is any subset of vertices, we define the \textit{restriction} of $\Delta$ to $W$ to be the subcomplex $$\Delta[W] = \{F \in \Delta\ : \ F \subseteq W\}.$$  The \textit{$i$-skeleton} of $\Delta$, $\skel_i(\Delta)$, is the subcomplex of all faces of $\Delta$ of dimension at most $i$. The 1-skeleton is also called the \textit{graph} of $\Delta$.

Although the above definition of a simplicial complex is an abstract combinatorial construction, to any (abstract) simplicial complex $\Delta$ there is an associated topological space $\|\Delta\|$ called the \textit{geometric realization} of $\Delta$ (or the underlying topological space), which contains a $(k-1)$-dimensional geometric simplex for each $(k-1)$-face of $\Delta$.  We frequently will not emphasize the distinction between $\Delta$ and $\|\Delta\|$ and will refer to topological properties of $\|\Delta\|$ simply as topological properties of $\Delta$.  

We will be interested in studying certain relaxations of the family of triangulations of spheres and manifolds.  A $(d-1)$-dimensional simplicial complex $\Delta$ is a simplicial $(d-1)$-sphere (respectively simplicial ball or simplicial manifold) if its geometric realization $\|\Delta\|$ is homeomorphic to a sphere (resp.~ball or manifold) of dimension $d-1$.  

It follows from the excision axiom that if $\Delta$ is a simplicial complex and $p \in \|\Delta\|$ is a point that lies in the relative interior of a face $F$ of $\Delta$, then $H_i(\|\Delta\|,\|\Delta\|-p;\field) \cong \widetilde{H}_{i-|F|}(\lk_{\Delta}(F);\field)$ (see \cite[Lemma 3.3]{Munkres}).  Here, $\widetilde{H}_*(\Delta;\field)$ denotes the reduced simplicial homology groups of $\Delta$ with coefficients in $\field$ and $H_*(\|\Delta\|,\|\Delta\|-p;\field)$ denotes the relative homology groups. (We will also use $\beta_i(\Delta;\field)$ to denote the reduced Betti numbers of $\Delta$ with coefficients in $\field$: $\beta_i(\Delta;\field):= \dim_{\field}\widetilde{H}_i(\Delta;\field)$.) On the other hand, if $\Delta$ triangulates a manifold without boundary and $p$ is a point of $\|\Delta\|$, then the pair $(\|\Delta\|,\|\Delta\|-p)$ has the relative homology of a $(d-1)$-sphere.

Thus as a relaxation of the family of simplicial spheres/manifolds, we say that $\Delta$ is a homology $(d-1)$-sphere over a field $\field$ (or a $\field$-homology sphere) if $\widetilde{H}_*(\lk_{\Delta}(F);\field) \cong \widetilde{H}_*(\mathbb{S}^{d-|F|-1};\field)$ for every face $F \in \Delta$ (including the empty face), and that  $\Delta$ is a (closed) homology  $(d-1)$-manifold over $\field$ if $\widetilde{H}_*(\lk_{\Delta}(F);\field) \cong \widetilde{H}_*(\mathbb{S}^{d-|F|-1};\field)$ for every nonempty face $F \in \Delta$.  All simplicial spheres (resp.~manifolds) are homology spheres (resp.~manifolds) over any field. The class of homology $2$-spheres coincides with that of simplicial $2$-spheres, and consequently, the class of homology 3-manifolds coincides with that of simplicial 3-manifolds. However, for every $d\geq 4$, there exist homology $(d-1)$-spheres that are not simplicial $(d-1)$-spheres, and for  every $d\geq 5$, there exist homology $(d-1)$-manifolds that are not simplicial $(d-1)$-manifolds.

A $(d-1)$-dimensional simplicial complex $\Delta$ is called a \textit{normal pseudomanifold} (or a normal $(d-1)$-pseudomanifold) if (i) $\Delta$ is connected and pure, (ii) every $(d-2)$-face (or \textit{ridge}) of $\Delta$ is contained in exactly two facets, and (iii) the link of each face of dimension $\leq d-3$ is connected. In particular, every connected (closed) homology manifold is a normal pseudomanifold. For $d=3$, the class of normal $(d-1)$-pseudomanifolds coincides with the class of connected homology $(d-1)$-manifolds, but for $d>3$, the former class is much larger than the latter. It is well-known and easy to check that if $\Delta$ is a normal $(d-1)$-pseudomanifold and $F$ is a face of $\Delta$ of dimension at most $d-2$, then the link of $F$ is also a normal pseudomanifold. Another useful property of normal pseudomanifolds is that their \textit{facet-ridge graphs} (also known as dual graphs) are connected. In other words, if $\Delta$ is a normal $(d-1)$-pseudomanifold and $F$, $G$ are two arbitrary facets of $\Delta$, then there exists a sequence of facets $F=F_0, F_1, \ldots, F_{p-1}, F_p=G$ of $\Delta$ with the property that every pair of consecutive elements of this sequence share a $(d-2)$-face (see, for instance, \cite[Lemma 2.1]{BagchiDatta-LBT}).

Other topological notions that will be important for this paper are the constructions of join, connected sum, and handle addition.  
Let $\Gamma$ and $\Delta$ be simplicial complexes on disjoint vertex sets.  The \textit{join} of $\Gamma$ and $\Delta$, denoted $\Gamma*\Delta$, is the simplicial complex on vertex set $V(\Gamma) \cup V(\Delta)$ whose faces are $\{\sigma \cup \tau\ : \ \sigma \in \Gamma \text{ and } \tau \in \Delta\}$.  

Let $\Gamma$ and $\Delta$ be pure simplicial complexes of the same dimension on disjoint vertex sets.  Let $F$ and $G$ be facets of $\Gamma$ and $\Delta$ respectively, and let $\varphi: F \rightarrow G$ be a bijection between the vertices of $F$ and the vertices of $G$.  The \textit{connected sum} of $\Gamma$ and $\Delta$, denoted $\Gamma \#_{\varphi} \Delta$ or simply  $\Gamma \# \Delta$, is the simplicial complex obtained by identifying the vertices of $F$ and $G$ (and all faces on those vertices) according to the bijection $\varphi$ and removing the facet corresponding to $F$ (which has been identified with $G$).  This coincides with the familiar topological construction where we have removed the relative interiors of the facets $F$ and $G$, which are open balls in $\Gamma$ and $\Delta$ respectively, and glued them together along their boundaries.  

Finally, let $\Delta$ be a pure simplicial complex of dimension $d-1$, and let $F$ and $F'$ be facets of $\Delta$ with disjoint vertex sets.  If there is a bijection $\varphi: F \rightarrow F'$ such that $v$ and $\varphi(v)$ do not have a common neighbor in $\Delta$ for every $v \in F$, the simplicial complex $\Delta^\varphi$ obtained from $\Delta$ by identifying the vertices of $F$ and $F'$ (and all faces on those vertices) and removing the facet corresponding to $F$ (which has been identified with $F'$) is called a \textit{handle addition} to $\Delta$.  The requirement that $v$ and $\varphi(v)$ do not have a common neighbor in $\Delta$ ensures that $\Delta^\varphi$ is a simplicial complex.  

The most natural combinatorial invariants of a simplicial complex are its \textit{$f$-numbers}.  If $\Delta$ is a $(d-1)$-dimensional simplicial complex, define $f_{i}(\Delta)$ to count the number of $i$-faces in $\Delta$ for any $-1 \leq i \leq d-1$.  As long as $\Delta$ is nonempty, we have $f_{-1}(\Delta) = 1$, which corresponds to the empty face in $\Delta$.  The $f$-numbers of $\Delta$ are often arranged in a single vector $f(\Delta) := (f_{-1}(\Delta), f_0(\Delta),\ldots,f_{d-1}(\Delta))$ called the \textit{$f$-vector} of $\Delta$.  

Typically it is more convenient to study a certain transformation of the $f$-numbers of a simplicial complex called its \textit{$h$-numbers}.  If $\Delta$ is a $(d-1)$-dimensional simplicial complex, we define the \textit{$h$-vector} of $\Delta$ to be the vector $h(\Delta):=(h_0(\Delta),h_1(\Delta),\ldots,h_d(\Delta))$, whose entries are given by $$h_j(\Delta) := \sum_{i=0}^j (-1)^{j-i}\binom{d-i}{d-j} f_{i-1}(\Delta).$$ The above formula can be inverted to express each $f$-number as a \textit{nonnegative} linear combination of the $h$-numbers, and hence knowing the $f$-numbers of a simplicial complex is equivalent to knowing its $h$-numbers.  Moreover, inequalities on the $h$-numbers of a simplicial complex translate directly into inequalities on the $f$-numbers (while the converse is not true).  

For this reason, it is often preferable to study $h$-numbers in favor of $f$-numbers.  Furthermore, the $h$-numbers arise naturally in the algebraic study of the Stanley-Reisner ring of a simplicial complex.  As we will not use this tool in the paper, we refer to Stanley's book \cite{Stanley-green-book} for further information.  Also, many identities involving $f$-numbers can be stated more cleanly in terms of $h$-numbers.  For example, if $\Delta$ is a homology $(d-1)$-sphere, then the Dehn-Sommerville relations \cite{Klee-DS} state that $h_j(\Delta) = h_{d-j}(\Delta)$ for all $0 \leq j \leq d$.  This means that the entire $h$-vector of a simplicial homology sphere is determined by the values of $h_0, h_1, \ldots, h_{\halffloor}$.   

As a further step, it is useful to consider the successive differences between these numbers, called the \textit{$g$-numbers}: $g_0(\Delta) := 1$ and $g_j(\Delta) := h_j(\Delta)-h_{j-1}(\Delta)$ for $1\leq j\leq d$, and to arrange the first half of these numbers in the \textit{$g$-vector} of $\Delta$,  $g(\Delta) := (g_0(\Delta),g_1(\Delta),\ldots,g_{\halffloor}(\Delta))$. 

The famous $g$-theorem of Stanley \cite{Stanley-gthm} and Billera-Lee \cite{Billera-Lee} completely characterizes the integer vectors that can arise as the $g$-vector of the boundary complex of a simplicial $d$-polytope.  

\begin{theorem}{\rm{(The $g$-theorem \cite{Billera-Lee, Stanley-gthm})}}
An integer vector $\mathbf{g} = (g_0,g_1,\ldots,g_{\halffloor})$ is the $g$-vector of a simplicial $d$-polytope if and only if $g_0 = 1$ and $\mathbf{g}$ is an $M$-vector.
\end{theorem}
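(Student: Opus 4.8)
The plan is to prove the two implications separately: \emph{necessity} (every $g$-vector of a simplicial $d$-polytope is an $M$-vector with $g_0 = 1$), which is Stanley's half, and \emph{sufficiency} (every such integer vector is realized by some simplicial $d$-polytope), which is the Billera--Lee half. For necessity, fix a simplicial $d$-polytope $P$, set $\Delta = \partial P$, a simplicial $(d-1)$-sphere, and consider the Stanley--Reisner ring $\field[\Delta] = \field[x_v : v \in V(\Delta)]/I_\Delta$, where $I_\Delta$ is generated by the squarefree monomials $\prod_{v \in \sigma} x_v$ over the minimal non-faces $\sigma$ of $\Delta$. Since a simplicial sphere is Cohen--Macaulay over any field, one can choose a linear system of parameters $\Theta = (\theta_1, \ldots, \theta_d)$, and the Artinian reduction $A(\Delta) := \field[\Delta]/(\Theta)$ is then a finite-dimensional graded $\field$-algebra generated in degree one whose Hilbert function is exactly the $h$-vector: $\dim_{\field} A(\Delta)_i = h_i(\Delta)$ for all $0 \le i \le d$. (For an infinite field a generic $\Theta$ works; a finite field is handled by base change, which leaves $h$ unchanged.)

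The crux is the \emph{hard Lefschetz property}: there is a linear form $\omega \in A(\Delta)_1$ such that $\cdot \omega^{d-2i} : A(\Delta)_i \to A(\Delta)_{d-i}$ is an isomorphism for every $0 \le i \le \halffloor$; in particular $\cdot \omega : A(\Delta)_{i-1} \to A(\Delta)_i$ is injective in that range. To obtain this, first perturb the vertices of $P$ slightly to rational points, which preserves the face lattice and hence $h(\Delta)$; so we may assume $P$ is rational and take $\field = \Q$. Then $P$ determines a projective toric variety whose cohomology ring is isomorphic to $A(\Delta)$, and the classical hard Lefschetz theorem supplies $\omega$ as the class of a generic ample divisor. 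Granting this, $A(\Delta)/(\omega)$ is again a standard graded algebra, and injectivity of $\cdot \omega$ in low degrees together with the Dehn--Sommerville symmetry $h_i = h_{d-i}$ forces its Hilbert function to be $(g_0, g_1, \ldots, g_{\halffloor}, 0, 0, \ldots)$. By Macaulay's theorem characterizing Hilbert functions of standard graded $\field$-algebras, this sequence is an $M$-vector, and $g_0 = \dim_{\field}(A(\Delta)/(\omega))_0 = 1$.

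For sufficiency, given an integer $M$-vector $\mathbf{g} = (1, g_1, \ldots, g_{\halffloor})$, I would invoke the explicit Billera--Lee construction: using the combinatorial description of the shellings of the boundary of a cyclic polytope (via the Gale evenness condition), one builds a shellable simplicial ball whose $h$-vector is prescribed by $\mathbf{g}$ and realizes it inside the boundary of an explicit simplicial $d$-polytope obtained by repeated stacking over faces of a cyclic polytope; verifying polytopality and computing the resulting $h$-vector is a careful but elementary exercise with binomial coefficients and Macaulay representations.

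The main obstacle is, beyond doubt, the hard Lefschetz property invoked in the necessity direction. The algebraic bookkeeping with Artinian reductions, Macaulay's theorem, and the Billera--Lee construction are intricate but essentially combinatorial or standard commutative algebra; by contrast, hard Lefschetz for $A(\Delta)$ genuinely requires input from outside combinatorics --- the toric-variety argument above, or later McMullen's polytope-algebra proof, or Karu's proof of hard Lefschetz for general complete fans. It is precisely the absence of a known balanced counterpart of this property that leaves the balanced analog of this theorem open.
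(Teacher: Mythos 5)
The paper does not prove this statement at all; it is quoted as classical background with citations to \cite{Stanley-gthm} and \cite{Billera-Lee}, so there is no in-paper argument to compare against. Your sketch is a correct account of the standard two-part proof: Stanley's necessity argument via the cohomology ring of the associated projective toric variety (after a rational perturbation of $P$) together with the hard Lefschetz theorem and Macaulay's characterization, and Billera--Lee's sufficiency via the shelling construction on boundaries of cyclic polytopes. You have also correctly identified hard Lefschetz as the non-combinatorial crux, which is precisely the ingredient that lacks a balanced analog and is why the authors could only \emph{conjecture} the balanced GLBC. One small imprecision worth tightening: to conclude that the Hilbert function of $A(\Delta)/(\omega)$ vanishes in degrees $> \halffloor$, injectivity of $\cdot\omega$ below middle degree plus Dehn--Sommerville symmetry alone do not suffice; you also need surjectivity of $\cdot\omega : A(\Delta)_{i-1} \to A(\Delta)_i$ for $i > \halffloor$, which does follow from hard Lefschetz by factoring $\omega^{d-2j}$ through the relevant degree, or equivalently from Poincar\'e duality plus the Lefschetz $\mathfrak{sl}_2$-action. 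A related minor point: the linear system of parameters that identifies $A(\Delta)$ with the toric cohomology ring is the one induced by the coordinate functions on the vertices, not an abstract generic one, though genericity of the vertex positions makes that specific choice an l.s.o.p.
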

\noindent In particular, the $g$-numbers of a simplicial $d$-polytope are nonnegative, and hence the $h$-vector is unimodal.  

\subsection{Balanced simplicial complexes}
In this paper we will be interested in studying the family of balanced simplicial complexes (introduced in \cite{Stanley-balanced}), which are equipped with a vertex coloring that imposes additional combinatorial structure.  

\begin{definition}
Let $\Delta$ be a $(d-1)$-dimensional simplicial complex.  We say that $\Delta$ is \textit{balanced} if the vertices of $\Delta$ can be partitioned as $V(\Delta) = V_1 \amalg V_2 \amalg \cdots \amalg V_d$ in such a way that $|F \cap V_i|\leq 1$ for all faces $F \in \Delta$. (Here, $\amalg$ denotes the disjoint union.) 
\end{definition}

We can view this vertex partition as a coloring of the vertices in which the vertices in the set $V_i$ have received color $i$.  We use the notation $[d] :=\{1,2,\ldots, d\}$ to denote the set of colors.  The condition that each face $F$ satisfies $|F \cap V_i| \leq 1$ says that no face contains two vertices of the same color or equivalently that no two vertices of the same color are connected by an edge.  Thus the vertex partition induces a proper $d$-coloring of the underlying graph of $\Delta$.  Since the graph of a $(d-1)$-simplex is a complete graph on $d$ vertices, at least $d$ colors are required to properly color the vertices of a $(d-1)$-dimensional simplicial complex.  Balanced complexes are those for which such a minimal coloring is possible.  In the literature, the family of complexes defined above is sometimes called the family of \textit{completely balanced} simplicial complexes.

The canonical example of a balanced simplicial complex is the order complex (also known as the barycentric subdivision) of any regular CW complex.  If $K$ is a regular CW complex, the \textit{order complex  $\Delta(K)$ of $K$} is the simplicial complex whose vertices are the nonempty faces of $K$ and whose faces correspond to chains of nonempty faces $\tau_0 \subsetneq \tau_1 \subsetneq \cdots \subsetneq \tau_k$ in $K$. Letting $V_i(\Delta(K))$ consist of $(i-1)$-faces of $K$ then makes $\Delta(K)$ into a balanced complex as no chain contains two faces of the same dimension. (In fact, the order complex of any graded partially ordered set is a balanced simplicial complex.)

When discussing balanced complexes, we assume that they are equipped with a fixed vertex coloring $\kappa: V(G) \rightarrow [d]$. This coloring allows us to refine the $f$- and $h$-numbers of a balanced simplicial complex.  Let $\Delta$ be a balanced simplicial complex of dimension $d-1$.  For any subset of colors $S \subseteq [d]$, define $f_S(\Delta)$ to be the number of faces in $\Delta$ for which $\kappa(F) = S$; that is, the number of faces in $\Delta$ whose vertices are colored exactly by the colors in $S$.  The numbers $f_S(\Delta)$ are called the \textit{flag $f$-numbers} of $\Delta$ and the collection $(f_S(\Delta))_{S \subseteq [d]}$ is called the \textit{flag $f$-vector} of $\Delta$.  Similarly, the \textit{flag $h$-numbers} of $\Delta$ are defined as 
\begin{equation} \label{flag-h-numbers}h_T(\Delta) = \sum_{S \subseteq T} (-1)^{|T|-|S|}f_S(\Delta) \quad \mbox{for } T\subseteq[d].\end{equation} The flag $f$-numbers refine the ordinary $f$-numbers while the flag $h$-numbers refine the ordinary $h$-numbers by the formulas $$f_{i-1}(\Delta) = \sum_{\substack{S \subseteq [d]\\ |S| = i}}f_S(\Delta) \quad \mbox{and} \quad h_j(\Delta) = \sum_{\substack{T \subseteq [d] \\ |T| = j}} h_T(\Delta).$$ Just as the $h$-numbers arise naturally in the context of the Stanley-Reisner ring of a simplicial complex, the flag $h$-numbers arise analogously for balanced simplicial complexes because the Stanley-Reisner ring can be given a refined grading according to the underlying coloring of the vertices.

\begin{example}  \label{C*d-example}
Let $\C^*_d$ denote the boundary complex of a $d$-dimensional cross-polytope.  As a simplicial complex, $\C^*_d$ has vertex set $\{u_1,u_2,\ldots,u_d,v_1,v_2,\ldots,v_d\}$ and all possible faces $F$ with the property that $|F \cap \{u_i,v_i\}| \leq 1$ for each $1 \leq i \leq d$.  In other words, $\C^*_d$ is the $d$-fold join of $\mathbb{S}^0$. Hence, $\C^*_d$ is balanced with $\kappa(u_i)=\kappa(v_i)=i$ for all $i\in [d]$, and for any $S \subseteq [d]$, an arbitrary face in $\C^*_d$ whose vertices are colored by $S$ can be described by the choice of whether it contains $u_i$ or $v_i$ for each $i \in S$.  Thus $f_S(\C^*_d) = 2^{|S|}$.  Then for any $T \subseteq [d]$, it follows from the binomial theorem that $$h_T(\C^*_d) = \sum_{S \subseteq T} (-1)^{|T|-|S|}f_S(\C^*_d) = \sum_{i=0}^{|T|}\binom{|T|}{i} (-1)^{|T|-i} 2^i = 1.$$
\end{example}

If $\Delta$ is a balanced complex and $S$ is a subset of colors, define $V_S:=\bigcup_{i\in S}V_i$ and $\Delta_S:=\Delta[V_S]$ to be the restriction of $\Delta$ to its vertices whose colors lie in the set $S$.  Also, if $\Gamma$ and $\Delta$ are balanced complexes of the same dimension, we can still define the connected sum of $\Gamma$ and $\Delta$ just as we did in the non-balanced case by adding the requirement that all vertex identifications occur between vertices of the same color; the resulting complex $\Gamma \# \Delta$ is then balanced as well. We refer to this operation as the \textit{balanced connected sum}. We define a \textit{balanced handle addition} in a similar fashion.

\subsection{Rigidity theory}
We summarize a few definitions and results from rigidity theory that will be needed for the proofs in later sections. Our presentation is mainly based on that of \cite{Kalai-rigidity}. 

Let $G=(V,E)$ be a graph. A $d$-embedding of $G$ is a map $\phi:V\rightarrow\R^d$. The distance between two $d$-embeddings $\phi$ and $\psi$ of $G$ is  $\dist(\phi,\psi):= \max_{v\in V} \dist(\phi(v),\psi(v))$, where  $\dist(-,-)$ denotes the Euclidean distance in $\R^d$. Let $\phi$ and $\psi$ be $d$-embeddings  of $G$. We say that they are \textit{isometric} if $\dist(\phi(u),\phi(v))=\dist(\psi(u),\psi(v))$ for \textit{all} $u,v\in V$. On the other hand, we say that they are \textit{$G$-isometric} if $\dist(\phi(u),\phi(v))=\dist(\psi(u),\psi(v))$ for \textit{every edge} $\{u,v\}$ of $G$. 

A $d$-embedding $\phi$ is called \textit{rigid} if there exists an $\epsilon>0$ such that every $d$-embedding $\psi$ of $G$ that is $G$-isometric to $\phi$ and satisfies $\dist(\phi,\psi)<\epsilon$ is isometric to $\phi$. Less formally, a $d$-embedding $\phi$ of $G$ is rigid if every sufficiently small perturbation of $\phi$ that preserves the lengths of the edges is induced by an isometry of $\R^d$. A graph $G$ is {\em generically $d$-rigid} if the set of rigid $d$-embeddings of $G$ is an open dense set in the set of all $d$-embeddings. 

The following result is known as the gluing lemma, see \cite[Theorem 2]{AsimowRothII} (for $d=2$) and \cite[Lemma 11.1.9]{Whiteley} (for the general case).

\begin{lemma} \label{gluing_lemma}
Let $G_1$ and $G_2$ be generically $d$-rigid graphs. If $G_1\cap G_2$ contains at least $d$ vertices, then $G_1\cup G_2$ is generically $d$-rigid.
\end{lemma}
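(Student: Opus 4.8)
The plan is to reduce this to a statement about infinitesimal rigidity and then run a short linear-algebra argument on the overlap. Recall that for a graph $G=(V,E)$ and a $d$-embedding $\phi$, an \emph{infinitesimal flex} of $(G,\phi)$ is a map $m\colon V\to\R^d$ with $\langle \phi(u)-\phi(v),\,m(u)-m(v)\rangle=0$ for every edge $\{u,v\}\in E$, and such a flex is \emph{trivial} if $m(v)=A\phi(v)+b$ for all $v\in V$, where $A\in\R^{d\times d}$ is skew-symmetric and $b\in\R^d$ (equivalently, $m$ is the velocity field of a rigid motion of $\R^d$). The key input is the Asimow--Roth theorem (see, e.g., \cite{Whiteley}): a graph is generically $d$-rigid if and only if, for one (equivalently every) generic $d$-embedding $\phi$, every infinitesimal flex of $(G,\phi)$ is trivial. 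I would also use the elementary facts that a restriction of a generic embedding is generic and that any $d$ generic points of $\R^d$ are affinely independent.

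First I would fix a generic $d$-embedding $\phi$ of $G_1\cup G_2$, set $\phi_i:=\phi|_{V(G_i)}$, and take an arbitrary infinitesimal flex $m$ of $(G_1\cup G_2,\phi)$. Since every edge of $G_i$ is an edge of $G_1\cup G_2$, the restriction $m|_{V(G_i)}$ is an infinitesimal flex of $(G_i,\phi_i)$; because $G_i$ is generically $d$-rigid and $\phi_i$ is generic, this flex is trivial. Hence there are skew-symmetric $A_1,A_2$ and vectors $b_1,b_2$ with $m(v)=A_1\phi(v)+b_1$ on $V(G_1)$ and $m(v)=A_2\phi(v)+b_2$ on $V(G_2)$. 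On $V(G_1)\cap V(G_2)$ these agree, so the affine map $L\colon x\mapsto (A_1-A_2)x+(b_1-b_2)$ vanishes on $\phi(V(G_1)\cap V(G_2))$. By hypothesis this set has at least $d$ points, and by genericity any $d$ of them are affinely independent and so span a hyperplane $H=p+W$ with $\dim W=d-1$. Vanishing of $L$ on $H$ forces its linear part $C:=A_1-A_2$ to kill $W$, and $Cp+(b_1-b_2)=0$.

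It remains to deduce $C=0$, and this is precisely where skew-symmetry and the bound ``at least $d$'' (rather than $d-1$) are used. Choosing $u\neq 0$ with $u\perp W$, for each $w\in W$ we get $\langle Cu,w\rangle=-\langle u,Cw\rangle=0$, so $Cu\in\R u$; but $\langle Cu,u\rangle=0$ by skew-symmetry, hence $Cu=0$, and since $C$ also kills $W$ we conclude $C=0$ on $\R^d=W\oplus\R u$. Then $b_1-b_2=-Cp=0$, so $m(v)=A_1\phi(v)+b_1$ for all $v\in V(G_1\cup G_2)$, i.e.\ $m$ is trivial. Thus every infinitesimal flex of the generic embedding $\phi$ is trivial, so by Asimow--Roth $G_1\cup G_2$ is generically $d$-rigid.

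The one genuinely external ingredient is the Asimow--Roth equivalence, which bridges the small-isometric-perturbation definition of generic rigidity used in the paper and the infinitesimal formulation above; I would invoke it rather than reprove it. Everything else is elementary linear algebra whose crux is the assertion that an affine self-map of $\R^d$ with skew-symmetric linear part vanishing on a hyperplane must vanish identically --- this fails for a $(d-1)$-dimensional flat, which is exactly why $d$ common vertices are needed (and why two $(d-1)$-simplices sharing a ridge need not form a generically rigid graph).
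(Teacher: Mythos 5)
The paper does not prove this lemma; it simply cites it (\cite[Theorem 2]{AsimowRothII} for $d=2$ and \cite[Lemma 11.1.9]{Whiteley} for general $d$). Your argument via infinitesimal rigidity is correct and is in fact the standard proof found in the cited sources. Each step checks out: the restriction of a generic embedding is generic; an infinitesimal flex of $(G_1\cup G_2,\phi)$ restricts to an infinitesimal flex on each $(G_i,\phi_i)$, hence is a rigid-motion velocity field $v\mapsto A_i\phi(v)+b_i$ there; the difference $L(x)=(A_1-A_2)x+(b_1-b_2)$ vanishes on $\phi(V(G_1)\cap V(G_2))$, and being affine it vanishes on the affine span of any $d$ of those points, which is generically a hyperplane $p+W$; and your linear-algebra crux --- a skew-symmetric $C$ annihilating a codimension-one subspace $W$ must be zero, because for $u\perp W$ one gets $Cu\in W^\perp=\R u$ while $\langle Cu,u\rangle=0$ --- is exactly right. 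That argument also correctly isolates why $d-1$ shared vertices would not suffice (a skew-symmetric map can vanish on a $(d-2)$-dimensional subspace without vanishing), and your two-simplices-sharing-a-ridge example is the sharp witness. The only thing you outsource, the Asimow--Roth equivalence between generic rigidity and infinitesimal rigidity at generic configurations, is the appropriate black box and is consistent with how the paper itself treats rigidity (the paper likewise invokes \cite{AsimowRothI} rather than developing the theory).
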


Let $\phi$ be a $d$-embedding of $G=(V,E)$. We say that an edge $\{u,v\}$, not in $E(G)$, \textit{depends on $G$} w.r.t.~$\phi$, if for every embedding $\psi$ which is $G$-isometric to $\phi$ and also sufficiently close to $\phi$, $\dist(\psi(u),\psi(v))=\dist(\phi(u),\phi(v))$. A graph $G$ is \textit{$d$-acyclic} if for a generic $d$-embedding of $G$ no edge $\tau$ of $G$ depends on $G-\tau:=(V,E-\{\tau\})$. We also define a \textit{stress} of $G$ w.r.t.~$\phi$ as a function $w:E\to \R$ that assigns weights to the edges of $G$ in such a way that for every vertex $v\in V$,
\[
\sum_{u \, : \{v,u\}\in E} w(\{v,u\})(\phi(v)-\phi(u)) =0.
\]
We say that $G$ is \textit{generically $d$-stress free} if  $G$ has no non-zero stresses w.r.t.~a generic $d$-embedding.

The relevance of generic rigidity to the study of face numbers, first noticed and utilized by Kalai in \cite{Kalai-rigidity}, stems from the following result, see \cite[Corollary 3]{AsimowRothI} and  \cite[Section 3]{Kalai-rigidity}.

\begin{theorem} \label{rigidity_ineq}
Let $G$ be a generically $d$-rigid graph with $n$ vertices and $e$ edges. Then $e\geq dn-\binom{d+1}{2}$. Furthermore, equality $e= dn-\binom{d+1}{2}$ is attained if and only if $G$ is generically $d$-stress free, which, in turn,  happens if and only if $G$ is $d$-acyclic.
\end{theorem}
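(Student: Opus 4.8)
The plan is to translate every notion appearing in the statement into linear algebra by means of the \emph{rigidity matrix}. Fix a generic $d$-embedding $\phi\colon V\to\R^d$ and let $R=R(\phi)$ be the $e\times dn$ matrix whose row indexed by an edge $\{u,v\}\in E$ carries the vector $\phi(u)-\phi(v)$ in the block of $d$ columns associated with $u$, the vector $\phi(v)-\phi(u)$ in the block associated with $v$, and zeros elsewhere. Two elementary observations will do most of the work. First, a map $w\colon E\to\R$ is a stress of $G$ w.r.t.~$\phi$ if and only if $w$ lies in the left null space of $R$, so the space of stresses has dimension $e-\operatorname{rank} R$; in particular $G$ is generically $d$-stress free exactly when $\operatorname{rank} R=e$. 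Second, the right null space of $R$ (the space of infinitesimal flexes of the framework) always contains the derivatives at $\phi$ of the rigid motions of $\R^d$, and when $n\ge d+1$ and $\phi$ is generic these trivial flexes span a subspace of dimension exactly $\binom{d+1}{2}$; hence $\operatorname{rank} R\le dn-\binom{d+1}{2}$.

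The only genuinely external input is the Asimow--Roth theorem (together with Gluck's observation): for a generic embedding, metric rigidity is equivalent to infinitesimal rigidity, i.e.~to the right null space of $R$ containing \emph{only} trivial flexes. Combined with the fact that $\operatorname{rank} R(\phi)$ takes a common value on a dense open set of $d$-embeddings, this says precisely that $G$ is generically $d$-rigid if and only if that generic rank attains its maximum $dn-\binom{d+1}{2}$. (When $n\le d$ the only generically $d$-rigid graph is the complete one, for which $e=\binom{n}{2}=dn-\binom{d+1}{2}+\binom{d-n+1}{2}\ge dn-\binom{d+1}{2}$, so the inequality holds trivially in that range.) Therefore, if $G$ is generically $d$-rigid then $e\ge\operatorname{rank} R=dn-\binom{d+1}{2}$, which is the claimed bound.

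For the equality statement, note that $e=dn-\binom{d+1}{2}=\operatorname{rank} R$ holds if and only if the $e$ rows of $R$ are linearly independent, which by the first observation above is exactly the assertion that $G$ admits no nonzero stress w.r.t.~$\phi$, i.e.~that $G$ is generically $d$-stress free. Finally, by the definition given above, an edge $\tau\in E$ depends on $G-\tau$ w.r.t.~$\phi$ precisely when the $\tau$-row of $R$ lies in the span of the remaining rows; so ``no edge $\tau$ of $G$ depends on $G-\tau$'' is once more equivalent to the rows of $R$ being linearly independent, i.e.~to $G$ being $d$-acyclic. Hence the three conditions coincide, and all are equivalent to the equality $e=dn-\binom{d+1}{2}$.

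The main obstacle is the input quoted in the second paragraph: establishing that generic (metric) rigidity coincides with infinitesimal rigidity requires the Asimow--Roth machinery — one must show that rigidity is a generic property (constant on a dense open subset of $d$-embeddings, via real-analytic/semialgebraic arguments on the edge-length map) and that at such embeddings the first-order behaviour of that map already detects rigidity, so that the generic rank of $R(\phi)$ is a well-defined invariant of $G$ equal to its maximum. Granting this, the remainder of the argument is routine linear algebra applied to $R(\phi)$.
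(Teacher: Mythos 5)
The paper does not prove Theorem~\ref{rigidity_ineq} at all; it simply cites it from Asimow--Roth and Kalai. Your rigidity-matrix argument is a correct outline of precisely the standard proof found in those references: the left null space of $R(\phi)$ gives stresses, the right null space gives infinitesimal flexes with a $\binom{d+1}{2}$-dimensional trivial subspace, Asimow--Roth converts generic metric rigidity into the rank condition $\operatorname{rank} R = dn-\binom{d+1}{2}$, and row-independence of $R$ simultaneously encodes stress-freeness, $d$-acyclicity, and the equality case.
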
 

\begin{corollary} \label{rigidity-corollary}
Let $\Delta$ be a $(d-1)$-dimensional simplicial complex.  If the graph of $\Delta$ is generically $d$-rigid, then $h_2(\Delta) \geq h_1(\Delta)$.  Moreover, equality $h_2(\Delta) = h_1(\Delta)$ holds if and only if the graph of $\Delta$ is generically $d$-stress free.
\end{corollary}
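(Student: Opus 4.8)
The plan is to reduce the statement to a direct application of Theorem~\ref{rigidity_ineq} after translating the inequality $h_2(\Delta)\geq h_1(\Delta)$ into the language of vertices and edges. Write $n=f_0(\Delta)$ for the number of vertices and $e=f_1(\Delta)$ for the number of edges of $\Delta$; these are exactly the number of vertices and edges of the graph $G$ of $\Delta$. First I would compute $h_1$ and $h_2$ from the defining formula $h_j(\Delta)=\sum_{i=0}^j(-1)^{j-i}\binom{d-i}{d-j}f_{i-1}(\Delta)$, using $f_{-1}(\Delta)=1$. This gives $h_1(\Delta)=f_0(\Delta)-d=n-d$ and $h_2(\Delta)=f_1(\Delta)-(d-1)f_0(\Delta)+\binom{d}{2}=e-(d-1)n+\binom{d}{2}$.

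Next I would subtract these two expressions and simplify. We obtain
\[
h_2(\Delta)-h_1(\Delta)=e-(d-1)n+\binom{d}{2}-(n-d)=e-dn+\binom{d}{2}+d=e-dn+\binom{d+1}{2},
\]
where the last equality uses $\binom{d}{2}+d=\binom{d+1}{2}$. Thus $h_2(\Delta)\geq h_1(\Delta)$ is equivalent to $e\geq dn-\binom{d+1}{2}$, and $h_2(\Delta)=h_1(\Delta)$ is equivalent to $e=dn-\binom{d+1}{2}$.

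Finally, since $G$ is assumed to be generically $d$-rigid, Theorem~\ref{rigidity_ineq} applies and yields $e\geq dn-\binom{d+1}{2}$, proving $h_2(\Delta)\geq h_1(\Delta)$. The ``furthermore'' clause of Theorem~\ref{rigidity_ineq} says that equality $e=dn-\binom{d+1}{2}$ holds if and only if $G$ is generically $d$-stress free; combined with the equivalence from the previous paragraph, this gives the equality characterization $h_2(\Delta)=h_1(\Delta)\iff G$ is generically $d$-stress free.

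There is really no serious obstacle here: the argument is a short $f$-to-$h$ bookkeeping computation followed by a citation of the rigidity inequality. The only point requiring minimal care is the arithmetic identity $\binom{d}{2}+d=\binom{d+1}{2}$ and keeping track of the $f_{-1}=1$ term when evaluating $h_1$ and $h_2$, so that the constant $\binom{d+1}{2}$ comes out exactly matching the one in Theorem~\ref{rigidity_ineq}.
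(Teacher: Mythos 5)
Your proof is correct and is essentially the same as the paper's: the paper simply records the identity $h_2(\Delta)-h_1(\Delta)=f_1(\Delta)-df_0(\Delta)+\binom{d+1}{2}$ and cites Theorem~\ref{rigidity_ineq}, which is exactly the computation and reduction you carry out in more detail.
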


\begin{proof}
The proof is immediate as $h_2(\Delta) - h_1(\Delta) = f_1(\Delta) - df_0(\Delta) + {d+1 \choose 2}$.
\end{proof}


For the rest of the paper we will say that $\Delta$ is generically $d$-rigid or $d$-acyclic or $d$-stress free if the graph of $\Delta$ has the corresponding property.

\section{Lower bound theorems for balanced pseudomanifolds}

\subsection{History} 
Walkup \cite{Walkup} (in dimensions $3$ and $4$) and Barnette \cite{Barnette-LBT} (in all dimensions) showed that the boundary of a stacked $(d-1)$-sphere on $n$ vertices has the componentwise minimal $f$-vector among all simplicial $(d-1)$-manifolds with $n$ vertices.  An alternative proof of this result, as well as the treatment of equality cases, was given by Kalai \cite{Kalai-rigidity}. This result was later generalized by Fogelsanger \cite{Fogelsanger} and Tay \cite{Tay} to all normal pseudomanifolds.  

A stacked $(d-1)$-sphere on $n$ vertices, denoted $\ST(n,d-1)$, is defined inductively as follows.  The minimal number of vertices that a simplicial $(d-1)$-sphere can have is $d+1$, which is achieved by the boundary complex of a $d$-simplex.  Therefore, $\ST(d+1,d-1)$ is defined to be the boundary of a $d$-simplex.  Inductively, for $n>d+1$, $\ST(n,d-1)$ is defined as the connected sum of $\ST(n-1,d-1)$ with the boundary of a $d$-simplex.  Thus, $\ST(n,d-1)$ is obtained by taking the $(n-d)$-fold connected sum of the boundary of a $d$-simplex with itself.  By choosing an appropriate embedding, it is possible to realize $\ST(n,d-1)$ as the boundary of a simplicial $d$-polytope.  

The Lower Bound Theorem states that if $\Delta$ is a normal pseudomanifold of dimension $d-1\geq 2$ with $n$ vertices, then $f_i(\Delta) \geq f_i(\ST(n,d))$ for all $0 \leq i \leq d-1$.  Somewhat surprisingly, via the McMullen--Perles--Walkup reduction (see \cite{Barnette-LBT, McMullen-Walkup}), the Lower Bound Theorem can be equivalently stated as the following elegant inequality on the level of $h$-numbers.

\begin{theorem} \label{LBT} {\rm{(Lower Bound Theorem \cite{Barnette-LBT, Fogelsanger, Kalai-rigidity, Tay, Walkup}})}
Let $\Delta$ be a normal pseudomanifold of dimension $d-1\geq 2$. Then $h_2(\Delta) \geq h_1(\Delta)$. Furthermore, if $d\geq 4$, then equality holds if and only if $\Delta$ is a stacked sphere. 
\end{theorem}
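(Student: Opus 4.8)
\medskip\noindent\textit{Proof strategy.}\
The plan is to reduce both claims to the rigidity machinery of Section~2.3, in the spirit of Kalai \cite{Kalai-rigidity} and its extension to pseudomanifolds by Fogelsanger \cite{Fogelsanger} and Tay \cite{Tay}. By Corollary~\ref{rigidity-corollary}, for a $(d-1)$-dimensional complex the inequality $h_2\ge h_1$ is equivalent to generic $d$-rigidity of its graph, and equality $h_2=h_1$ is equivalent to generic $d$-stress-freeness. Hence it suffices to prove: (a) the graph of every normal $(d-1)$-pseudomanifold with $d-1\ge2$ is generically $d$-rigid; and (b) for $d\ge4$, a generically $d$-stress-free normal $(d-1)$-pseudomanifold is a stacked sphere, while conversely every stacked $(d-1)$-sphere is generically $d$-stress-free.

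For (a) I would induct on $d$. The base case $d=3$ --- that the graph of a connected closed surface is generically $3$-rigid --- is the low-dimensional heart of Fogelsanger's argument \cite{Fogelsanger} (for the $2$-sphere it already follows from rigidity of convex polytopes). For the inductive step, let $\Delta$ be a normal $(d-1)$-pseudomanifold with $d\ge4$. For each vertex $v$, the link $\lk_\Delta(v)$ is a normal $(d-2)$-pseudomanifold, so by the inductive hypothesis its graph is generically $(d-1)$-rigid; since coning a generically $(d-1)$-rigid graph yields a generically $d$-rigid graph \cite{Whiteley}, the graph of $\Star_\Delta(v)=v*\lk_\Delta(v)$ is generically $d$-rigid. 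Now the graph of $\Delta$ is $\bigcup_v G(\Star_\Delta(v))$, and since it is connected we may order the vertices $v_1,\dots,v_n$ so that every $v_k$ with $k\ge2$ is adjacent to some $v_j$ with $j<k$. Gluing the stars in this order, at the $k$-th step $G(\Star_\Delta(v_k))$ meets $\bigcup_{i<k}G(\Star_\Delta(v_i))$ in a vertex set containing $\{v_j,v_k\}\cup V(\lk_\Delta(\{v_j,v_k\}))$; as the link of an edge in $\Delta$ is a normal $(d-3)$-pseudomanifold it has at least $d-1$ vertices, so this overlap has at least $d+1\ge d$ vertices, and Lemma~\ref{gluing_lemma} shows every partial union, and hence the graph of $\Delta$, is generically $d$-rigid.

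For (b), fix $d\ge4$. The converse is a short face count: since $h_j(\Gamma\#\Gamma')=h_j(\Gamma)+h_j(\Gamma')$ for $1\le j\le d-1$ and $h(\partial\Delta^d)=(1,\dots,1)$, one gets $h_j(\ST(n,d-1))=n-d$ for $1\le j\le d-1$, so in particular $h_1=h_2$ and the graph is $d$-stress-free. For the main direction, suppose $\Delta$ is a generically $d$-stress-free normal $(d-1)$-pseudomanifold; I claim $\Delta$ is a stacked sphere, arguing by induction on $d$. First, stress-freeness passes to vertex links: $\Star_\Delta(v)$ is a subcomplex of $\Delta$, hence generically $d$-stress-free (extend a stress by $0$), and by Whiteley's coning theorem \cite{Whiteley} the cone $v*\lk_\Delta(v)$ is generically $d$-stress-free iff $\lk_\Delta(v)$ is generically $(d-1)$-stress-free; thus every $\lk_\Delta(v)$ is generically $(d-1)$-stress-free. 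If $d=4$, each $\lk_\Delta(v)$ is a generically $3$-stress-free normal $2$-pseudomanifold, hence a $2$-sphere (by Corollary~\ref{rigidity-corollary} and $g_2(\Gamma)=6-3\chi(\Gamma)$ for a closed surface $\Gamma$), so $\Delta$ is a $3$-manifold with $g_2(\Delta)=0$ and is a stacked $3$-sphere by Walkup's theorem \cite{Walkup}. If $d\ge5$, the inductive hypothesis makes each $\lk_\Delta(v)$ a stacked $(d-2)$-sphere; by the structure theorems of Walkup \cite{Walkup} and Kalai \cite{Kalai-rigidity}, a normal $(d-1)$-pseudomanifold (with $d\ge5$) all of whose vertex links are stacked spheres lies in the Walkup class $\mathcal H^d$ --- the smallest family containing $\partial\Delta^d$ and closed under connected sums and handle additions. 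A direct face count shows that a connected sum leaves $g_2$ unchanged while each handle addition raises $g_2$ by $\binom{d+1}{2}>0$; hence $g_2(\Delta)=h_2(\Delta)-h_1(\Delta)=0$ forces $\Delta$ to be built with no handle additions, i.e.\ $\Delta$ is a connected sum of copies of $\partial\Delta^d$, and therefore a stacked sphere.

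The step I expect to be the main obstacle is the base case of (a): showing that the graph of an \emph{arbitrary} triangulated closed surface --- not merely a sphere --- is generically $3$-rigid. This is exactly the input supplied by Fogelsanger's thesis \cite{Fogelsanger} and reproved by Tay \cite{Tay}; granting it, the inductive gluing in (a) and the link/coning argument in (b) are routine. A secondary point that must be imported rather than derived from the material above is the structure theorem of Walkup and Kalai identifying normal pseudomanifolds with all vertex links stacked as members of $\mathcal H^d$ (for $d\ge5$), together with Walkup's direct treatment of the equality case in dimension $3$.
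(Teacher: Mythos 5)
The paper gives no proof of this theorem; it is stated as cited background (Barnette, Fogelsanger, Kalai, Tay, Walkup), so there is no internal argument to compare against. Your reconstruction is sound and follows the standard rigidity-theoretic route of Kalai, with Fogelsanger's theorem supplying the $d=3$ base case. The coning-plus-gluing induction in part (a) is exactly Kalai's argument, and your overlap estimate via edge links is the right one: a normal $(d-3)$-pseudomanifold has at least $d-1$ vertices, so the intersection $\{v_j,v_k\}\cup V(\lk_\Delta(\{v_j,v_k\}))$ has at least $d+1$ vertices, enough for Lemma~\ref{gluing_lemma}. The passage of stress-freeness to vertex links via Whiteley's coning theorem is the standard reduction in part (b), the surface computation $g_2(\Gamma)=6-3\chi(\Gamma)$ is correct, and the $h$-number calculation for stacked spheres (and the $g_2$-additivity/handle formula) checks out. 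The two external imports you flag are genuine and unavoidable: Fogelsanger's generic $3$-rigidity of all triangulated closed surfaces for the base case, and the Walkup--Kalai structure theorem characterizing complexes whose vertex links are all stacked, for the inductive step of the equality characterization. One small point worth making explicit: Kalai's original structure theorem is phrased for simplicial manifolds, not normal pseudomanifolds, but since all vertex links are stacked spheres (hence PL spheres) the complex is automatically a PL manifold, so the theorem applies; alternatively one can invoke the Bagchi--Datta extension to normal pseudomanifolds that the paper itself cites.
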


As an extension of this result, Goff, Klee, and Novik \cite{Goff-Klee-Novik} defined the family of stacked cross-polytopal spheres to give a lower bound theorem for balanced spheres.  Just as the boundary of a $d$-simplex has the minimal $h$-numbers (and hence the minimal $f$-numbers) among all simplicial $(d-1)$-spheres, it follows easily from the results in \cite{Stanley-balanced} that the boundary of a $d$-dimensional cross-polytope has the minimal (flag) $h$-numbers among all balanced simplicial $(d-1)$-spheres.  For any pair of integers $n$ and $d$ with $n$ divisible by $d$, a \textit{stacked cross-polytopal $(d-1)$-sphere on $n$ vertices}, denoted $\ST^{\times}(n,d-1)$, is defined as the balanced connected sum of $\frac{n}{d}-1$ copies of $\C^*_d$ with itself.  

Goff, Klee, and Novik \cite{Goff-Klee-Novik} (for simplicial spheres and more generally for doubly Cohen-Macaulay complexes) and Browder and Klee \cite{Browder-Klee} (for simplicial manifolds and more generally for Buchsbaum$^*$ simplicial complexes) showed that when $n$ is divisible by $d$, $\ST^{\times}(n,d-1)$ has the componentwise minimal $f$-vector among all balanced spheres/manifolds of dimension $d-1$ with $n$ vertices.  Once again, this can be stated as a simple inequality on the level of $h$-numbers.  

\begin{theorem} \label{balancedLBT-manifolds} {\rm{(Balanced Lower Bound Theorem \cite{Goff-Klee-Novik, Browder-Klee})}}
Let $\Delta$ be a balanced connected simplicial manifold of dimension $d-1 \geq 2$. Then  $2h_2(\Delta) \geq (d-1)h_1(\Delta)$.  
\end{theorem}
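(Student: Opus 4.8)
\proof
The plan is to rewrite the asserted inequality as a statement about ordinary $f$-numbers, and then to reduce it, via a summation over triples of colors, to the generic $3$-rigidity of the $3$-colored subcomplexes of $\Delta$. Writing $n=f_0(\Delta)$, one has $h_1(\Delta)=n-d$ and $h_2(\Delta)=f_1(\Delta)-(d-1)n+\binom{d}{2}$, so a short computation from the definitions gives
\[
2h_2(\Delta)-(d-1)h_1(\Delta)=2f_1(\Delta)-3(d-1)\,n+2d(d-1);
\]
hence the theorem is equivalent to the inequality $f_1(\Delta)\ge \tfrac{3(d-1)}{2}\,n-d(d-1)$. For $d=3$ this is immediate, since a connected closed surface satisfies $3f_2(\Delta)=2f_1(\Delta)$ and hence $f_1(\Delta)=3f_0(\Delta)-3\chi(\Delta)\ge 3f_0(\Delta)-6$; so from now on assume $d\ge 4$.

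The key step would be the claim that \emph{for every $3$-element subset $S\subseteq[d]$, the graph of the subcomplex $\Delta_S$ is generically $3$-rigid}. Granting this, Theorem~\ref{rigidity_ineq} yields $f_1(\Delta_S)\ge 3f_0(\Delta_S)-\binom{4}{2}=3f_0(\Delta_S)-6$ for each such $S$, and I would then sum this over all $\binom{d}{3}$ triples. Since each vertex of $\Delta$ carries a single color, it lies in exactly $\binom{d-1}{2}$ of the complexes $\Delta_S$, and since each edge carries a $2$-element set of colors, it lies in exactly $d-2$ of them; thus $\sum_S f_0(\Delta_S)=\binom{d-1}{2}\,n$ and $\sum_S f_1(\Delta_S)=(d-2)\,f_1(\Delta)$. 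Summing the rigidity inequalities therefore gives
\[
(d-2)\,f_1(\Delta)\ \ge\ 3\binom{d-1}{2}\,n-6\binom{d}{3}\ =\ \frac{3(d-1)(d-2)}{2}\,n-d(d-1)(d-2),
\]
and dividing by $d-2>0$ yields exactly $f_1(\Delta)\ge \tfrac{3(d-1)}{2}\,n-d(d-1)$, as required. (As a sanity check, every inequality above becomes an equality when $\Delta=\ST^{\times}(n,d-1)$, since then each $\Delta_S$ is a triangulated $2$-sphere.)

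It remains to establish the claim, and this is the substantive part of the argument. The route I would take is: a connected closed manifold is a Buchsbaum${}^{*}$ complex — one may work over $\field=\Z/2$, which costs nothing as the $h$-numbers are field-independent; the color-selected subcomplex $\Delta_S$ of a balanced Buchsbaum${}^{*}$ complex is again Buchsbaum${}^{*}$, now of dimension $|S|-1=2$; and the graph of a Buchsbaum${}^{*}$ complex of dimension at least $2$ is generically rigid in the appropriate dimension, hence generically $3$-rigid in the $2$-dimensional case. These last two points are precisely the content underlying \cite{Browder-Klee} (for balanced spheres one instead replaces ``Buchsbaum${}^{*}$'' by ``doubly Cohen--Macaulay'' and appeals to the corresponding results of \cite{Goff-Klee-Novik}). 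The main obstacle is exactly this rigidity input — showing that the Buchsbaum${}^{*}$ property is inherited by color-selected subcomplexes, and that $2$-dimensional Buchsbaum${}^{*}$ complexes have generically $3$-rigid graphs — rather than the elementary bookkeeping that surrounds it.
\endproof
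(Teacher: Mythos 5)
Your proof is correct, and your overall strategy --- reducing $2h_2\ge(d-1)h_1$ to generic $3$-rigidity of each color-restriction $\Delta_S$, then summing the rigidity inequality $f_1(\Delta_S)\ge 3f_0(\Delta_S)-6$ over all $\binom{d}{3}$ triples using the incidence counts $\sum_S f_0(\Delta_S)=\binom{d-1}{2}f_0(\Delta)$ and $\sum_S f_1(\Delta_S)=(d-2)f_1(\Delta)$ --- is exactly the bookkeeping that the paper uses (compare eq.~(\ref{g2_as_a_sum})). Where you diverge is in sourcing the rigidity of $\Delta_S$. You defer to the Buchsbaum$^*$ machinery underlying \cite{Browder-Klee}: a connected closed manifold is Buchsbaum$^*$ over $\Z/2$, rank-selected subcomplexes of balanced Buchsbaum$^*$ complexes are again Buchsbaum$^*$, and $2$-dimensional Buchsbaum$^*$ complexes have generically $3$-rigid graphs. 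That is indeed the route of the cited papers, and it is correct, but it is tied to the Buchsbaum$^*$ (or, in \cite{Goff-Klee-Novik}, the doubly Cohen--Macaulay) hypothesis. The present paper instead proves the rigidity of $\Delta_S$ directly as Lemma~\ref{3-rigidity}, by induction on $d$: the base case is Fogelsanger's theorem that normal $2$-pseudomanifolds are generically $3$-rigid, and the inductive step assembles $\Delta_S$ from the restrictions of vertex links via the gluing lemma (Lemma~\ref{gluing_lemma}), ordering the facets of $\Delta_{[d]-S}$ by distance in the facet--ridge graph so that consecutive links overlap in a facet of $\Delta_S$. What the paper's argument buys is generality: Lemma~\ref{3-rigidity} only needs $\Delta$ to be a normal pseudomanifold, which is precisely why the paper can upgrade Theorem~\ref{balancedLBT-manifolds} to Theorem~\ref{balancedLBT} for all balanced normal pseudomanifolds, a class strictly larger than that of connected closed manifolds (where the Buchsbaum$^*$ route no longer applies). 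Your proposal correctly reproves the theorem as stated, but not its generalization; it is worth noticing that your appeal to the Buchsbaum$^*$ black box is exactly the ingredient the paper replaces with the more elementary and more flexible Lemma~\ref{3-rigidity}.
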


The $h$-number inequality in the balanced Lower Bound Theorem motivates the following definition of the balanced $g$-numbers of a balanced simplicial complex. 

\begin{definition}
Let $\Delta$ be a balanced simplicial complex of dimension $d-1$.  For any $1 \leq j \leq d$, define the \textit{balanced $g$-number} $$\overline{g}_j(\Delta):=j\cdot h_j(\Delta) - (d-j+1)\cdot h_{j-1}(\Delta).$$  
\end{definition}

Just as the classic Lower Bound Theorem states that $g_2(\Delta) \geq 0$ for a normal pseudomanifold, the balanced Lower Bound Theorem states that $\overline{g}_2(\Delta) \geq 0$ for a balanced connected simplicial manifold. Note also that if $j=2$ and $d=3$, then $\overline{g}_2(\Delta)=2g_2(\Delta)$, and so in this case, the inequality $\overline{g}_2\geq 0$ of Theorem \ref{balancedLBT-manifolds} is equivalent to the inequality $g_2\geq 0$ of Theorem \ref{LBT}.  

\subsection{The balanced LBT}
Our first goal in this section is to use rigidity theory to show that the balanced Lower Bound Theorem continues to hold for balanced normal pseudomanifolds.  

\begin{theorem}  \label{balancedLBT}
Let $\Delta$ be a balanced, normal $(d-1)$-pseudomanifold with $d \geq 3$.  Then $\overline{g}_2(\Delta) \geq 0$.  
\end{theorem}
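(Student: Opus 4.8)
### Proof Proposal

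The plan is to reduce the inequality $\overline{g}_2(\Delta) \geq 0$ to a rigidity-theoretic statement about the graphs of the color-restricted subcomplexes $\Delta_S$, and then to establish the needed rigidity by induction on the dimension $d-1$, using the normal pseudomanifold structure. The starting observation is a combinatorial identity expressing $\overline{g}_2(\Delta)$ in terms of the quantities $\overline{g}_2$ (equivalently, $h_2 - h_1$, up to the factor 2 when $d=3$) of the $(d-1)$-choose-2 two-color-pair restrictions $\Delta_{\{i,j\}}$, together with contributions from the one-color pieces. Concretely, using $h_j(\Delta) = \sum_{|T|=j} h_T(\Delta)$ and the flag $h$-number definition \eqref{flag-h-numbers}, one writes $d \cdot h_2(\Delta) - (d-1)\cdot h_1(\Delta)$ as a sum over pairs $\{i,j\}$ of local terms; each local term should turn out to be exactly $f_1(\Delta_{\{i,j\}}) - f_0(\Delta_{\{i,j\}})$ plus lower-order corrections, i.e.\ something controlled by the rigidity inequality $h_2 - h_1 \geq 0$ of Corollary \ref{rigidity-corollary} applied in ambient dimension $2$ to each $\Delta_{\{i,j\}}$. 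So the first step is to nail down this identity: $\overline{g}_2(\Delta)$ is a nonnegative combination of the numbers $h_2(\Delta_{\{i,j\}}) - h_1(\Delta_{\{i,j\}})$ over all $2$-subsets $\{i,j\} \subseteq [d]$.

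Granting that reduction, it suffices to show that each $\Delta_{\{i,j\}}$ has a generically $2$-rigid graph, since then Corollary \ref{rigidity-corollary} (with $d$ there equal to $2$) gives $h_2(\Delta_{\{i,j\}}) \geq h_1(\Delta_{\{i,j\}})$ and we are done. This is where the normal pseudomanifold hypothesis and an induction on $d$ enter. The base case $d=3$ is essentially Theorem \ref{LBT}: for a normal $2$-pseudomanifold, $h_2 \geq h_1$, and one checks directly that the graph of a connected $2$-pseudomanifold (or more simply, that each $\Delta_{\{i,j\}}$ here is connected with minimum degree at least $2$, hence contains enough structure) is generically $2$-rigid. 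For the inductive step, fix a pair of colors $\{i,j\}$ and consider the subcomplex $\Delta_{\{i,j\}}$. The key point is that for any vertex $v$ of color, say, $i$, the link $\lk_\Delta(v)$ is a balanced normal $(d-2)$-pseudomanifold colored by $[d]\setminus\{i\}$, so by the inductive hypothesis (in dimension $d-2$) together with the gluing lemma (Lemma \ref{gluing_lemma}) it has a generically $(d-1)$-rigid graph; restricting that link to the colors $\{j\}$ — wait, that is too small, so instead one restricts to a color set containing $\{i,j\}$ and uses a cone/gluing argument. The cleanest route: prove by induction that for a balanced normal $(d-1)$-pseudomanifold $\Delta$ and \emph{any} color set $S$ with $|S| = s$, the graph of $\Delta_S$ is generically $s$-rigid; the case $s=2$ is what we need. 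One glues the vertex stars $\Star_\Delta(v)_S$ (over vertices $v\in V_S$) along their shared faces using Lemma \ref{gluing_lemma}, and the star of $v$ restricted to $S$ is a cone over $(\lk_\Delta v)_{S\setminus\{\kappa(v)\}}$, which is generically $(s-1)$-rigid by induction, so coning makes it generically $s$-rigid; connectivity of the facet-ridge graph of $\Delta$ (noted in Section 2) ensures the gluing can be carried out so that consecutive pieces share at least $s$ vertices.

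The main obstacle, and the step requiring the most care, is the gluing/connectivity bookkeeping in the inductive step: one must verify that consecutive color-restricted vertex stars overlap in at least $s$ vertices so that Lemma \ref{gluing_lemma} applies, and that the overlaps stay within the color set $S$. This is exactly where condition (iii) in the definition of normal pseudomanifold (connectivity of links of low-dimensional faces) does the work, guaranteeing that $(\lk_\Delta v)_{S\setminus\{\kappa(v)\}}$ is itself a normal pseudomanifold of the right dimension and is connected, so the induction closes. A secondary technical point is the combinatorial identity in the first step: it is a finite computation with binomial coefficients and the flag $h$-number formula, but one must be careful to get the coefficient $d$ versus $d-1$ right and to check that all correction terms are genuinely nonnegative (this is where the factor mismatch between $\overline{g}_2$ and $2g_2$ in low dimensions, flagged after Theorem \ref{balancedLBT-manifolds}, must be tracked). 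Once both pieces are in place, $\overline{g}_2(\Delta) = \sum_{\{i,j\}} c_{ij}\bigl(h_2(\Delta_{\{i,j\}}) - h_1(\Delta_{\{i,j\}})\bigr) \geq 0$ with $c_{ij} \geq 0$, completing the proof.
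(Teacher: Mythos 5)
Your overall strategy — decompose $\overline{g}_2(\Delta)$ as a nonnegative combination of $g_2$'s of color-restricted subcomplexes and then apply rigidity via Corollary~\ref{rigidity-corollary} — is exactly the paper's strategy. But you chose the wrong size of color set, and the size matters in two independent ways.

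First, the identity you want does not hold over $2$-element color sets. Since $h_T(\Delta_S)=h_T(\Delta)$ for $T\subseteq S$, summing over $2$-subsets gives
\[
\sum_{|S|=2} h_2(\Delta_S)=h_2(\Delta),\qquad \sum_{|S|=2} h_1(\Delta_S)=(d-1)\,h_1(\Delta),
\]
so
\[
\sum_{|S|=2}\bigl(h_2(\Delta_S)-h_1(\Delta_S)\bigr)=h_2(\Delta)-(d-1)h_1(\Delta)=\overline{g}_2(\Delta)-h_2(\Delta),
\]
which is \emph{not} a nonnegative multiple of $\overline{g}_2(\Delta)$. Indeed, if each $g_2(\Delta_{\{i,j\}})$ were $\ge 0$ you would conclude $h_2(\Delta)\ge (d-1)h_1(\Delta)$, which is false already for $\Delta=\C^*_d$ (there $h_1=d$, $h_2=\binom{d}{2}$).

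Second, and independently, your proposed rigidity lemma is false in the case you need: for $\Delta=\C^*_d$ the restriction $\Delta_{\{i,j\}}$ is a $4$-cycle (four vertices, four edges), and $4<2\cdot 4-3$, so it is not generically $2$-rigid. So the inductive statement ``$\Delta_S$ is generically $s$-rigid for $|S|=s$'' fails at $s=2$, precisely the case your reduction demands. (It also fails at $s=1$ trivially, and is not needed for $s\ge 4$.) The paper avoids both problems by working with $3$-element color sets: one then gets the clean identity $\overline{g}_2(\Delta)=\tfrac{2}{d-2}\sum_{|S|=3}g_2(\Delta_S)$ (for $d>3$; and $\overline{g}_2=2g_2$ when $d=3$), and the corresponding rigidity statement — each $\Delta_S$ with $|S|=3$ is generically $3$-rigid — is true. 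The paper proves it (Lemma~\ref{3-rigidity}) by writing $\Delta_S=\bigcup_{F}\lk_\Delta(F)$ over $(d-4)$-faces $F$ of $\Delta_{[d]-S}$, invoking Fogelsanger's theorem that normal $2$-pseudomanifolds are generically $3$-rigid, and gluing along an ordering of the facets of $\Delta_{[d]-S}$ induced by facet-ridge graph distance so that consecutive links overlap in a facet of $\Delta_S$. Your star/cone gluing sketch is a different bookkeeping, but once you replace $2$-subsets by $3$-subsets it should be repairable along similar lines; as written, however, the argument is not salvageable because both the identity and the rigidity claim are wrong at $s=2$.
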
  

The structure of the proof of Theorem \ref{balancedLBT} is similar to that of \cite[Theorem 5.3]{Goff-Klee-Novik} and \cite[Theorem 4.1]{Browder-Klee}. The main new ingredient is the following result that might be of interest in its own right.

\begin{lemma} \label{3-rigidity}
Let $\Delta$ be a balanced, normal $(d-1)$-pseudomanifold with $d \geq 3$, and let $S$ be a $3$-element subset of $[d]$. Then $\Delta_S$ is generically $3$-rigid. 
\end{lemma}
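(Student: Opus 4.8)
The plan is to induct on $d$ and use the gluing lemma (Lemma~\ref{gluing_lemma}) to build up generic $3$-rigidity of $\Delta_S$ from the generic $3$-rigidity of vertex links. The base case $d=3$ is immediate: here $S=[3]$, so $\Delta_S=\Delta$ is the graph of a normal $2$-pseudomanifold, which is a connected simplicial $2$-manifold (as noted in the preliminaries), hence a connected planar-like surface triangulation, and such graphs are classically known to be generically $3$-rigid (this is Fogelsanger's theorem, or Gluck's theorem in the sphere case, and holds for all $2$-pseudomanifolds). For the inductive step, assume $d\geq 4$ and that the statement holds for all balanced normal pseudomanifolds of dimension $< d-1$; fix a $3$-subset $S\subseteq[d]$ and pick any color $i\in[d]\setminus S$ (possible since $d\geq 4$). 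I would then consider the vertices of color $i$: for each vertex $v\in V_i$, the link $\lk_\Delta(v)$ is a balanced normal $(d-2)$-pseudomanifold whose color set is $[d]\setminus\{i\}\supseteq S$, so by the inductive hypothesis $(\lk_\Delta v)_S$ is generically $3$-rigid. The key point is that $\Delta_S$ is covered by these subgraphs together with some ``base'' subgraph.

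More precisely, here is the covering I would set up. Note first that $\Delta_{S\cup\{i\}} = \Delta_S * \text{(nothing)} $ is not quite a join, but every facet of $\Delta_{S\cup\{i\}}$ either lies in $\Delta_S$ or contains exactly one vertex of color $i$; the $(d-2)$-pseudomanifold structure is not directly available on $\Delta_{S\cup\{i\}}$, so instead I would argue directly on the graph level. Order the vertices of color $i$ as $v_1,\dots,v_m$. For each $j$, the graph $G_j := (\lk_\Delta v_j)_S$ lives on the $S$-colored neighbors of $v_j$ and is generically $3$-rigid with at least $3$ vertices (it is the graph of a $(d-2)\geq 2$ dimensional normal pseudomanifold, so it has at least $d-1\geq 3$ vertices, indeed its vertex set restricted to $S$ has at least $3$ vertices because a normal pseudomanifold uses all its colors). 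I claim $\bigcup_j G_j$ together covers the graph of $\Delta_S$: any edge $\{a,b\}$ of $\Delta_S$ extends to a facet $F$ of $\Delta$, and since $\Delta$ is pure of dimension $d-1\geq 3$ and uses all $d$ colors on $F$, the facet $F$ contains a vertex of color $i$, say $v_j$, and then $\{a,b\}\subseteq \lk_\Delta v_j$ with both $a,b$ colored in $S$, so $\{a,b\}\in G_j$. Thus the graph of $\Delta_S$ equals $\bigcup_{j=1}^m G_j$, each piece generically $3$-rigid.

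To glue, I need consecutive (in a suitable order) pieces to share at least $3$ vertices; this is where the connectivity hypotheses on normal pseudomanifolds enter and is the main obstacle. I would use the fact that the dual (facet-ridge) graph of $\Delta$ is connected, together with the connectivity of links of low-dimensional faces. Concretely: the ``color-$i$ dual graph'' whose nodes are vertices of color $i$ and where $v_j \sim v_k$ when some ridge of $\Delta$ is shared by a facet containing $v_j$ and a facet containing $v_k$ — one shows this is connected by walking along the facet-ridge graph of $\Delta$ and tracking which color-$i$ vertex sits in each facet (when a ridge is crossed, either the color-$i$ vertex stays the same or it changes to an adjacent one). When $v_j$ and $v_k$ are adjacent in this sense, their common ridge, intersected with the $S$-colored vertices, gives a face of size $|S\cap(\text{ridge})|$; here I must check this intersection has at least $3$ vertices. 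A ridge is a $(d-2)$-face, missing exactly one color; if that missing color is in $S$ the intersection has $2$ vertices, which is not enough. To handle this I would instead glue along $G_j\cap G_k\supseteq (\lk_\Delta \sigma)_S$ where $\sigma$ runs over codimension-$2$ faces, using that $\lk_\Delta\sigma$ for a $(d-3)$-face $\sigma$ is a connected normal $1$-pseudomanifold (a cycle) — more robustly, I would pick a facet $F_j$ and $F_k$ sharing a ridge and note $F_j\cap F_k$ restricted to colors $S\cup\{i\}$ minus color $i$ has at least $|S|-1=2$ vertices, then enlarge the overlap by first gluing each $G_j$ to the fixed rigid graph $G_{j_0}$ along a $3$-vertex common subface obtained because $\Delta_{S}$ restricted to any single facet is a $2$-simplex (three vertices, all colors of $S$) and these $2$-simplices are connected through the dual graph with triangles overlapping in edges... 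Since edge-overlap gives only $2$ shared vertices, the cleanest fix is: prove by a separate short lemma that $\Delta_S$ is \emph{connected in codimension $1$} in the sense that any two $S$-colored triangles are joined by a sequence of $S$-colored triangles, consecutive ones sharing an edge, \emph{and} that the union of all $S$-colored triangles incident to any fixed vertex is already generically $3$-rigid (a ``wheel''), so that we can glue wheels sharing edges plus a common apex, i.e. sharing $3$ vertices. I expect the bulk of the write-up to be this combinatorial bookkeeping establishing the right $\geq 3$-vertex overlaps; once that is in place, repeated application of Lemma~\ref{gluing_lemma} finishes the proof.
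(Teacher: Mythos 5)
Your decomposition is genuinely different from the paper's and, once the confusion in the middle of your write-up is cleared away, it does work. The paper avoids induction entirely in the step $d>3$: it writes $\Delta_S=\bigcup_F\lk_\Delta(F)$ where $F$ ranges over the facets of $\Delta_{[d]-S}$ (equivalently, over the $(d-3)$-faces of $\Delta$ colored by $[d]-S$); each such link is a normal $2$-pseudomanifold, hence generically $3$-rigid directly by Fogelsanger, and the gluing order is extracted from a breadth-first ordering of the facets of $\Delta$ by distance in the facet-ridge graph. You instead cover the graph of $\Delta_S$ by the pieces $G_j=(\lk_\Delta v_j)_S$ over the color-$i$ vertices $v_j$ (for a fixed $i\notin S$), which are generically $3$-rigid by the inductive hypothesis on the balanced normal $(d-2)$-pseudomanifolds $\lk_\Delta(v_j)$. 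Both proofs are ultimately gluing arguments driven by connectivity of the facet-ridge graph; yours has fewer pieces but needs the induction, the paper's has more pieces but needs only the $2$-dimensional Fogelsanger input.

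The obstacle you worry about at length --- that the shared ridge might miss a color in $S$ and hence give only a $2$-vertex overlap --- is not actually there. If $v_j\neq v_k$ are distinct color-$i$ vertices and $F_j\ni v_j$, $F_k\ni v_k$ are facets sharing a ridge $R=F_j\cap F_k$, then $R$ cannot contain a color-$i$ vertex: a ridge has at most one vertex of each color, and such a vertex would lie in both $F_j$ and $F_k$, forcing $v_j=v_k$. So the missing color of $R$ is exactly $i$, and $R\cap V_S$ is a full triangle of $\Delta_S$ lying in both $G_j$ and $G_k$. Your own observation that ``when a ridge is crossed, either the color-$i$ vertex stays the same or changes to an adjacent one'' already contains this fact; the case where it changes is precisely the case where the ridge misses color $i$. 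With that, your color-$i$ dual graph is connected (walk in the facet-ridge graph of $\Delta$, which is connected for normal pseudomanifolds), any spanning-tree / BFS ordering of $v_1,\dots,v_m$ puts each $G_j$ against an earlier $G_k$ along a $3$-vertex overlap, and Lemma~\ref{gluing_lemma} finishes the argument. The later detours in your proposal (wheels, codimension-$1$ connectivity of $\Delta_S$, enlarging overlaps via intermediate faces) are all unnecessary; the clean route through the color-$i$ dual graph that you sketched first is sufficient.
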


\begin{proof}
The proof is by induction on $d$. For $d=3$, $\Delta_S=\Delta$, and the lemma follows from the main result of \cite{Fogelsanger} asserting that  any normal 2-pseudomanifold is generically 3-rigid.  Thus assume that $d>3$. Since $\Delta$ is pure,
\[
\Delta_S= \bigcup_{\substack{F\in\Delta_{[d]-S}\\ |F| = d-3}} \lk_\Delta(F).
\]
In addition, since $\Delta$ is a normal pseudomanifold, each link in the above union is a normal 2-pseudomanifold, and hence is generically 3-rigid by the Fogelsanger's result. Thus by the gluing lemma (Lemma \ref{gluing_lemma}), to complete the proof it only remains to show that we can order the facets of $\Delta_{[d]-S}$ as $F_0, F_1, \ldots, F_s$ in such a way that  
\begin{equation}  \label{order}
\lk_\Delta(F_i)\bigcap \left(\bigcup_{j<i} \lk_\Delta(F_j)\right) \mbox{ contains a facet of }\Delta_S \quad \forall 1\leq i \leq s.
\end{equation}

To do so, pick any facet $H$ of $\Delta$. Since $\Delta$ is a normal pseudomanifold, every facet $G$ of $\Delta$ has a finite distance to $H$, denoted $\dist(H,G)$, in the facet-ridge graph of $\Delta$. (For instance, the distance of $H$ to itself is $0$; the distance of $H$ to every facet that shares a $(d-2)$-face with $H$ is $1$, etc.). Order the facets of $\Delta$ as $H=H_0, H_1,\ldots, H_r$ so that $\dist(H,H_k)\leq \dist(H,H_{\ell})$ for all $1\leq k\leq \ell \leq r$. Since $\Delta$ is pure, each facet of $\Delta_{[d]-S}$ is the intersection of some facet of $\Delta$ with $V_{[d]-S}$. Hence the above ordering induces an ordering on the facets of $\Delta_{[d]-S}$: simply consider the list $H_0\cap V_{[d]-S}, H_1\cap V_{[d]-S},\ldots, H_r\cap V_{[d]-S}$, and for each element $F$ in this list, delete all the occurrences of $F$ except the very first one.

We claim that this ordering satisfies condition (\ref{order}). Indeed, let $F_i$, $i\geq 1$, be
a facet of $\Delta_{[d]-S}$,  let $p$ be the least index such that $F_i=H_p\cap  V_{[d]-S}$, and let $t=\dist(H,H_p)$. Since the facet-ridge graph of $\Delta$ is connected, there exists $0\leq p'<p$ such that $\dist(H, H_{p'})=t-1$ and $\dist(H_p, H_{p'})=1$ (or, equivalently, $|H_p\cap H_{p'}|=d-1$). Since $p$ is minimal, $H_{p'}\cap V_{[d]-S} \neq F_i$, and so $H_{p'}\cap V_{[d]-S}=F_j$ for some $j<i$. As $H_p$ and $H_{p'}$ differ in only one element, we conclude that $H_p\cap V_S=H_{p'}\cap V_S$. This intersection is therefore a facet of $\Delta_S$ contained in $\lk_\Delta(F_i)\cap (\cup_{j<i} \lk_\Delta(F_j))$. 
\end{proof}

We are now ready to complete the proof of Theorem \ref{balancedLBT}.

\smallskip\noindent\textit{Proof of Theorem \ref{balancedLBT}: \ }
It was observed in the proof of \cite[Theorem 5.3]{Goff-Klee-Novik} that
 $(d-2)h_2(\Delta)=\sum_{|S|=3} h_2(\Delta_S)$ while  $\binom{d-1}{2}h_1(\Delta)=\sum_{|S|=3} h_1(\Delta_S)$. Hence
\begin{equation}  \label{g2_as_a_sum}
\overline{g}_2(\Delta)=\frac{2}{d-2} \sum_{|S|=3} (h_2(\Delta_S)-h_1(\Delta_S))=
\frac{2}{d-2}\sum_{|S|=3} g_2(\Delta_S).
\end{equation}
By Lemma \ref{3-rigidity},  $\Delta_S$ (for $|S|=3$) is generically $3$-rigid. Thus, by Corollary \ref{rigidity-corollary}, the last sum in (\ref{g2_as_a_sum}) is the sum of non-negative numbers, and so it is nonnegative. The result follows. \hfill $\square$

\subsection{Inequalities involving  the first Betti number}

For connected orientable homology manifolds (i.e., manifolds with non-vanishing top homology), the Lower Bound Theorem can be strengthened as follows.

\begin{theorem} \label{thm:Novik-Swartz-lower-bound} {\rm{\cite[Theorem 5.2]{Novik-Swartz-buchsbaum}}}
Let $\Delta$ be a connected simplicial complex  of dimension $d-1 \geq 3$. If $\Delta$ is a $\field$-homology manifold that is orientable over $\field$, then $g_2(\Delta)\geq \binom{d+1}{2} \beta_1(\Delta;\field)$.  
\end{theorem}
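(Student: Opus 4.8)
I would prove this by reconstructing the argument of Novik and Swartz; it leaves the purely combinatorial framework of the rest of the paper and uses the Stanley--Reisner ring $A=\field[\Delta]$. Fix a generic linear system of parameters $\Theta=(\theta_1,\dots,\theta_d)$ for $A$ and a further generic linear form $\omega$, and set $\field(\Delta)=A/\Theta A$.

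\emph{Step 1 (rigidity, to pin down $g_2$).} A connected homology $(d-1)$-manifold with $d-1\geq 3$ is a normal pseudomanifold, so its graph is generically $d$-rigid by Fogelsanger's theorem (exactly as invoked in Lemma~\ref{3-rigidity}). In the standard algebraic form of generic $d$-rigidity this says that $\cdot\,\omega\colon\field(\Delta)_1\to\field(\Delta)_2$ is injective, and since $\dim_{\field}\field(\Delta)_1=h_1(\Delta)$ and $\dim_{\field}\field(\Delta)_2=h_2(\Delta)$ we get
\[
\dim_{\field}\bigl(A/(\Theta,\omega)\bigr)_2 \;=\; h_2(\Delta)-h_1(\Delta) \;=\; g_2(\Delta).
\]
It therefore suffices to find a subspace of $\bigl(A/(\Theta,\omega)\bigr)_2$ of dimension $\binom{d+1}{2}\beta_1(\Delta;\field)$, and the target number factors as $\binom{d+1}{2}=\binom{d}{2}+d$.

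\emph{Step 2 (the two homology contributions).} Homology manifolds are Buchsbaum complexes, since their vertex links are homology spheres and hence Cohen--Macaulay; by Schenzel's description of the Artinian reduction of a Buchsbaum complex, the failure of $\field(\Delta)$ to be Cohen--Macaulay is controlled by the local cohomology modules $H^j_{\mathfrak m}(A)\cong\widetilde H^{j-1}(\Delta;\field)$. This produces in $\field(\Delta)$ a socle subspace in degree $2$ of dimension $\binom{d}{2}\beta_1(\Delta;\field)$ (coming from $\widetilde H^{1}(\Delta;\field)$); being annihilated by $\mathfrak m$, and in particular by $\omega$, it injects into $\bigl(A/(\Theta,\omega)\bigr)_2$. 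Here orientability over $\field$ enters: it gives Poincar\'e duality $\beta_{d-2}(\Delta;\field)=\beta_1(\Delta;\field)$, and, by the Novik--Swartz structure theorem, it makes the top of $\field(\Delta)$ behave Gorenstein-like --- there is an ideal $J$ with $\dim_{\field}J_j=\binom{d}{j}\beta_{j-1}(\Delta;\field)$ for $0<j<d$ and $\field(\Delta)/J$ Artinian Gorenstein with socle in degree $d$. The degree-$(d-1)$ part of this structure, of dimension $d\,\beta_{d-2}=d\,\beta_1$, is then transported down to degree $2$ --- via the Gorenstein pairing of $\field(\Delta)/J$ and multiplication by the extra form $\omega$, with the generic $d$-rigidity of $\Delta$ supplying the needed Weak-Lefschetz-type maximal rank --- yielding $d\,\beta_1$ further classes in $\bigl(A/(\Theta,\omega)\bigr)_2$ that are independent of the $\binom{d}{2}\beta_1$ found above. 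Together these give $g_2(\Delta)\geq\binom{d}{2}\beta_1+d\,\beta_1=\binom{d+1}{2}\beta_1(\Delta;\field)$.

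\emph{Main obstacle.} The crux is the second half of Step 2: showing precisely how the degree-$(d-1)$ Buchsbaum/Gorenstein data of $\field(\Delta)$ descends, after adjoining $\omega$, to honest new classes in degree $2$, and verifying that the sum with the degree-$2$ socle is direct (even the ``routine'' injectivity of the degree-$2$ socle into $A/(\Theta,\omega)$ relies on the genericity of $\omega$ and should be checked, e.g.\ via the already available bound $g_2\geq\binom{d}{2}\beta_1$ coming from the $h''$-vector). This bookkeeping is essentially the entire content of the Novik--Swartz structure theory for face rings of orientable homology manifolds; it is also where $d-1\geq 3$ is genuinely used, since for $d-1=2$ the degrees $2$ and $d-1$ coincide, there is nothing to fold down, and the inequality collapses to the plain Lower Bound Theorem $g_2\geq0$. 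A more geometric route, suggested by the coincidence that $\binom{d+1}{2}$ is the dimension of the space of infinitesimal rigid motions of $\R^{d}$, would be to assign to each $1$-cocycle of $\Delta$ and each infinitesimal isometry of $\R^{d}$ an explicit self-stress of the graph of $\Delta$ and prove these self-stresses independent directly; but carrying that out seems no less delicate than the algebraic argument.
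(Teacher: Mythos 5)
This statement is not proved in the paper; it is imported directly from \cite[Theorem~5.2]{Novik-Swartz-buchsbaum}, so there is no internal argument for you to match against. Your outline does correctly name the ingredients of the Novik--Swartz machinery: Fogelsanger rigidity recast as injectivity of $\omega\colon(A/\Theta A)_1\to(A/\Theta A)_2$ (so that $\dim_{\field}(A/(\Theta,\omega))_2=g_2$, using $h'_1=h_1$ and $h'_2=h_2$ for a connected complex), Schenzel's formula and the Novik--Swartz socle theorem for Buchsbaum face rings, the Gorenstein quotient available when $\Delta$ is $\field$-orientable, and Poincar\'e duality $\beta_{d-2}(\Delta;\field)=\beta_1(\Delta;\field)$.

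It does not, however, constitute a proof, and the gap is precisely the one you flag as the ``main obstacle.'' Two concrete problems remain. First, the socle piece $N_2\subseteq(A/\Theta A)_2$ being annihilated by the maximal ideal only tells you that $\omega N_2=0$ in degree $3$; it does not by itself give $N_2\cap\omega(A/\Theta A)_1=0$, which is what is actually needed for $N_2$ to inject into $(A/(\Theta,\omega))_2$. Second, the ``additional $d\beta_1$ classes'' are asserted to descend from degree $d-1$ via the Gorenstein pairing and multiplication by $\omega$, but no map or subspace is produced and no independence from the $N_2$ contribution modulo $\omega(A/\Theta A)_1$ is checked. To see what is genuinely at stake, set $h'_j=\dim_{\field}(A/\Theta A)_j$ and $h''_j=h'_j-\binom{d}{j}\beta_{j-1}$: for connected $\Delta$ one has $g_2=h''_2-h''_1+\binom{d}{2}\beta_1$, Gorenstein symmetry gives $h''_1=h''_{d-1}$, and Schenzel together with Poincar\'e duality gives $h''_{d-1}=h'_{d-1}-d\beta_1$, so the claimed inequality is equivalent to $h''_2\geq h'_{d-1}$ --- a comparison between the \emph{reduced} Hilbert function in degree $2$ and the \emph{unreduced} one in degree $d-1$. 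Your sketch names no map or mechanism that establishes this comparison, and that is the actual content of Novik and Swartz's theorem; as written, the proposal is a road map of their proof rather than a proof.
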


\begin{remark}
During the time that this paper was under review, Murai \cite{Murai-15} proved Theorem \ref{thm:Novik-Swartz-lower-bound} for all normal pseudomanifolds.  For this reason, it is natural to pose the following Conjecture \ref{b1_conj} and Conjecture \ref{b1_conj_rev} for balanced normal pseudomanifolds as well.
\end{remark}


Is there a balanced analog of Theorem \ref{thm:Novik-Swartz-lower-bound}? We conjecture that the following holds.

\begin{conjecture} \label{b1_conj}
Let $\Delta$ be a balanced connected simplicial complex  of dimension $d-1 \geq 3$. If $\Delta$ is a $\field$-homology manifold that is orientable over $\field$, then $\overline{g}_2(\Delta) \geq 4\binom{d}{2} \beta_1(\Delta;\field)$.
\end{conjecture}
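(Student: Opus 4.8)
The plan is to follow the strategy of the proof of Theorem~\ref{balancedLBT} and pass to the color restrictions $\Delta_S$ with $|S|=3$. Starting from the identity $\overline{g}_2(\Delta)=\frac{2}{d-2}\sum_{|S|=3}g_2(\Delta_S)$ used there, together with the elementary computation $\binom{d}{3}\cdot\frac{2}{d-2}=\frac{d(d-1)}{3}=\frac{2}{3}\binom{d}{2}$, one checks that the asserted inequality $\overline{g}_2(\Delta)\ge 4\binom{d}{2}\beta_1(\Delta;\field)$ is equivalent to
\[
\sum_{|S|=3}g_2(\Delta_S)\ \ge\ 6\binom{d}{3}\,\beta_1(\Delta;\field).
\]
Since there are $\binom{d}{3}$ summands, it would suffice to show that on average $g_2(\Delta_S)\ge 6\,\beta_1(\Delta;\field)=\binom{|S|+1}{2}\,\beta_1(\Delta;\field)$; note this is exactly what Theorem~\ref{thm:Novik-Swartz-lower-bound} would give for $\Delta_S$ with ``$d=3$'' \emph{if} $\Delta_S$ were an orientable homology manifold and $\beta_1(\Delta_S;\field)=\beta_1(\Delta;\field)$. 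So there are two things to establish: a Betti-refined lower bound for the $\Delta_S$, and a comparison of their first Betti numbers with that of $\Delta$.

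For the Betti comparison, I would delete the colors of $[d]\setminus S$ from $\Delta$ one at a time. At each stage one passes from a complex $\Gamma$ to its restriction $\Gamma'$ obtained by removing one color class, and $\Gamma$ is obtained from $\Gamma'$ by attaching, for each deleted-color vertex $v$, a cone over $\lk_\Gamma(v)$ along $\lk_\Gamma(v)\subseteq\Gamma'$ (distinct deleted-color vertices share no face, so these cones meet only inside $\Gamma'$). Consequently $\Gamma/\Gamma'$ is homotopy equivalent to the wedge of suspensions $\bigvee_v\Sigma\lk_\Gamma(v)$, and the long exact sequence of the pair $(\Gamma,\Gamma')$ yields a surjection $\widetilde H_1(\Gamma';\field)\twoheadrightarrow\widetilde H_1(\Gamma;\field)$ as soon as every $\lk_\Gamma(v)$ is connected --- a mild condition on color restrictions of homology spheres that should follow from normality. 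Iterating gives $\beta_1(\Delta_S;\field)\ge\beta_1(\Delta;\field)$ for every $3$-subset $S$ (and equality whenever these links additionally have vanishing $\widetilde H_1$, which is automatic for $d=4$, where the relevant links are homology $2$-spheres).

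The crux is then a Betti-refined lower bound for the $\Delta_S$ themselves: one would like $g_2(\Delta_S)\ge 6\,\beta_1(\Delta_S;\field)$, since with the previous paragraph this closes the argument. As in Lemma~\ref{3-rigidity}, $\Delta_S=\bigcup_F\lk_\Delta(F)$ over the $(d-4)$-faces $F$ colored by $[d]\setminus S$, a union of $\field$-homology $2$-spheres, each generically $3$-rigid and generically $3$-stress-free (Fogelsanger); by Corollary~\ref{rigidity-corollary}, $g_2(\Delta_S)$ equals the dimension of the space of generic $3$-stresses of the graph of $\Delta_S$, and every such stress arises from the combinatorics of how the $2$-spheres overlap. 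The idea would be to produce a $6\beta_1$-dimensional space of such stresses by an inclusion--exclusion argument over the nerve of this cover, tracking the homology of $\Delta_S$ simultaneously; the constant $6$, rather than the value $3$ valid for a genuine orientable surface (where $g_2=3\beta_1$), must come precisely from the fact that $\Delta_S$ is \emph{not} a manifold, so that the (rigid, many-vertex-sharing) gluing loci are forced by the gluing lemma to support extra stresses.

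The difficulty lies exactly here, and this is what keeps the statement a conjecture. First, it is unclear that the excess stresses of such a union of homology $2$-spheres scale like $6\,\beta_1$ rather than like the homology of the nerve, and overlaps can create $\widetilde H_2$-classes whose stress contribution must be disentangled from that of $\widetilde H_1$; moreover the per-restriction bound $g_2(\Delta_S)\ge 6\,\beta_1(\Delta_S;\field)$ cannot hold verbatim when $\Delta_S$ happens to be an orientable surface of positive genus, so genuine averaging over the $3$-subsets, or a more global argument, seems unavoidable. For this reason I would keep an algebraic route in reserve, in the spirit of \cite{Novik-Swartz-buchsbaum}: take a color-generic Artinian reduction $A=\field[\Delta]/\Theta$, where $\Theta$ is a colored linear system of parameters (available because $\Delta$ is balanced); establish a balanced Schenzel-type formula expressing $\dim_\field A_2$ through the flag $h$-numbers of $\Delta$ and $\beta_1(\Delta;\field)$; and exploit the extra socle structure of $A$ available when $\Delta$ is a $\field$-orientable homology manifold (refining the socle computations of \cite{Novik-Swartz-buchsbaum}) to bound $\overline g_2(\Delta)=2\dim_\field A_2-(d-1)\dim_\field A_1$ directly, with the generic $3$-rigidity of the $\Delta_S$ from Lemma~\ref{3-rigidity} re-entering to control a generic Lefschetz-type multiplication $A_1\to A_2$ in the refined grading. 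Carrying out either argument in the $\Z^d$-graded setting is, I expect, the main obstacle.
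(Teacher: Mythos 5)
The statement is labeled a conjecture in the paper and is \emph{not} proved there; you are right that it remains open. The paper's only evidence for it is Theorem~\ref{t-covering}, which proceeds by a route you do not touch: assuming $\beta_1(\Delta;\Q)\neq 0$, one lifts the triangulation to a connected $t$-sheeted cover $\Delta^t$ (which is again balanced and a normal pseudomanifold), applies the balanced LBT of Theorem~\ref{balancedLBT} to $\Delta^t$, and uses Swartz's formula $h_i(\Delta^t)=t\,h_i(\Delta)+(-1)^{i-1}(t-1)\binom{d}{i}$ for $i=1,2$ to translate back, giving $\overline{g}_2(\Delta)\ge 4\frac{t-1}{t}\binom{d}{2}$, hence $\overline{g}_2(\Delta)\ge 4\binom{d}{2}$ upon letting $t\to\infty$. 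Notice that this covering-space bound never exceeds $4\binom{d}{2}$ no matter how large $t$ is; it certifies the conjectured inequality only in the sense of the $\beta_1=1$ case and sees nothing of the $\beta_1$-scaling. So the paper itself falls short of the statement, which is why it is posed as Conjecture~\ref{b1_conj} rather than as a theorem.

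Your reduction is arithmetically correct: via $\overline{g}_2(\Delta)=\frac{2}{d-2}\sum_{|S|=3}g_2(\Delta_S)$ the conjecture is equivalent to $\sum_{|S|=3}g_2(\Delta_S)\ge 6\binom{d}{3}\beta_1(\Delta;\field)$, i.e., an average of $6\beta_1$ per three-color restriction. Your Betti comparison is also sound: deleting one color class $V_c$ from $\Gamma$ to form $\Gamma'$, the closed stars $v*\lk_\Gamma(v)$, $v\in V_c$, meet $\Gamma'$ exactly in $\lk_\Gamma(v)$ and meet each other only inside $\Gamma'$ (balancedness forbids two color-$c$ vertices in a face), so $H_1(\Gamma,\Gamma';\field)\cong\bigoplus_v\widetilde H_0(\lk_\Gamma(v);\field)$, and the connectivity of these color-restricted links of a normal pseudomanifold (which does follow from connectivity of the facet-ridge graph, as you surmise) forces $H_1(\Gamma';\field)\twoheadrightarrow H_1(\Gamma;\field)$, hence $\beta_1(\Delta_S;\field)\ge\beta_1(\Delta;\field)$ after iterating.

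You also correctly locate the gap. The per-restriction inequality $g_2(\Delta_S)\ge 6\,\beta_1(\Delta_S;\field)$ cannot be a theorem about arbitrary normal $2$-pseudomanifolds (an orientable triangulated surface has $g_2=3\beta_1$), so any rigidity-theoretic proof must exploit the specific way the homology $2$-spheres $\lk_\Delta(F)$ overlap inside $\Delta_S$, or must average nontrivially over $S$; making the ``extra stresses from the singular gluing locus'' heuristic precise, and separating the nerve's $\widetilde H_1$ from its $\widetilde H_2$, is exactly the unsolved problem. Your alternative algebraic sketch (colored l.s.o.p., Schenzel-type formula, socle) is the natural balanced analog of the Novik--Swartz machinery behind Theorem~\ref{thm:Novik-Swartz-lower-bound}, and is probably the more promising direction, but is likewise left at the level of a program. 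In short, both of your routes are reasonable attacks, both are honestly incomplete, and neither is the covering-space argument that the paper actually records as partial evidence --- which itself does not reach the full conjectured bound.
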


The constant $4\binom{d}{2}$ is explained by the following result that provides a partial evidence to Conjecture \ref{b1_conj}. (Note that this result holds in the generality of normal pseudomanifolds.)

\begin{theorem}  \label{t-covering}
Let $\Delta$ be a balanced normal pseudomanifold of dimension $d-1\geq 2$. If $\|\Delta\|$ has a connected $t$-sheeted covering space, then $\overline{g}_2(\Delta) \geq 4\frac{t-1}{t}\binom{d}{2}$. In particular, if $\beta_1(\Delta;\Q) \neq 0$, then $\overline{g}_2(\Delta) \geq 4\binom{d}{2}$.
\end{theorem}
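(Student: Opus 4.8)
The plan is to transfer the inequality $\overline{g}_2 \geq 0$ of Theorem \ref{balancedLBT} up to a covering complex. Let $p : \tilde{X} \to \|\Delta\|$ be a connected $t$-sheeted covering space. The first step is to build a \emph{finite} simplicial complex $\tilde{\Delta}$ with $\|\tilde{\Delta}\| = \tilde{X}$ for which $p$ is simplicial and restricts to an isomorphism onto its image on every simplex: since each simplex $\sigma$ of $\Delta$ is simply connected, $p^{-1}(\|\sigma\|)$ is a disjoint union of $t$ copies of $\|\sigma\|$, each mapped homeomorphically by $p$, and gluing these copies along the face relations of $\Delta$ yields such a $\tilde{\Delta}$ (this is the standard pullback triangulation of a covering space of a CW complex). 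Pulling back the vertex coloring $\kappa$ along $p$ makes $\tilde{\Delta}$ balanced, since $p$ is injective on every edge. Moreover $\tilde{\Delta}$ is again a normal $(d-1)$-pseudomanifold: it is pure and connected, and $p$ induces isomorphisms $\lk_{\tilde{\Delta}}(\tilde{F}) \cong \lk_{\Delta}(p(\tilde{F}))$ for every face $\tilde{F}$ (because $p$ restricts to a homeomorphism on the closed star of $\tilde{F}$), so the ``two facets per ridge'' condition and the connectivity of the links of faces of dimension $\leq d-3$ are inherited from $\Delta$. Finally, each $i$-face of $\Delta$ has exactly $t$ preimages, so $f_i(\tilde{\Delta}) = t\, f_i(\Delta)$ for all $i \geq 0$.

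The second step is bookkeeping on $h$-numbers. Plugging $f_{-1}(\tilde{\Delta}) = 1$ together with $f_{i-1}(\tilde{\Delta}) = t\, f_{i-1}(\Delta)$ for $i \geq 1$ into the defining formula for the $h$-vector gives $h_j(\tilde{\Delta}) = t\, h_j(\Delta) - (t-1)(-1)^j\binom{d}{j}$ for all $j$; in particular $h_1(\tilde{\Delta}) = t\, h_1(\Delta) + (t-1)d$ and $h_2(\tilde{\Delta}) = t\, h_2(\Delta) - (t-1)\binom{d}{2}$. Substituting these into $\overline{g}_2 = 2h_2 - (d-1)h_1$ and using $(d-1)d = 2\binom{d}{2}$, the cross terms collapse to the identity $\overline{g}_2(\tilde{\Delta}) = t\,\overline{g}_2(\Delta) - 4(t-1)\binom{d}{2}$.

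The third step applies Theorem \ref{balancedLBT} to the balanced normal $(d-1)$-pseudomanifold $\tilde{\Delta}$ (legitimate since $d \geq 3$), which gives $\overline{g}_2(\tilde{\Delta}) \geq 0$; rearranging the identity above yields $\overline{g}_2(\Delta) \geq 4\tfrac{t-1}{t}\binom{d}{2}$, the asserted bound. For the last sentence of the theorem, if $\beta_1(\Delta;\Q) \neq 0$ then $H_1(\Delta;\Z)$ has positive free rank, so $\pi_1(\|\Delta\|)$ surjects onto $\Z$ and hence onto $\Z/t\Z$ for every $t \geq 2$; the covering associated to an index-$t$ subgroup is a connected $t$-sheeted covering, so letting $t \to \infty$ in the bound just proved gives $\overline{g}_2(\Delta) \geq 4\binom{d}{2}$.

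The only point requiring genuine care is the verification in the first step that the pullback triangulation $\tilde{\Delta}$ is a normal pseudomanifold — i.e.\ that purity, the ridge condition, and the connectivity of low-dimensional links are inherited — but since $p$ is a simplicial local homeomorphism that is injective on stars, each of these transfers directly from $\Delta$, so I do not anticipate a real obstacle there. The substance of the argument is thus the elementary identity $\overline{g}_2(\tilde{\Delta}) = t\,\overline{g}_2(\Delta) - 4(t-1)\binom{d}{2}$ combined with an appeal to Theorem \ref{balancedLBT}.
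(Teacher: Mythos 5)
Your proof is correct and follows essentially the same route as the paper: lift $\Delta$ to the pullback triangulation of the $t$-sheeted cover, check it is balanced and a normal pseudomanifold, compute $\overline{g}_2(\tilde\Delta)=t\,\overline{g}_2(\Delta)-4(t-1)\binom{d}{2}$, and invoke Theorem~\ref{balancedLBT}. The only cosmetic difference is that the paper cites Swartz's formula (\cite[Proposition~4.2]{Swartz}) for $h_i(\Delta^t)$ rather than rederiving it from $f_i(\tilde\Delta)=t f_i(\Delta)$ as you do, and it argues the normal-pseudomanifold property via relative homology rather than via evenly covered stars.
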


\begin{proof}
The proof follows the same ideas as the proof of \cite[Theorem 4,3]{Swartz}. Let $X:=\|\Delta\|$ and let $X^t$ be a connected $t$-sheeted covering space of $X$. Then  the triangulation $\Delta$ of $X$ lifts to a triangulation $\Delta^t$ of $X^t$. The balancedness of $\Delta$ implies that $\Delta^t$ is balanced as well (just color each vertex $v$ of $\Delta^t$ in the color of the image of $v$ in $\Delta$). Similarly, our assumption that $\Delta$ is a normal $(d-1)$-pseudomanifold implies that $\Delta^t$ is also a normal $(d-1)$-pseudomanifold. (For instance, to see the connectivity of links of $\Delta^t$, observe that if $F^t$ is a face of $\Delta^t$ whose image is $F\in\Delta$ and $p^t$ is a point in the interior of $F^t$ whose image is $p$, then $\widetilde{H_0}(\lk_{\Delta^t}(F^t))\cong H_{|F|}(X^t, X^t-p^t)\cong H_{|F|}(X, X-p)\cong \widetilde{H_0}(\lk_{\Delta}(F))$.) 

To finish the proof, apply Theorem \ref{balancedLBT} to $\Delta^t$ to obtain that 
\begin{equation} \label{LBT-for-Delta^t}
\overline{g}_2(\Delta^t)=2h_2(\Delta^t) - (d-1)h_1(\Delta^t)\geq 0.
\end{equation}
On the other hand, according to \cite[Proposition 4.2]{Swartz}, 
\begin{equation} \label{h_i(Delta^t)}
h_i(\Delta^t)=th_i(\Delta)+(-1)^{i-1}(t-1)\binom{d}{i} \quad \mbox{for } i=1,2.
\end{equation}
Substituting eq.~(\ref{h_i(Delta^t)}) in eq.~(\ref{LBT-for-Delta^t}) gives
\begin{eqnarray*}
\overline{g}_2(\Delta^t) &=& 2h_2(\Delta^t) - (d-1) h_1(\Delta^t) \\
&=& 2\left( t h_2(\Delta)-(t-1){d \choose 2}\right) - (d-1) \left(t h_1(\Delta) + (t-1){d \choose 1}\right) \\
&=& t\left(2h_2(\Delta)-(d-1)h_1(\Delta)\right) - 4(t-1){d \choose 2}\\
&=& t\overline{g}_2(\Delta)- 4(t-1){d \choose 2}.
\end{eqnarray*}
The claim that $\overline{g}_2(\Delta) \geq 4 \frac{t-1}{t}{d \choose 2}$ follows from the fact that $\overline{g}_2(\Delta^t) \geq 0$.This inequality together with the well-known fact that if  $\beta_1(\Delta;\Q) \neq 0$, then $\|\Delta\|$ has a connected $t$-sheeted covering space for arbitrarily large $t$ implies the ``in particular" part. \end{proof} 


\section{Extremal cases and the balanced Walkup class}

The main goal of this section is to establish a balanced analog of the second part of the Lower Bound Theorem concerning the cases of equality. 

\begin{theorem} \label{balancedLBT-equality}
Let $\Delta$ be a balanced normal $(d-1)$-pseudomanifold with $d\geq 4$. Then $\overline{g}_2(\Delta)=0$ holds if and only if $\Delta$ is a stacked cross-polytopal sphere.
\end{theorem}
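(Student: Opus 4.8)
The plan is to mimic Kalai's treatment of the equality case of the classical Lower Bound Theorem (as described in the excerpt around Theorem \ref{LBT}), carrying the balanced structure along at each step. The ``if'' direction should be routine: one computes directly that $\C^*_d$ has $\overline{g}_2=0$ (indeed all flag $h$-numbers of $\C^*_d$ equal $1$ by Example \ref{C*d-example}, so $h_1=h_2=\binom d1,\binom d2$ wait---rather $h_j(\C^*_d)=\binom dj$, giving $\overline g_2(\C^*_d)=2\binom d2-(d-1)\binom d1=0$), and then one checks that $\overline{g}_2$ is additive under balanced connected sum, so that $\overline g_2(\ST^\times(n,d-1))=0$ for every admissible $n$. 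The additivity follows because under a connected sum along a facet (a $(d-1)$-simplex, which uses all $d$ colors exactly once) the flag $f$-numbers---and hence $h_1$ and $h_2$---add up with a correction that is the same as for $\C^*_d$ itself; I would phrase this via $f_1(\Gamma\#\Delta)=f_1(\Gamma)+f_1(\Delta)-\binom d2$ and $f_0(\Gamma\#\Delta)=f_0(\Gamma)+f_0(\Delta)-d$.

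The substance is the ``only if'' direction. Using the identity $\overline g_2(\Delta)=\frac{2}{d-2}\sum_{|S|=3}g_2(\Delta_S)$ from equation (\ref{g2_as_a_sum}), the hypothesis $\overline g_2(\Delta)=0$ forces $g_2(\Delta_S)=0$, and hence---by Corollary \ref{rigidity-corollary} together with Lemma \ref{3-rigidity}---that every $3$-colored restriction $\Delta_S$ is generically $3$-stress-free, equivalently (by Theorem \ref{rigidity_ineq}) $3$-acyclic. I would first use this to pin down the local structure: for every vertex $v$ of color, say, $1$, and every $2$-subset $\{i,j\}\subseteq[d]\setminus\{1\}$, the link $\lk_\Delta(v)$ restricted to colors $\{i,j\}$ sits inside $\Delta_{\{1,i,j\}}$, and stress-freeness of $\Delta_{\{1,i,j\}}$ should force $\lk_\Delta(v)$ to be $2$-stacked in the appropriate balanced sense; iterating, $\lk_\Delta(v)$ is a stacked cross-polytopal $(d-2)$-sphere. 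This is the analog of Kalai's step showing that in the equality case of LBT every vertex link is a stacked sphere; the balanced refinement is exactly Corollary \ref{balanced8.4}, asserting that a balanced complex all of whose vertex links are stacked cross-polytopal spheres lies in the balanced Walkup class. So the argument is: $\overline g_2=0$ $\Rightarrow$ all vertex links are stacked cross-polytopal spheres $\Rightarrow$ $\Delta$ is in the balanced Walkup class. One then invokes the structure theory of the balanced Walkup class developed in Section 4 (the balanced analog of Walkup's result that class-$\mathcal K$ members with $g_2=0$---equivalently, with no handles---are stacked spheres) to conclude that a balanced Walkup complex with $\overline g_2=0$ is obtained from $\C^*_d$ by balanced connected sums only (no balanced handle additions, since each handle addition strictly increases $\overline g_2$), i.e.\ is a stacked cross-polytopal sphere.

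Concretely the key steps, in order, are: (1) dispatch the ``if'' direction via additivity of $\overline g_2$ under balanced connected sum and the computation for $\C^*_d$; (2) from $\overline g_2(\Delta)=0$ and (\ref{g2_as_a_sum}) deduce $g_2(\Delta_S)=0$ for all $3$-subsets $S$, hence all $\Delta_S$ are generically $3$-stress-free; (3) an induction on $d$ (base case $d=4$, where one works directly with $2$-spheres as vertex links of $3$-pseudomanifolds) transferring stress-freeness of the $\Delta_S$ to the assertion that each vertex link $\lk_\Delta(v)$ is a balanced normal $(d-2)$-pseudomanifold with $\overline g_2=0$, so by induction a stacked cross-polytopal $(d-2)$-sphere; (4) conclude via Corollary \ref{balanced8.4} that $\Delta$ belongs to the balanced Walkup class, and then via the Walkup-class structure theory (counting $\overline g_2$ contributions of connected sums versus handle additions) that $\Delta=\ST^\times(n,d-1)$.

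The main obstacle I expect is step (3): making rigorous the passage from ``all $3$-colored restrictions $\Delta_S$ are stress-free'' to ``every vertex link is a stacked cross-polytopal sphere.'' Unlike the non-balanced case, where one directly exploits that a stress-free normal pseudomanifold with the minimal edge count has every ridge in a ``missing facet'' configuration, here one must track colors carefully: stress-freeness of $\Delta_{\{1,i,j\}}$ controls only the bipartite-like interaction of color $1$ with colors $i$ and $j$, and one needs these local conditions, ranging over all $\binom{d-1}{2}$ choices of $\{i,j\}$, to assemble into the global statement that $\lk_\Delta(v)$ decomposes as an iterated balanced connected sum of $(d-1)$-dimensional cross-polytope boundaries. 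I anticipate this requires either a dimension induction feeding Lemma \ref{3-rigidity} and Corollary \ref{rigidity-corollary} into the links, or an explicit analysis of how a $3$-acyclic $\Delta_S$ forces every $(d-3)$-face of $\Delta$ to lie in a cross-polytopal ``block,'' together with a connectivity argument using the facet-ridge graph (as in the proof of Lemma \ref{3-rigidity}) to glue the blocks.
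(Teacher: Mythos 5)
Your high-level plan is close to the paper's for $d\geq 5$, but it has a genuine gap precisely where you wave at the base case. The inductive step you describe is essentially what the paper does and is not where the difficulty lies: the passage from $\overline{g}_2(\Delta)=0$ to $\overline{g}_2(\lk_\Delta(v))=0$ for every vertex $v$ is routine (Lemma~\ref{links}: each $(\lk_\Delta(v))_T$ is a subgraph of the stress-free $\Delta_T$, hence stress-free, and Lemma~\ref{restrictions} applied to the link converts this back to $\overline{g}_2(\lk_\Delta(v))=0$). Once all vertex links are known to be stacked cross-polytopal spheres, your route through Corollary~\ref{balanced8.4} (valid for $d\geq 5$, and its proof is logically independent of Theorem~\ref{balancedLBT-equality}) plus the $\overline{g}_2$-counting argument that forbids handle additions does give an alternative to the paper's direct missing-face decomposition. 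The paper instead argues directly: by Lemma~\ref{missing_faces} $\Delta$ has only missing $1$- and $(d-1)$-faces, so either all links are $\C^*_{d-1}$ (whence $\Delta=\C^*_d$ by Lemma~\ref{cross_polytope}) or some link has a missing $(d-2)$-face which promotes to a missing $(d-1)$-face of $\Delta$, and Lemma~\ref{missing(d-1)face} then splits $\Delta$ as a balanced connected sum. Both routes use the same underlying tools; yours is a bit cleaner at the top but merely outsources the same combinatorics.

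The gap is the base case $d=4$, which you dismiss with ``one works directly with $2$-spheres as vertex links of $3$-pseudomanifolds.'' This cannot work as stated, for two reasons. First, every balanced $2$-sphere automatically satisfies $\overline{g}_2=2g_2=0$ by Dehn--Sommerville, so knowing $\overline{g}_2(\lk_\Delta(v))=0$ for a $3$-dimensional $\Delta$ carries no information whatsoever, and there is no meaningful $d=3$ instance of the theorem to induct from. Second, Corollary~\ref{balanced8.4} is false/unavailable at $d=4$ (it requires $d\geq 5$; at $d=4$ the hypothesis ``all vertex links are stacked cross-polytopal spheres'' is far too weak, since that condition is automatic for balanced $2$-spheres). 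The paper's treatment of $d=4$ is the bulk of the proof: Lemma~\ref{orientable} upgrades the normal $3$-pseudomanifold to a simplicial $3$-manifold using Dehn--Sommerville on vertex links; Lemma~\ref{missing_faces} reduces to the flag case or to a missing $3$-face; and for the flag case, Lemmas~\ref{3-manifold-lemma1} and~\ref{3-manifold-lemma2} carry out a delicate rigidity-and-planarity analysis of how vertex links of the same color can intersect, culminating in the conclusion that a flag balanced $3$-manifold with $\overline{g}_2=0$ must be $\C^*_4$. None of this is foreshadowed in your proposal, and without it the induction has no floor to stand on.
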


If $\Delta$ is a stacked cross-polytopal sphere, the result follows from the fact that $h_i(A\#B) = h_i(A)+h_i(B)$ for all $0<i<d$ and the simple calculation  $h_i(\C^*_d)=\binom{d}{i}$ for $0\leq i\leq d$. The proof that $\overline{g}_2(\Delta) = 0$ implies that $\Delta$ is a stacked cross-polytopal sphere is similar in spirit to one of the proofs of \cite[Theorem 7.1]{Kalai-rigidity} (see Sections 8 and 9 of \cite{Kalai-rigidity}) and will take a large part of this section. It will require the following lemmas and definitions.

A \textit{missing face} of a simplicial complex $\Delta$ is any subset $F$ of the vertex set of $\Delta$ with the property that $F$ is not a face of $\Delta$ but every proper subset of $F$ is. The dimension of a missing face $F$ is $|F|-1$. A \textit{missing $i$-face} is a missing face of dimension $i$. A simplicial complex $\Delta$ is a \textit{flag complex} if all of its missing faces have dimension 1.

\begin{lemma} \label{restrictions}
Let $\Delta$ be a balanced normal $(d-1)$-pseudomanifold with $d\geq 3$. Then $\overline{g}_2(\Delta)=0$ if and only if  $g_2(\Delta_S)=0$  for every $3$-element subset $S$ of $[d]$, which in turn happens if and only if $\Delta_S$ is generically $3$-stress free for every $3$-element subset $S$ of $[d]$.
\end{lemma}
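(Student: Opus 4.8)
The plan is to exploit the identity already recorded in equation (\ref{g2_as_a_sum}), namely $\overline{g}_2(\Delta)=\frac{2}{d-2}\sum_{|S|=3}g_2(\Delta_S)$, and combine it with the rigidity information supplied by Lemma \ref{3-rigidity}. First I would observe that by Lemma \ref{3-rigidity} every $\Delta_S$ with $|S|=3$ is generically $3$-rigid, so Corollary \ref{rigidity-corollary} gives $g_2(\Delta_S)=h_2(\Delta_S)-h_1(\Delta_S)\ge 0$ for each such $S$. Thus in (\ref{g2_as_a_sum}) we are looking at a sum of nonnegative terms with a positive coefficient $\frac{2}{d-2}$, and such a sum vanishes if and only if each summand vanishes. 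This immediately yields the equivalence of $\overline{g}_2(\Delta)=0$ with $g_2(\Delta_S)=0$ for all $3$-subsets $S$.

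For the second equivalence I would again invoke Corollary \ref{rigidity-corollary}: since $\Delta_S$ is generically $3$-rigid, that corollary states precisely that $h_2(\Delta_S)=h_1(\Delta_S)$ (equivalently $g_2(\Delta_S)=0$) holds if and only if the graph of $\Delta_S$ is generically $3$-stress free. So $g_2(\Delta_S)=0$ for every $3$-element $S\subseteq[d]$ is the same as $\Delta_S$ being generically $3$-stress free for every such $S$, which is the third condition in the statement.

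There is essentially no obstacle here: the lemma is a bookkeeping consequence of the decomposition formula (\ref{g2_as_a_sum}), the genericity of $3$-rigidity of colour-triples (Lemma \ref{3-rigidity}), and the rigidity dictionary (Corollary \ref{rigidity-corollary}). The only point requiring a word of care is that the coefficient $\frac{2}{d-2}$ is strictly positive when $d\ge 3$ — for $d=3$ the sum has a single term $S=[3]$ and the statement reads $\overline{g}_2(\Delta)=2g_2(\Delta)$, which matches the observation made after Definition (the balanced $g$-number). So the argument is uniform for all $d\ge 3$, and the conclusion follows.
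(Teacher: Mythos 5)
Your argument is correct and is exactly the paper's proof: the paper states that the lemma is an immediate consequence of equation~(\ref{g2_as_a_sum}), Lemma~\ref{3-rigidity}, and Corollary~\ref{rigidity-corollary}, which are precisely the three ingredients you combine. The observation that a sum of nonnegative terms vanishes only when each term does, plus the rigidity dictionary for the stress-free characterization, is the intended route.
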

\begin{proof} This is an immediate consequence of eq.~(\ref{g2_as_a_sum}), Lemma \ref{3-rigidity}, and Corollary \ref{rigidity-corollary}.
\end{proof}

\begin{lemma} \label{links}
Let $\Delta$ be a balanced normal $(d-1)$-pseudomanifold with $d\geq 4$. If $\overline{g}_2(\Delta)=0$, then $\overline{g}_2(\lk_\Delta(v))=0$ for every vertex $v$ of $\Delta$.
\end{lemma}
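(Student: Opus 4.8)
The plan is to reduce the statement about $\overline{g}_2$ of a link to the statement about $\overline{g}_2$ of a $3$-color restriction via Lemma~\ref{restrictions}, and then to relate the restrictions of $\lk_\Delta(v)$ to the restrictions of $\Delta$ itself. By Lemma~\ref{restrictions} applied to $\Delta$, the hypothesis $\overline{g}_2(\Delta)=0$ is equivalent to saying that $\Delta_S$ is generically $3$-stress free for every $3$-subset $S\subseteq[d]$. We want to conclude, again via Lemma~\ref{restrictions} (this time applied to the balanced normal $(d-2)$-pseudomanifold $\lk_\Delta(v)$, which is balanced with colors in $[d]\setminus\{\kappa(v)\}$), that $(\lk_\Delta(v))_T$ is generically $3$-stress free for every $3$-subset $T\subseteq[d]\setminus\{\kappa(v)\}$. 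So the whole statement comes down to: a $3$-stress on $(\lk_\Delta(v))_T$ would produce, or be part of, a $3$-stress on $\Delta_{T}$ or on $\Delta_{T\cup\{\kappa(v)\}}$, contradicting stress-freeness of the latter.

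First I would fix $v$ with $\kappa(v)=c$, fix a $3$-subset $T\subseteq[d]\setminus\{c\}$, and observe that $(\lk_\Delta(v))_T = \lk_{\Delta_{T\cup\{c\}}}(v)$: a face $F\subseteq V_T$ lies in $\lk_\Delta(v)$ iff $F\cup\{v\}\in\Delta$, and since $F\cup\{v\}\subseteq V_{T\cup\{c\}}$ this is the same as $F\cup\{v\}\in\Delta_{T\cup\{c\}}$. Now $\Delta_{T\cup\{c\}}$ is a $3$-colored complex of dimension at most $2$ whose vertex $v$ has link $(\lk_\Delta(v))_T$. The key step is the following rigidity-theoretic fact: if $\Gamma$ is a ($2$-dimensional) complex that is generically $3$-stress free, then $\lk_\Gamma(v)$ is generically $3$-stress free for every vertex $v$. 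Indeed, any stress $w$ on $\lk_\Gamma(v)$ (with respect to a generic $3$-embedding) can be extended to a stress on $\Star_\Gamma(v)=v*\lk_\Gamma(v)$ by choosing the weights on the edges $\{v,u\}$ appropriately — the one equilibrium condition at $v$ together with the modified conditions at the $u\in\lk_\Gamma(v)$ can be solved because a generic embedding of a cone places the apex in general position — and this stress on $\Star_\Gamma(v)$, extended by zero to all of $\Gamma$, is a nonzero stress on $\Gamma$. Hence $w$ must have been zero. (This is exactly the cone/star argument standard in the rigidity literature, e.g. the behaviour of stresses under the ``one-point suspension/cone'' operation used by Kalai; I would cite the relevant statement rather than reprove it.)

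Assembling: from $\overline{g}_2(\Delta)=0$, Lemma~\ref{restrictions} gives that $\Delta_{T\cup\{c\}}$ is generically $3$-stress free; the cone lemma then gives that $\lk_{\Delta_{T\cup\{c\}}}(v)=(\lk_\Delta(v))_T$ is generically $3$-stress free; since this holds for every $3$-subset $T$ of the $(d-1)$-element color set of $\lk_\Delta(v)$, Lemma~\ref{restrictions} applied to $\lk_\Delta(v)$ yields $\overline{g}_2(\lk_\Delta(v))=0$. The one point requiring care is that Lemma~\ref{restrictions} is stated for balanced normal pseudomanifolds, so I must note at the outset that $\lk_\Delta(v)$ is indeed a balanced normal $(d-2)$-pseudomanifold with $d-1\geq 3$ colors (recorded in Section~2 of the excerpt: links of faces of dimension $\le d-2$ in a normal pseudomanifold are again normal pseudomanifolds, and the coloring of $\Delta$ restricts to one of the link). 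The main obstacle is the rigidity lemma that passing to a vertex link preserves generic stress-freeness for $2$-complexes; everything else is bookkeeping. If a clean citation for that is not available in the exact form needed, the fallback is to argue directly that a stress on $(\lk_\Delta(v))_T$ that is nonzero must, when we re-cone with $v$ inside $\Delta_{T\cup\{c\}}$ and use that $v$ together with its link carries all the edges of a full star, extend to a nonzero stress of $\Delta_{T\cup\{c\}}$, using that the embedding is generic.
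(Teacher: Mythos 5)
Your setup is right—pass to the $3$-color restrictions of $\lk_\Delta(v)$ and invoke Lemma~\ref{restrictions} on both ends—but the middle step has a genuine error. You work with $\Delta_{T\cup\{c\}}$ and describe it as ``a $3$-colored complex of dimension at most $2$,'' but $T\cup\{c\}$ has \emph{four} elements (you chose $T\subseteq[d]\setminus\{c\}$ with $|T|=3$), so $\Delta_{T\cup\{c\}}$ is $4$-colored and of dimension up to $3$. Consequently, the assertion ``Lemma~\ref{restrictions} gives that $\Delta_{T\cup\{c\}}$ is generically $3$-stress free'' is false: Lemma~\ref{restrictions} only gives $3$-stress freeness for $3$-element color sets, and $\Delta_{T\cup\{c\}}$ is generally \emph{not} $3$-stress free (already for $d=4$ it is $\Delta$ itself). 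This breaks the argument: even if a cone-type lemma pushed a putative stress on $(\lk_\Delta(v))_T$ to one on $\Delta_{T\cup\{c\}}$, that would produce no contradiction, since $\Delta_{T\cup\{c\}}$ is permitted to carry stresses. (The cone lemma you gesture at is also dimension-shifted: the standard statement relates $d$-rigidity of a complex to $(d+1)$-rigidity of its cone, not $d$-rigidity to $d$-rigidity.)

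The correct route is considerably simpler and avoids any cone argument. Since $v$ has color $c$ and $c\notin T$, every face of $(\lk_\Delta(v))_T$ is a face of $\Delta$ whose vertices are colored by $T$, i.e., $(\lk_\Delta(v))_T$ is a \emph{subcomplex} of $\Delta_T$ — not a vertex link of a larger $4$-colored restriction. Now $\Delta_T$ is generically $3$-stress free by Lemma~\ref{restrictions}, and a subgraph of a generically $3$-stress free graph is again generically $3$-stress free. Hence $(\lk_\Delta(v))_T$ is generically $3$-stress free for every $3$-subset $T\subseteq[d]\setminus\{c\}$, and Lemma~\ref{restrictions} applied to the balanced normal $(d-2)$-pseudomanifold $\lk_\Delta(v)$ (which has $d-1\ge 3$ colors) yields $\overline{g}_2(\lk_\Delta(v))=0$. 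The identity $(\lk_\Delta(v))_T=\lk_{\Delta_{T\cup\{c\}}}(v)$ you observed is true, but it is a detour; the subcomplex observation $(\lk_\Delta(v))_T\subseteq\Delta_T$ is what closes the proof.
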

\begin{proof} Pick a vertex $v$ of $\Delta$ and let $c$ be the color of $v$. Then for every $3$-element subset $T$ of $[d]-\{c\}$, $(\lk_\Delta(v))_T$ is a subcomplex of $\Delta_T$. Since $\overline{g}_2(\Delta)=0$, by Lemma \ref{restrictions}, $\Delta_T$ is generically 3-stress free. As a subgraph of a generically 3-stress free graph is also generically 3-stress free, we conclude that $(\lk_\Delta(v))_T$ is generically 3-stress free for every subset $T\subseteq [d]-\{c\}$ of size 3. Lemma \ref{restrictions} applied to $\lk_\Delta(v)$ then completes the proof. (Note that since $d\geq 4$, $d-1=\dim(\lk_\Delta(v))+1\geq 3$.) \end{proof}

\begin{lemma} \label{missing_faces}
Let $\Delta$ be a balanced normal $(d-1)$-pseudomanifold with $d\geq 3$. If $\overline{g}_2(\Delta)=0$, then every missing face of $\Delta$ has dimension $1$ or $d-1$.
\end{lemma}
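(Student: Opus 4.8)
The plan is to argue by contradiction: suppose $F$ is a missing face of $\Delta$ with $2 \le \dim F \le d-2$, i.e.\ $3 \le |F| \le d-1$, and derive a nonzero $3$-stress in some $\Delta_S$ with $|S|=3$, contradicting Lemma \ref{restrictions}. First I would observe that since $\Delta$ is balanced, the vertices of $F$ receive distinct colors; let $T = \kappa(F) \subseteq [d]$, so $3 \le |T| \le d-1$. The key idea is to reduce to the case $|F| = 3$ by passing to a link. Concretely, I would choose a suitable face $G \in \Delta$, disjoint from $F$ in colors, so that in $\lk_\Delta(G)$ the set $F$ becomes a missing $2$-face, while $\lk_\Delta(G)$ remains a balanced normal pseudomanifold with $\overline{g}_2 = 0$ (using that links of faces of a normal pseudomanifold are again normal pseudomanifolds, and Lemma \ref{links} iterated — or, if $G$ is not a single vertex, a straightforward extension of Lemma \ref{links} to links of arbitrary faces, which follows the same restriction-of-a-stress-free-graph argument). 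Here the point is that $F \in \Delta$ iff $F \cup G \in \Delta$ is handled by the link relation, and since $F$ is missing but all its proper subsets are faces, $F$ will remain missing in $\lk_\Delta(G)$ provided $G \cup F' \in \Delta$ for every proper subset $F' \subsetneq F$, which one can arrange by choosing $G$ inside the link of one such $F'$; some care is needed to pick $G$ consistently, but since $\dim F \le d-2$ the colors $[d] \setminus T$ are nonempty and one can build $G$ greedily.

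Having reduced to $|F| = 3$, say $F = \{x,y,z\}$ with colors $T = \{i,j,k\}$, I would then look at $\Delta_S$ for $S = T = \{i,j,k\}$. This is a generically $3$-stress free graph by Lemma \ref{restrictions}. Now $F$ is a missing triangle of $\Delta$, and since every proper subset of $F$ is a face of $\Delta$ with colors a subset of $S$, all three edges $xy$, $yz$, $xz$ lie in $\Delta_S$. The plan is to show that adding the non-edge (here there is no non-edge — all three pairs are edges; the missing object is the $2$-face itself), so instead I would argue directly that $\Delta_S$, being a $2$-dimensional normal pseudomanifold (it is $\lk$ of a $(d-3)$-face union, as in Lemma \ref{3-rigidity}), contains a full triangulated surface in which $\{x,y,z\}$ bounds but is not filled. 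More precisely: in a $2$-dimensional normal pseudomanifold, a $3$-cycle $x$–$y$–$z$ that is not a face must be non-contractible or separate the surface; in either case the complex contains a $3$-cycle that is not the boundary of a disk of triangles, which forces a nonzero $2$-stress after coning, equivalently a nonzero $3$-stress on $\Delta_S$. This contradicts generic $3$-stress freeness.

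Let me restructure the final step more cleanly, since the cleanest route avoids surface topology: I would instead use the characterization that a generically $3$-stress free simplicial complex whose graph contains a $3$-cycle must contain that $3$-cycle as a face — this is essentially because a $3$-cycle $\{x,y,z\}$ with all edges present but the triangle absent, sitting inside a $2$-pseudomanifold, yields a stress. The very direct argument: in $\Delta_S$ the edges $xy, yz, xz$ are all present; pick any facet (triangle) of $\Delta_S$ — by connectivity of the dual graph of the $2$-pseudomanifold $\Delta_S$ and an Euler-characteristic / stress count, the presence of a ``missing'' triangle among existing edges is exactly the statement $f_1(\Delta_S) > 3f_0(\Delta_S) - 6$ would fail or the triangle count is forced; more precisely, a $2$-pseudomanifold is $3$-stress free iff it is stacked (a stacked sphere or, in the non-sphere case, never $3$-stress free), and stacked $2$-spheres are characterized by having no missing triangles other than... — this is getting delicate, so in the write-up I would simply invoke: a normal $2$-pseudomanifold is generically $3$-stress free if and only if it is a stacked $2$-sphere (Theorem \ref{LBT} with $d=3$ interpreted via Corollary \ref{rigidity-corollary}), and stacked $2$-spheres have the property that their only missing faces are edges. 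Hence $\Delta_S$ has no missing $2$-face, so $F$ is a face of $\Delta_S$, hence of $\Delta$, contradicting that $F$ is missing.

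The main obstacle I anticipate is the reduction to $|F|=3$: one must choose the coning face $G$ so that $F$ remains a missing face in $\lk_\Delta(G)$ (all proper subsets still faces, $F$ itself still not a face) and $\lk_\Delta(G)$ still satisfies the hypotheses, and this requires a short lemma extending Lemma \ref{links} from vertex links to links of arbitrary faces, plus a careful inductive/greedy construction of $G$; this bookkeeping is routine but is where the real care lies. The surface-theoretic input at the end (no missing triangle in a stacked $2$-sphere, equivalently generic $3$-stress freeness forces the triangle) is standard and can be cited from Lemma \ref{restrictions} together with the $d=3$ case of the Lower Bound Theorem.
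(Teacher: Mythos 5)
There is a genuine gap, and it is at the very end of your argument. You claim that stacked $2$-spheres have no missing $2$-faces, but this is false: the stacked $2$-sphere $\ST(5,2)$, built as a connected sum of two tetrahedral boundaries glued along a triangle $\{1,2,3\}$, has $\{1,2,3\}$ as a missing $2$-face (all three edges $12,13,23$ are present, but the triangle itself was removed in the connected sum). In general, the facets removed in forming any stacked sphere by connected sums become missing top-dimensional faces. So the conclusion ``$\Delta_S$ is a stacked $2$-sphere, hence has no missing triangles, hence $F$ is a face'' does not go through. Indeed the very fact that the lemma \emph{allows} missing faces of dimension $d-1$ is precisely to accommodate the top-dimensional missing faces created by connected sums in stacked cross-polytopal spheres, and for $d=3$ the lemma is vacuous for exactly this reason. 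Relatedly, a missing triangle in a $3$-stress-free graph does not produce a nonzero $3$-stress; a missing \emph{edge} in a $3$-rigid graph does.

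There is also a structural problem with your reduction. You propose passing to $\lk_\Delta(G)$ for $G$ disjoint from $F$ in colors, but such a link leaves $|F|$ unchanged (it only lowers the ambient dimension), so it cannot reduce to ``$F$ is a $3$-element set.'' The way this is arranged correctly is the paper's approach, which chooses the $3$-element color set $T$ to include a color \emph{outside} $\kappa(F)$: pick $u_0,u_1\in F$ and a vertex $z$ colored outside $\kappa(F)$, set $T=\kappa(\{u_0,u_1,z\})$ and $W=F\setminus\{u_0,u_1\}$. Then $U=\{u_0,u_1\}$ is a missing \emph{edge} of $\lk_\Delta(W)$, and also of $(\lk_\Delta(W))_T$, which is generically $3$-rigid by Lemma \ref{3-rigidity} (applied to the link, itself a balanced normal pseudomanifold). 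Hence $U$ depends on $(\lk_\Delta(W))_T$, so $(\lk_\Delta(W))_T\cup\{U\}$ carries a nonzero stress; since $U$ is an edge of $\Delta$ with colors in $T$ and $W$ uses colors outside $T$, this graph sits inside $\Delta_T$, contradicting the $3$-stress-freeness of $\Delta_T$ from Lemma \ref{restrictions}. The crucial difference from your plan is using a color set $T$ that is \emph{not} $\kappa(F)$ and working with a missing edge of a link rather than a missing triangle of a restriction. One more small point: $\Delta_S$ for $|S|=3$ is a union of normal $2$-pseudomanifolds (the links of $(d-3)$-faces of the complementary colors), not generally a single normal $2$-pseudomanifold, so the appeal to ``$3$-stress free $\Rightarrow$ stacked $2$-sphere'' would need extra justification even if the conclusion were true.
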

\begin{proof} For $d=3$ there is nothing to prove, so assume that $d\geq 4$ and that $F=\{u_0, u_1,\ldots, u_k\}$ is a missing $k$-face of $\Delta$ for some $1<k<d-1$. Since $k+1<d$, there is a vertex $z$ of $\Delta$ whose color is different from the colors of vertices of $F$. Let $T$ be the color set of $\{u_0,u_1,z\}$, let $U=\{u_0,u_1\}\in\Delta_T$, and let $W=F-U$. As $F$ is a missing face of $\Delta$, it follows that $U$ is a missing 1-face of $\lk_\Delta(W)$, and hence  also of $(\lk_\Delta(W))_T$. However, by Lemma \ref{3-rigidity}, the graph of  $(\lk_\Delta(W))_T$ is generically 3-rigid. Hence the edge $U$ depends on the graph of $(\lk_\Delta(W))_T$ (w.r.t.~a generic embedding), and so $(\lk_\Delta(W))_T \cup\{U\}$ is not generically 3-stress free. Since $(\lk_\Delta(W))_T \cup\{U\}$ is a subcomplex of $\Delta_T$, we obtain that $\Delta_T$ is not generically $3$-stress free. This  contradicts Lemma~\ref{3-rigidity}.
\end{proof}

\begin{lemma}  \label{missing(d-1)face}
Let $\Delta$ be a balanced connected simplicial $(d-1)$-manifold with $d\geq 3$. Assume further that either $d\in\{3,4\}$ or all vertex links of $\Delta$ are stacked cross-polytopal spheres. If $\overline{g}_2(\Delta)=0$ and $\Delta$ has a missing $(d-1)$-face, then there exist balanced connected simplicial $(d-1)$-manifolds $\Delta_1$ and $\Delta_2$ such that $\Delta$ is the balanced connected sum of $\Delta_1$ and $\Delta_2$; moreover $\overline{g}_2(\Delta_1)= \overline{g}_2(\Delta_2)=0$.
\end{lemma}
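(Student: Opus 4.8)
The plan is to run the standard connected-sum-splitting argument across a missing facet, but keeping track of the balanced structure and of the vanishing of $\overline{g}_2$. Suppose $F=\{w_1,\dots,w_d\}$ is a missing $(d-1)$-face of $\Delta$, with $\kappa(F)=[d]$ (every missing facet of a balanced $(d-1)$-complex uses all $d$ colors). The idea is that such an $F$ "cuts" $\Delta$ into two pieces. Concretely, consider the set of facets $G$ of $\Delta$ with $G\cap F$ of corank $1$; because $\Delta$ is a normal pseudomanifold, the ridge $G\setminus\{v\}$ for $v\in G\setminus F$, $G\cap F=F\setminus\{u\}$, lies in exactly two facets. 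The first step is to show that the facet-ridge graph of $\Delta$, once we delete all ridges contained in $F$ (i.e. the $d$ sets $F\setminus\{w_i\}$ — note these are genuine faces of $\Delta$ since $F$ is a \emph{missing} face, so all its proper subsets are faces), becomes disconnected into exactly two components. One component together with its closure and with the face $F$ filled in will be $\Delta_1$; the other together with $F$ filled in will be $\Delta_2$. That each of $\Delta_1,\Delta_2$ is a simplicial complex is automatic; that each is a manifold, hence a normal pseudomanifold, follows by checking vertex links: a vertex link in $\Delta_i$ is obtained from the corresponding link in $\Delta$ by the analogous cut-and-fill operation one dimension down, and here is where the hypothesis "$d\in\{3,4\}$ or all vertex links are stacked cross-polytopal spheres" is used — in the latter case a vertex link already splits as a connected sum of $\C^*_{d-1}$'s along its missing facets, so we just have to route the global cut through the right pieces; in the former case $d-1\le 3$ and the link is a circle or a $2$-sphere, where a missing facet (a missing edge, resp. missing triangle) visibly separates.

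Once the topological decomposition $\Delta=\Delta_1\#_\varphi\Delta_2$ along $F$ is in place, with $\varphi$ color-preserving (which it is, since the identified facet inherits the coloring of $F$), the complex is a \emph{balanced} connected sum, and both $\Delta_1,\Delta_2$ are balanced connected simplicial $(d-1)$-manifolds. The second step is the $h$-vector bookkeeping: for a connected sum of two pure $(d-1)$-complexes one has $f_{i-1}(\Delta)=f_{i-1}(\Delta_1)+f_{i-1}(\Delta_2)-\binom{d}{i}$ for all $i$ (we delete one shared facet's worth of every lower face exactly once, i.e. a copy of the boundary-of-simplex count), and translating through the definition of $h$ this yields $h_j(\Delta)=h_j(\Delta_1)+h_j(\Delta_2)-h_j(\C^*_d)=h_j(\Delta_1)+h_j(\Delta_2)-\binom{d}{j}$ for $0<j<d$. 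Hence, using $\overline{g}_2(\C^*_d)=2\binom{d}{2}-(d-1)\binom{d}{1}=0$, we get $\overline{g}_2(\Delta)=\overline{g}_2(\Delta_1)+\overline{g}_2(\Delta_2)$. By Theorem \ref{balancedLBT} each summand is nonnegative (both $\Delta_i$ are balanced normal pseudomanifolds), so $\overline{g}_2(\Delta)=0$ forces $\overline{g}_2(\Delta_1)=\overline{g}_2(\Delta_2)=0$, which is exactly the claim.

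The step I expect to be the main obstacle is the first one: proving that removing the $d$ ridges inside the missing facet $F$ actually \emph{disconnects} the facet-ridge graph into two pieces, and that the two closures are genuine manifolds rather than merely pseudomanifolds with singularities along $F$. The potential pathology is that the "two sides" of $F$ might meet along lower-dimensional faces, so that filling in $F$ produces a pinch. This is precisely what is ruled out by Lemma \ref{missing_faces} (every missing face of $\Delta$ has dimension $1$ or $d-1$, so there are no missing faces of intermediate dimension to obstruct the local analysis of links) together with the hypothesis on vertex links, which forces each $\lk_\Delta(v)$ to be itself a connected sum of cross-polytope boundaries and hence to split cleanly whenever the cut $F$ passes through $v$ — i.e. whenever $v\in F$. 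For $v\notin F$ the link is untouched by the cut, and for $v\in F$ we induct on dimension, the base cases $d=3,4$ being handled directly. Carefully organizing this induction — in particular checking that the cut descends consistently to all links simultaneously, so that the global complex really is the manifold connected sum and not just a formal gluing — is the technical heart of the argument; everything after the decomposition is the routine $h$-vector computation above.
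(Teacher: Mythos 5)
There is a genuine gap. You assert that deleting the $d$ ridges inside the missing facet $F$ disconnects the facet-ridge graph, but you never rule out the alternative: that the cut complex stays connected, in which case $\Delta$ is obtained from it by a \emph{handle addition}, not a connected sum. Your proposed justification for disconnection --- that the links of the vertices of $F$ split along their missing facets because they are stacked cross-polytopal spheres (or low-dimensional spheres) --- is a purely local statement and cannot force global disconnection. Indeed, the complex $\BM_d$ constructed in Section 6 satisfies your hypothesis on vertex links and has missing facets, yet cutting along the missing facet coming from its handle yields a \emph{connected} sphere. The hypothesis on vertex links has nothing to do with excluding the handle case; what excludes it is $\overline{g}_2(\Delta)=0$, which you never use for this purpose.

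The paper's proof is much shorter and addresses exactly this dichotomy. Cut along $\partial F$ and fill in two copies of $F$ to get a balanced simplicial manifold $\Gamma$; the facts that $\Gamma$ is a manifold and that it is either connected (handle) or two components (connected sum) are quoted from Walkup and Bagchi--Datta rather than reproved. If $\Gamma$ were connected, then $f_0(\Gamma)=f_0(\Delta)+d$ and $f_1(\Gamma)=f_1(\Delta)+\binom{d}{2}$ give $\overline{g}_2(\Gamma)=\overline{g}_2(\Delta)-4\binom{d}{2}=-4\binom{d}{2}<0$, contradicting Theorem~\ref{balancedLBT}. Hence $\Gamma$ is disconnected, $\Delta=\Delta_1\#\Delta_2$, and $0=\overline{g}_2(\Delta)=\overline{g}_2(\Delta_1)+\overline{g}_2(\Delta_2)$ with both summands nonnegative (Theorem~\ref{balancedLBT} again) forces both to vanish. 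Your $h$-vector bookkeeping for the connected-sum case is correct, but it is only available after the handle case has been excluded.
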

\begin{proof} Let $F$ be a missing $(d-1)$-face of $\Delta$.  Let $\Gamma$ be the simplicial complex obtained from $\Delta$ by cutting along the boundary of $F$ and filling in the two missing $(d-1)$-faces that result from $F$. Then $\Gamma$ is a simplicial manifold that is either connected or is a disjoint union of two complexes $\Delta_1$ and $\Delta_2$. In the former case, $\Delta$ is obtained from $\Gamma$ by adding a handle, and in the latter case, $\Delta=\Delta_1\#\Delta_2$. (These results are well-known: the operation of cutting and patching is introduced by Walkup in \cite{Walkup}, where he also proves the above statements for $d=4$, see \cite[Lemma 3.2]{Walkup}, and a certain variant of these statements for $d>4$, \cite[Lemma 4.2]{Walkup}. A much more general statement for all $d\geq 3$ is proved in \cite[Lemma 3.3]{BagchiDatta-LBT}. )

Furthermore, since $F$ is a missing $(d-1)$-face of $\Delta$, the vertices of $F$ form a $d$-clique in the graph of $\Delta$, and hence no two of them have the same color. Thus, $\Delta\cup \{F\}$ is a balanced complex. Consequently, $\Gamma$ is balanced, and $\Delta$ is obtained from $\Gamma$ either by a balanced handle addition or as a balanced connected sum. In the former case, since $f_1(\Gamma)=f_1(\Delta)+\binom{d}{2}$ and $f_0(\Gamma)=f_0(\Delta)+d$, we obtain that $\overline{g}_2(\Gamma)=\overline{g}_2(\Delta)-4\binom{d}{2}=-4\binom{d}{2}<0$, which is impossible by Theorem \ref{balancedLBT}. Therefore, $\Delta$ is the balanced connected sum of $\Delta_1$ and $\Delta_2$. Then $0=\overline{g}_2(\Delta)=\overline{g}_2(\Delta_1)+\overline{g}_2(\Delta_2)$, and hence $\overline{g}_2(\Delta_1)= \overline{g}_2(\Delta_2)=0$ holds by Theorem \ref{balancedLBT}.
\end{proof}

\begin{lemma} \label{cross_polytope}
Let $\Delta$ be a balanced normal $(d-1)$-pseudomanifold with $d\geq 3$. If all vertex links of $\Delta$ are boundary complexes of a cross-polytope, then so is $\Delta$.
\end{lemma}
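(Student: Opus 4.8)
The plan is to bootstrap from the cross-polytopal structure of all vertex links to recover the same structure globally, proceeding by induction on $d$. The base case $d=3$ is where the real topological/combinatorial work happens: here $\Delta$ is a connected homology $2$-manifold (normal $2$-pseudomanifolds are exactly these) in which every vertex link is the boundary of a $2$-dimensional cross-polytope, i.e.\ a $4$-cycle. So every vertex has degree $4$, which forces $\Delta$ to be a surface with $f_0 = f_2/2$ and $f_1 = 2f_0$ (from $\sum_v \deg v = 2f_1$ and the Dehn--Sommerville relation $f_1 = \tfrac32 f_2$); combining these with Euler's formula gives $\chi(\Delta) = f_0 - f_1 + f_2 = f_0 - 2f_0 + f_0 = 0$. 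Wait — for a $2$-sphere we need $\chi = 2$, so in fact $\Delta$ has no vertices at all unless we argue more carefully; the correct conclusion is that $\Delta = \C^*_3$, which has $6$ vertices, $12$ edges, $8$ triangles, $\chi = 2$, and whose vertex links are indeed $4$-cycles. The way to pin this down is to show directly that a connected homology $2$-manifold all of whose vertex links are $4$-cycles must be $\C^*_3$: pick a vertex $v$ with neighbors $a,b,c,d$ cyclically around it, examine $\lk(a)$, which is a $4$-cycle through $v$ containing $b$ and $d$ (the two neighbors of $v$ adjacent to $a$ in $\lk(v)$), so its fourth vertex is some $w$; one then checks that $w$ must be the unique non-neighbor of $v$, and that the complex closes up combinatorially to exactly $\C^*_3$. (In particular $\Delta$ must be a sphere, not just a homology $2$-manifold, ruling out other surfaces by the $\chi$ computation.)

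For the inductive step, suppose $d \geq 4$ and the statement holds in dimension $d-2$. Fix any vertex $v$; by hypothesis $\lk_\Delta(v) = \C^*_{d-1}$, so it has exactly $2(d-1)$ vertices, and moreover every face of $\lk_\Delta(v)$ of dimension $\le d-3$ has a cross-polytopal link inside $\lk_\Delta(v)$. The key local observation is about missing faces: since $\lk_\Delta(v)$ is $\C^*_{d-1}$, for every color $i$ appearing in $\lk_\Delta(v)$ there are exactly two vertices $u_i, u_i'$ of that color in $\lk_\Delta(v)$, and $\{u_i, u_i'\}$ is a missing edge of $\lk_\Delta(v)$; hence $\{v, u_i, u_i'\}$-type configurations and the pairing structure propagate. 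I would argue that $\Delta$ itself has, for each color $i \in [d]$, exactly two vertices of color $i$: indeed fix $v$ of color $c$; the $d-1$ colors $\ne c$ each appear exactly twice in $\lk_\Delta(v)$, and walking from $v$ to any adjacent vertex $v'$ and comparing $\lk_\Delta(v)$ with $\lk_\Delta(v')$ (both cross-polytopes, sharing the face $\lk_\Delta(\{v,v'\})$ which by the codimension-$2$ induction is itself cross-polytopal) shows the vertex set doesn't grow. More precisely, $V(\Delta)$ is covered by $\Star_\Delta(v) = v * \C^*_{d-1}$ together with vertices reachable by such moves, and connectedness plus the rigidity of the cross-polytopal pattern forces $|V_i| = 2$ for all $i$, so $f_0(\Delta) = 2d$. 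Then $\Delta$ is a balanced $(d-1)$-pseudomanifold on $2d$ vertices; since $\C^*_d$ is the unique balanced complex on $2d$ vertices with $f_S = 2^{|S|}$, it suffices to show $\Delta$ has all faces of $\C^*_d$, equivalently that the only missing faces of $\Delta$ are the $d$ edges $\{u_i, u_i'\}$. This follows from Lemma~\ref{missing_faces}-style reasoning (a missing $k$-face with $1 < k < d-1$ would contradict generic $3$-rigidity of an appropriate link restriction, which holds by Lemma~\ref{3-rigidity}) once we know $\overline{g}_2(\Delta) = 0$ — and $\overline{g}_2(\Delta) = 0$ is forced because $h_1 = f_0 - d = d$, $h_2 = f_1 - (d-1)f_0 + \binom d2$, and $f_1 = \binom{2d}{2} - d = 2d^2 - 2d$ exactly when there are precisely $d$ missing edges, giving $h_2 = \binom d2$ and $\overline g_2 = 2\binom d2 - (d-1)d = 0$; conversely one shows no additional missing edges can occur since two non-adjacent same-colored vertices are the only missing edges in each cross-polytopal vertex link, and every edge of $\Delta$ lies in some vertex star.

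The main obstacle I anticipate is the bookkeeping that establishes $f_0(\Delta) = 2d$ (equivalently $|V_i| = 2$ for every color $i$) cleanly — this is the step where global connectivity of $\Delta$ must be leveraged against the purely local cross-polytopal data, and one has to rule out the possibility that distant parts of $\Delta$ contribute new vertices of a color already seen. The clean way around this is to argue: for an edge $\{v, v'\}$, the set of vertices adjacent to both is $V(\lk_\Delta(\{v,v'\})) = V(\C^*_{d-2})$, which has $2(d-2)$ vertices, two of each color in $[d]\setminus\{c,c'\}$; combined with $v, v'$ and their ``antipodes'' one reconstructs that $V(\Star_\Delta(v)) \cup V(\Star_\Delta(v'))$ already has exactly $2d$ vertices with two of each color, and then no further vertex can be added without creating a vertex link that is not a cross-polytope. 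An alternative, possibly slicker route avoiding the vertex count entirely: use Lemma~\ref{missing_faces} (valid since the cross-polytopal link hypothesis will give $\overline g_2(\Delta) = 0$ via the inductive identity $\overline g_2(\Delta)$ expressed through links, or directly) to conclude all missing faces have dimension $1$ or $d-1$, rule out missing $(d-1)$-faces (a missing $(d-1)$-face would, by Lemma~\ref{missing(d-1)face} if applicable, decompose $\Delta$ as a connected sum, but a connected sum of cross-polytopes has vertices with non-cross-polytopal links — a contradiction), so $\Delta$ is flag; a flag balanced normal pseudomanifold all of whose vertex links are cross-polytopes is then forced to be $\C^*_d$ by the local structure. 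I would present the induction with the vertex-count argument as the spine and invoke flagness as the finishing move.
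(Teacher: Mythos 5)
Your proposal has real gaps and misses the paper's much shorter argument. The paper's proof is a single $h$-vector computation: since every vertex has link $\C^*_{d-1}$ and hence degree $2(d-1)$, we have $f_1(\Delta) = (d-1)n$ where $n = f_0(\Delta)$, and this relation alone forces $h_2(\Delta) = \binom{d}{2}$ for \emph{any} $n$; therefore $\overline{g}_2(\Delta) = 2h_2(\Delta) - (d-1)h_1(\Delta) = (d-1)(2d-n)$. Theorem~\ref{balancedLBT} gives $\overline{g}_2(\Delta) \geq 0$, so $n \leq 2d$, and the standard fact that a balanced normal $(d-1)$-pseudomanifold has at least $2d$ vertices, with equality only for $\C^*_d$, finishes the proof in one step. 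You never observe that $h_2(\Delta) = \binom{d}{2}$ holds \emph{before} knowing $n = 2d$ --- you only compute it under the assumption $f_0 = 2d$ --- so you never reach the clean argument that converts the regular-degree observation into a bound on the vertex count.

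Concretely: (1) Your $d=3$ Euler characteristic computation is wrong. Degree $4$ at every vertex gives $f_1 = 2f_0$ and $3f_2 = 2f_1$, hence $f_2 = \tfrac{4}{3}f_0$ and $\chi = f_0 - 2f_0 + \tfrac{4}{3}f_0 = \tfrac{1}{3}f_0$, so $\chi = 2$ forces $f_0 = 6$ directly; your line $\chi = f_0 - 2f_0 + f_0 = 0$ implicitly sets $f_2 = f_0$, which is false, and the ensuing confusion is a symptom of that error, not a feature to be patched. (2) The proposed inductive step that $|V_i| = 2$ for each color is not an argument: ``walking from $v$ to $v'$ and comparing links shows the vertex set doesn't grow'' asserts the conclusion, and you yourself flag this as the main unresolved obstacle. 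The induction on dimension via links does not by itself supply any global control on the vertex count. (3) The ``slicker'' alternative is circular: Lemma~\ref{missing_faces} needs $\overline{g}_2(\Delta) = 0$, but your derivation of $\overline{g}_2(\Delta) = 0$ begins from $h_1 = f_0 - d = d$, which presupposes $f_0 = 2d$, the thing to be proved. Moreover Lemma~\ref{missing(d-1)face} is stated for simplicial manifolds, not arbitrary normal pseudomanifolds, so invoking it here requires additional justification. The paper avoids all of this by treating $n$ as an unknown and letting Theorem~\ref{balancedLBT} produce the upper bound.
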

\begin{proof} Let $n$ be the number of vertices of $\Delta$. Since all vertex links of $\Delta$ are isomorphic to $\C^*_{d-1}$, every vertex of $\Delta$ has exactly $2(d-1)$ neighbors. Hence, $f_1(\Delta)=(d-1)n$. Therefore, $h_1(\Delta)=n-d$,
$h_2(\Delta)=f_1(\Delta)-(d-1)h_1(\Delta)-\binom{d}{2}=\binom{d}{2}$, and $\overline{g}_2(\Delta)=2h_2(\Delta)-(d-1)h_1(\Delta)=(d-1)(2d-n)$. However, by Theorem \ref{balancedLBT}, $\overline{g}_2(\Delta)\geq 0$, and so $n\leq 2d$. On the other hand, it is well-known and easy to prove (e.g., by induction on $d$; cf.~Proposition \ref{at_least_3d} below) that every balanced $(d-1)$-pseudomanifold has at least $2d$ vertices and the only one that has exactly $2d$ vertices is $\C^*_d$. 
\end{proof}
%

We are now in a position to complete the proof of Theorem \ref{balancedLBT-equality}.  The proof will be by induction on $d$.  In the base case that $d=4$, Lemma \ref{missing_faces} tells us that either $\Delta$ is flag or $\Delta$ has a missing $3$-face.  We require two additional lemmas in the former case.

\begin{lemma} \label{3-manifold-lemma1}
Let $\Delta$ be a simplicial $3$-manifold that is both flag and balanced and satisfies $\overline{g}_2(\Delta)=0$.  Assume $v_1$ and $v_2$ are two vertices of the same color such that $\lk_{\Delta}(v_1) \cap \lk_{\Delta}(v_2)$ contains a $2$-face, $F$.  Then either $\lk_{\Delta}(v_1)$ and $\lk_{\Delta}(v_2)$ intersect precisely along  the simplex on $F$ or $\lk_{\Delta}(v_1) = \lk_{\Delta}(v_2)$, in which case $\Delta$ is the suspension with $v_1$ and $v_2$ as suspension vertices.
\end{lemma}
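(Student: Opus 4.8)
\medskip\noindent\emph{Plan of proof.}
The plan is to work with the two links $L_i:=\lk_\Delta(v_i)$, which are simplicial $2$-spheres since $\Delta$ is a $3$-manifold, and with $Y:=L_1\cap L_2$. Writing $c$ for the common color of $v_1,v_2$, every vertex of $L_1$ and of $L_2$ avoids color $c$, so $L_1,L_2$ (and hence $Y$) are subcomplexes of the restriction $\Delta_S$ for the $3$-element set $S:=[d]\setminus\{c\}$; and since $v_1\not\sim v_2$, neither lies in the closed star of the other, so $\Star_\Delta(v_1)\cap\Star_\Delta(v_2)=Y$.

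First I would pin down the $f$-vector of $Y$. By Lemma~\ref{restrictions} (which uses $\overline{g}_2(\Delta)=0$), $\Delta_S$ is generically $3$-stress free, hence so are the graphs of $L_1\cup L_2$ and of $Y$; in particular a graph on $N\ge 3$ vertices occurring here has at most $3N-6$ edges. Since each $L_i$ is a $2$-sphere it has exactly $3f_0(L_i)-6$ edges, and a short inclusion--exclusion on vertex and edge counts shows $f_1(L_1\cup L_2)-3f_0(L_1\cup L_2)+6 = 3f_0(Y)-6-f_1(Y)$; as the left side is $\le 0$ and the right side is $\ge 0$, both vanish, so $f_1(Y)=3f_0(Y)-6$. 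Next, $\Delta$ flag forces the $L_i$ to be flag, hence $Y=L_1\cap L_2$ is flag; and because the graph of the $2$-sphere $L_1$ is planar, the full subcomplex $L_1[V(Y)]$ has at most $3f_0(Y)-6$ edges, which forces $Y$ and $L_1[V(Y)]$ to have the same graph and therefore (being flag) to coincide. Thus $Y=L_1[V(Y)]=L_2[V(Y)]$ is a \emph{full} subcomplex of both $L_1$ and $L_2$, with the maximal number of edges for its vertex count.

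The dichotomy then comes from the size of $V(Y)$. If $f_0(Y)=3$ then $V(Y)=V(F)$ and $Y$, sitting between $F$ and the simplex on $F$, equals the simplex on $F$ --- the first alternative. If $f_0(Y)\ge 4$, I claim $Y=L_1$ (and symmetrically $Y=L_2$). Suppose not, so $Y\subsetneq L_1$. The graph of $Y$ is then a connected planar graph with $3f_0(Y)-6$ edges, i.e.\ maximal planar, so, drawn inside $\|L_1\|=\mathbb{S}^2$ via $Y\subseteq L_1$, all of its faces are triangular disks. For such a face $T$, bounded by a $3$-cycle on $\{p,q,r\}$, flagness makes $\{p,q,r\}$ a $2$-face of $Y$, hence $\overline{\{p,q,r\}}$ is a $2$-face of $L_1$ and its realization is one of the two closed regions of $\mathbb{S}^2$ cut out by the cycle on $\{p,q,r\}$; it cannot be the region other than $T$, for that region contains in its interior a vertex of $Y$ (using $f_0(Y)\ge 4$) while $\overline{\{p,q,r\}}$ contains no vertex besides $p,q,r$. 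Hence $\overline{\{p,q,r\}}=\overline T$, so $\|T\|\subseteq\|Y\|$; doing this for every face gives $\|Y\|=\mathbb{S}^2=\|L_1\|$ and so $Y=L_1$, a contradiction. Therefore $Y=L_1$, and since $L_1=Y\subseteq L_2$ with both $2$-spheres, $L_1=L_2$. Finally, writing $L:=L_1=L_2$, non-adjacency of $v_1,v_2$ gives $\Star_\Delta(v_1)\cup\Star_\Delta(v_2)=\{v_1,v_2\}*L$, the simplicial $3$-sphere $\Sigma L$; as a closed $3$-manifold embedded in the connected $3$-manifold $\Delta$ it must be all of $\Delta$, so $\Delta$ is the suspension of $L$ with $v_1,v_2$ as suspension vertices.

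The main obstacle is the last step of the $f_0(Y)\ge 4$ case: a full subcomplex of a flag simplicial $2$-sphere that has the extremal edge count $3f_0-6$ and more than three vertices must be the whole sphere. Flagness is exactly what makes this work --- it guarantees that every empty triangular region of the maximal planar drawing is filled by a genuine $2$-face, forcing $Y$ to spread over all of $\mathbb{S}^2$. The remaining steps are comparatively routine given the rigidity facts already recorded (subgraphs of generically stress-free graphs are stress free, and stress-free graphs satisfy $e\le 3n-6$) and the standard planar-graph fact that a connected planar graph with $3n-6$ edges is maximal planar with all faces triangles.
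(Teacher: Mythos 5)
Your proof is correct and follows the same strategy as the paper's: use Lemma~\ref{restrictions} to force $f_1(Y) = 3f_0(Y)-6$ via a count on $L_1\cup L_2$, then combine planarity of the link graphs with flagness to deduce that $Y$ is either the simplex on $F$ or equals $L_1=L_2$. You bypass the explicit appeal to the gluing lemma (arguing the slack on each side is $\le 0$ and $\ge 0$ directly from stress-freeness) and spell out the topological step forcing $Y=L_1$ when $f_0(Y)\ge 4$, which the paper leaves implicit, but the core ideas are identical.
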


\begin{proof}
Assume without loss of generality that $v_1$ and $v_2$ have color 4.  Let $n_1$ (respectively $n_2$) denote the number of vertices in $\lk_{\Delta}(v_1)$ (resp. the link of $v_2$), and let $k$ denote the number of vertices in $\lk_{\Delta}(v_1) \cap \lk_{\Delta}(v_2)$.  Each $\lk_{\Delta}(v_i)$ is a (homology) $2$-sphere, and hence has $3n_i-6$ edges.  

By Lemma \ref{restrictions}, since $\overline{g}_2(\Delta) = 0$, the graph of $\Delta_{[3]}$ is generically $3$-stress free.  Further, since $\lk_{\Delta}(v_1)$ and $\lk_{\Delta}(v_2)$ share a $2$-face, the union $\Gamma:= \lk_{\Delta}(v_1) \cup \lk_{\Delta}(v_2)$ is also generically $3$-rigid by the gluing lemma (Lemma \ref{gluing_lemma}).  Since $\Gamma$ is a subcomplex of $\Delta_{[3]}$, it must also be generically $3$-stress free.  Therefore $\Gamma$ has $3(n_1+n_2-k)-6$ edges. Since the link of each $v_i$ has $3n_i-6$ edges, it follows that $\lk_{\Delta}(v_1) \cap \lk_{\Delta}(v_2)$ has $3k-6$ edges.  If $k=3$, then $\lk_{\Delta}(v_1) \cap \lk_{\Delta}(v_2)$ is the simplex on $F$.  Otherwise, $k \geq 4$ and the graph of $\lk_{\Delta}(v_1) \cap \lk_{\Delta}(v_2)$ is  planar (because $\lk_{\Delta}(v_1)$ and $\lk_{\Delta}(v_2)$ have planar graphs) with $k$ vertices and $3k-6$ edges.  Hence the graph of $\lk_{\Delta}(v_1) \cap \lk_{\Delta}(v_2)$ is the graph of a simplicial $2$-sphere.  

Since $\Delta$ is flag, $\lk_{\Delta}(v_i)$ is the restriction of $\Delta$ to the neighbors of $v_i$ for $i=1,2$.  Therefore, $\lk_{\Delta}(v_1) \cap \lk_{\Delta}(v_2)$ is the restriction of $\Delta$ to the vertices in $\lk_{\Delta}(v_1) \cap \lk_{\Delta}(v_2)$.  Hence $\lk_{\Delta}(v_1)$ and $\lk_{\Delta}(v_2)$ contain $\lk_{\Delta}(v_1) \cap \lk_{\Delta}(v_2)$ as an induced triangulated $2$-sphere.  Since $\lk_{\Delta}(v_1)$ and $\lk_{\Delta}(v_2)$ are $2$-spheres on their own, it must be the case that $\lk_{\Delta}(v_1) = \lk_{\Delta}(v_2)$. The statement follows.  
\end{proof}

\begin{lemma} \label{3-manifold-lemma2}
Let $\Delta$ be a simplicial $3$-manifold that is both flag and balanced.  Assume further that $\overline{g}_2(\Delta) = 0$.  Then $\Delta$ is isomorphic to $\mathcal{C}^*_4$.
\end{lemma}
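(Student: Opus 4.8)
The plan is to leverage Lemma~\ref{3-manifold-lemma1} together with a counting argument to pin down the structure of a flag, balanced simplicial $3$-manifold $\Delta$ with $\overline{g}_2(\Delta)=0$. First I would set up notation: since $\Delta$ is balanced of dimension $3$, its vertices split into four color classes $V_1,V_2,V_3,V_4$, say of sizes $n_1,n_2,n_3,n_4$. The key numerical input is the equality case of the balanced LBT: by Lemma~\ref{restrictions}, $\Delta_S$ is generically $3$-stress free for every $3$-element $S\subseteq[4]$, so $f_1(\Delta_S)=3f_0(\Delta_S)-6$ for each such $S$. Summing these four equations and comparing with $f_1(\Delta)$ (each edge lies in exactly two of the four restrictions, since an edge uses two of the four colors, hence misses two... wait, it uses $2$ colors so it lies in $\binom{2}{...}$ — more carefully, an edge with color set $\{i,j\}$ lies in $\Delta_S$ exactly when $\{i,j\}\subseteq S$, i.e. for the two choices of $S$ containing $\{i,j\}$) gives a clean relation among the $n_i$. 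Actually the cleanest route: $\overline{g}_2(\Delta)=0$ forces, via $h_2(\Delta)=\sum_{|T|=2}h_T(\Delta)$ and the Dehn–Sommerville-type identities for balanced spheres/manifolds, strong constraints; but I expect the argument in the paper to proceed more geometrically.

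The geometric heart: I would argue that every color class has size exactly $2$, which by Lemma~\ref{cross_polytope} (once we know all vertex links are cross-polytope boundaries) finishes the proof. To see $n_i=2$ for each $i$, fix a color, say $4$, and consider the vertices $v,w$ of color $4$ — if $n_4\ge 2$. Each vertex link $\lk_\Delta(v_1)$ is a flag $2$-sphere with $\overline{g}_2=0$ (this uses Lemma~\ref{links}), hence by the $d=3$ case of Theorem~\ref{balancedLBT-equality} — equivalently by a direct argument that a flag $2$-sphere with $g_2=0$ is $\mathcal C^*_3$ — each vertex link is the octahedron $\mathcal C^*_3$. (Here I would first dispatch the claim: a flag $2$-sphere on $n$ vertices has $g_2 = \binom{n-3}{2}\cdot(\text{something})$... more precisely $\overline g_2$ of a balanced $2$-sphere equals $2g_2$ and vanishes iff $n=6$, iff it is the octahedron.) Now that every vertex link is an octahedron, every vertex has exactly $6$ neighbors, two of each of the other three colors. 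Counting edges between color classes $i$ and $j$: every vertex of color $i$ has exactly $2$ neighbors of color $j$, so $2n_i = 2n_j$, i.e. all color classes have the same size $m$. It remains to show $m=2$.

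To show $m=2$, suppose $m\ge 3$ and derive a contradiction using Lemma~\ref{3-manifold-lemma1}. Pick two vertices $v_1,v_2$ of color $4$. Their links are both octahedra; I would show they must share a $2$-face: since color class $1$ has $m\ge 3$ vertices but each octahedral link $\lk(v_i)$ uses exactly $2$ of them, a pigeonhole/connectivity argument on the dual graph — or better, an argument that the links of the $m$ vertices of color $4$ together cover all of $\Delta_{[3]}$ and must overlap — produces two color-$4$ vertices whose links meet in a $2$-face. Then Lemma~\ref{3-manifold-lemma1} applies: either the two links meet in exactly a single $2$-simplex, or they are equal and $\Delta$ is the suspension over that common link with $v_1,v_2$ as suspension points. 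In the latter case $\Delta = \{v_1,v_2\}*\mathcal C^*_3 = \mathcal C^*_4$ up to the color classes having size $2$ — but we assumed $m\ge3$, contradiction, unless there are more color-$4$ vertices, which the suspension structure forbids. In the former case (links meet in one triangle), I would run the same argument across all $\binom{m}{2}$ pairs of color-$4$ vertices and count: the links of the $m$ color-$4$ vertices, each an octahedron with $6$ triangles, pairwise sharing at most one triangle, must cover the $2$-faces of $\Delta$ not containing a color-$4$ vertex, i.e. cover $\Delta_{[3]}$, and this overcount forces $m=2$. The main obstacle I anticipate is this last covering/counting step — making precise that the color-$4$ vertex links exactly tile $\Delta_{[3]}$ (a flag $2$-sphere, namely $\mathcal C^*_3$ itself by the same reasoning!) with controlled overlaps, and squeezing out $m=2$ from it. In fact once one observes $\Delta_{[3]}=\lk_\Delta(v)$ restricted appropriately is forced to be the octahedron on $6$ vertices, one gets $n_1=n_2=n_3=2$ directly, and then $m=2$ across the board; I would organize the final write-up around establishing $\Delta_{\{1,2,3\}}\cong\mathcal C^*_3$ first and then invoking Lemma~\ref{cross_polytope} applied to the whole complex.
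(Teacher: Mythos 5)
Your proposal contains a fatal error at the outset: you claim that each vertex link $\lk_\Delta(v)$ is forced to be the octahedron $\mathcal C^*_3$ because $\overline g_2(\lk_\Delta(v))=0$, and that ``$\overline g_2$ of a balanced $2$-sphere equals $2g_2$ and vanishes iff $n=6$.'' This is false. By the Dehn--Sommerville relations, \emph{every} simplicial $2$-sphere satisfies $h_2=h_1$, hence $g_2=0$ and $\overline g_2 = 2g_2 = 0$ regardless of the number of vertices. So the vanishing of $\overline g_2$ on a $2$-dimensional link carries no information whatsoever (this is exactly why the paper's base case $d=4$ is genuinely delicate and cannot be dispatched by Lemma~\ref{links}). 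Without the octahedral-link claim, your assertion that every vertex has exactly six neighbors collapses, and with it the deduction that all color classes have the same size. The remaining covering/overlap argument, which you yourself flag as the ``main obstacle,'' is also not carried out; it is not at all clear how to squeeze $m=2$ out of pairwise-disjoint $2$-face overlaps once the links are allowed to be arbitrary balanced flag $2$-spheres.

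The paper proceeds quite differently. It does \emph{not} try to identify the vertex links; it only uses that each link is some $2$-sphere (and hence has at least four $2$-faces). Assuming $|V_4|\ge 3$, it builds an auxiliary graph $H$ on $V_4$ with an edge between two vertices whose links share a $2$-face; by Lemma~\ref{3-manifold-lemma1} the shared part is then exactly one $2$-simplex, so the $H$-neighbors of $v\in V_4$ are in bijection with the $2$-faces of $\lk_\Delta(v)$. The crux is a rigidity argument showing $H$ is acyclic: along a putative minimal cycle $v_0,\ldots,v_\ell$, the stress-freeness of $\Delta_{[3]}$ forces each successive $\lk(v_j)$ to meet $\Gamma_j=\bigcup_{i<j}\lk(v_i)$ in exactly one $2$-simplex, but the closing vertex $v_\ell$ must meet both $\lk(v_0)$ and $\lk(v_{\ell-1})$ in distinct triangles, a contradiction. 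Acyclicity caps the edge count of $H$ at $|V_4|-1$, while the bijection above and the bound $f_2(\lk(v))\ge 4$ force at least $2|V_4|$ edges---absurd. If you want to repair your proposal, you should replace the incorrect octahedron-recognition step with an argument of this acyclicity-plus-counting type; establishing acyclicity of $H$ is exactly where the nontrivial use of generic $3$-stress-freeness enters, and it cannot be shortcut by Dehn--Sommerville considerations on the links.
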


\begin{proof}
It suffices to prove that $\Delta$ has two vertices of each color.  Assume to the contrary that $\Delta$ has at least three vertices of some color (w.l.o.g., say color $4$).   We construct a graph, $H$, on the set of vertices of color $4$ in $\Delta$  with an edge connecting two vertices whose links intersection contains a $2$-face.  Hence this intersection \textit{is} a $2$-face by Lemma \ref{3-manifold-lemma1}.

First we claim that $H$ does not contain any cycles.  Suppose to the contrary that $C = v_0, v_1, \ldots, v_{\ell}, v_0$ is a cycle of minimal length in $H$.  As in the proof of Lemma \ref{3-manifold-lemma1}, let $n_i$ denote the number of vertices in $\lk_{\Delta}(v_i)$.  Fix $0 < j \leq \ell$, and let $\Gamma_j:= \lk_{\Delta}(v_0) \cup \cdots \cup \lk_{\Delta}(v_{j-1})$.  

Let $m$ denote the number of vertices in $\Gamma_j$ and let $k$ denote the number of vertices in $\lk_{\Delta}(v_j) \cap \Gamma_j$.  As in the proof of Lemma \ref{3-manifold-lemma1}, the graph of $\Gamma_j$ is generically $3$-rigid and generically $3$-stress free.  Similarly, the graph of $\lk_{\Delta}(v_j)$ is generically $3$-rigid and generically $3$-stress free.  Therefore $\Gamma_j$ has $3m-6$ edges, $\lk_{\Delta}(v_j)$ has $3n_j-6$ edges, and hence $\lk_{\Delta}(v_j) \cap \Gamma_j$ has $3k-6$ edges. Since $\lk_{\Delta}(v_j)$ is a $2$-sphere, as in the proof of Lemma \ref{3-manifold-lemma1}, it follows that either $\lk_{\Delta}(v_j)$ intersects  $\Gamma_j$ along a single $2$-simplex or $\lk_{\Delta}(v_j)$ is contained in $\Gamma_j$.

We claim that it is impossible to have $\lk_{\Delta}(v_j) \subseteq \Gamma_j$. Indeed, if $j<\ell$, then by our assumption that $v_{j+1}$ is adjacent to $v_j$ in $H$, there exists a $2$-face, $F$, such that $\lk_{\Delta}(v_j)$ intersects $\lk_{\Delta}(v_{j+1})$ along the simplex on $F$.  If $\lk_{\Delta}(v_j) \subseteq \Gamma_j$, then there exists $i<j$ such that $F \in \lk_{\Delta}(v_i)$. This however is impossible as it would imply that a ridge $F$ of $\Delta$ is contained in at least three facets of $\Delta$, namely, $F\cup\{v_i\}$, $F\cup\{v_j\}$, and $F\cup\{v_{j+1}\}$. On the other hand, if $j=\ell$, and $\lk_{\Delta}(v_\ell) \subseteq \Gamma_\ell$, then since $f_2(\lk_\Delta(v_\ell))>2$ and since $\lk_{\Delta}(v_\ell)$ shares a single $2$-face with $\lk_{\Delta}(v_0)$ and a single $2$-face with $\lk_{\Delta}(v_{\ell-1})$, it must be that $\ell>2$ and that $\lk_{\Delta}(v_\ell)$ shares a face with $\lk_{\Delta}(v_i)$ for some $0<i<\ell-1$. Hence $v_\ell$ is adjacent to $v_{i}$ in $H$. This implies that the cycle $C$ could have been shortened, which contradicts the minimality of $C$.

 As a consequence, $\lk_{\Delta}(v_j)$ intersects $\Gamma_j$ along a single $2$-simplex for all $0<j \leq \ell$.  However, $\lk_{\Delta}(v_\ell)$ intersects $\lk_{\Delta}(v_0)$ and $\lk_{\Delta}(v_{\ell-1})$ along different $2$-faces,  and so $\lk_{\Delta}(v_\ell)$ intersects $\Gamma_{\ell}$ in more than a single $2$-face, which creates a contradiction.  

Therefore, $H$ does not contain any cycles.  The number of vertices in $H$ is equal to $|V_4|$; the number of vertices in $\Delta$ of color $4$.  Therefore, the number of edges in $H$ is at most $|V_4|-1$.  On the other hand, the number of edges in $H$ is equal to $\frac{1}{2} \sum_{v \in V_4}f_2(\lk_{\Delta}(v))$ as the neighbors of a vertex $v$ in $H$ are in one-to-one correspondence with the $2$-faces in $\lk_{\Delta}(v)$.  The link of each vertex in $\Delta$ is a $2$-sphere, and hence has at least $4$ two-dimensional faces.  Therefore, we have $$2|V_4| \leq \frac{1}{2} \sum_{v \in V_4}f_2(\lk_{\Delta}(v)) \leq |V_4|-1,$$ which is absurd. Thus, $\Delta$ can only have two vertices of each color and hence must be $\mathcal{C}^*_4$. \end{proof}

We need one more reduction to deal with the base case of $d=4$. Recall that $\widetilde{\chi}(\Delta):=\sum_{i=0}^{d-1}(-1)^i \beta_i(\Delta;\field)$ denotes the \textit{reduced Euler characteristic} of $\Delta$, and that the family of simplicial $3$-manifolds coincides with that of homology $3$-manifolds.
\begin{lemma} \label{orientable}
Let $\Delta$ be a balanced normal pseudomanifold of dimension $3$. If $\overline{g}_2(\Delta)=0$, then $\Delta$ is a simplicial manifold.
\end{lemma}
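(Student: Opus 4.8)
The plan is to push everything down to the vertex links of $\Delta$ and then invoke the classical Euler-characteristic formula for the $g_2$-number of a triangulated surface. Since $\dim\Delta=3$ we have $d=4$, so Lemma~\ref{links} applies verbatim: from $\overline{g}_2(\Delta)=0$ we conclude $\overline{g}_2(\lk_\Delta(v))=0$ for every vertex $v$ of $\Delta$. Now the link of a vertex in a normal $3$-pseudomanifold is itself a normal $2$-pseudomanifold (being the link of a face of dimension $0\le d-3=1$), hence a connected closed surface, and it is $2$-dimensional; thus $\overline{g}_2(\lk_\Delta(v))=2h_2(\lk_\Delta(v))-2h_1(\lk_\Delta(v))=2g_2(\lk_\Delta(v))$. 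Consequently $g_2(\lk_\Delta(v))=0$ for every vertex $v$.

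Next I would record the standard identity that for any triangulated connected closed surface $M$ one has $g_2(M)=h_2(M)-h_1(M)=3\bigl(2-\chi(M)\bigr)$; this drops straight out of the incidence relation $3f_2(M)=2f_1(M)$ together with the definitions of $h_1$ and $h_2$ for a $2$-dimensional complex. Since a connected closed surface satisfies $\chi(M)\le 2$ with equality precisely when $M$ is a $2$-sphere, the vanishing $g_2(M)=0$ forces $M\cong\mathbb{S}^2$. Applying this with $M=\lk_\Delta(v)$ shows that every vertex link of $\Delta$ is a simplicial $2$-sphere.

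Finally, a simplicial complex of dimension $d-1$ all of whose vertex links are homology $(d-2)$-spheres is a homology $(d-1)$-manifold: the link of any face $F$ with $|F|\ge 2$ is an iterated link $\lk_{\lk_\Delta(v)}(F\setminus\{v\})$ inside such a sphere, hence again a homology sphere of the correct dimension $d-|F|-1$. Therefore $\Delta$ is a homology $3$-manifold, and since the class of homology $3$-manifolds coincides with the class of simplicial $3$-manifolds, $\Delta$ is a simplicial manifold. I do not expect a genuine obstacle here once Lemma~\ref{links} is available; the only point that needs care is the use of the normality hypothesis to guarantee that a vertex link is an honest closed surface, which is exactly what lets us upgrade the numerical statement ``$g_2=0$'' to the topological conclusion ``$\lk_\Delta(v)\cong\mathbb{S}^2$.''
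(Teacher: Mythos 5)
Your proof is correct and takes essentially the same approach as the paper: both reduce to Lemma \ref{links}, observe that vertex links of a normal $3$-pseudomanifold are closed surfaces, and use the Dehn--Sommerville/Euler-characteristic identity $\overline{g}_2 = 2g_2 = 6(2-\chi)$ for a closed surface to force the links to be $2$-spheres. The only cosmetic difference is that the paper phrases this as a contradiction (a non-sphere link would give $\overline{g}_2(\lk_\Delta(v))>0$) while you argue directly.
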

\begin{proof} If $\Delta$ is not a simplicial manifold, then there is a vertex $v$ of $\Delta$ such that $\lk_\Delta(v)$ is a $2$-dimensional normal pseudomanifold that is not a sphere. Then $\lk_\Delta(v)$ is a 2-dimensional simplicial manifold with $\widetilde{\chi}(\lk_{\Delta}(v))<1$. Hence by the Dehn-Sommerville relations, $$\overline{g}_2(\lk_\Delta(v))=2\left(h_2(\lk_\Delta(v))-h_1(\lk_\Delta(v))\right)=-6(\widetilde{\chi}(\lk_\Delta(v)-1)>0.$$ This contradicts Lemma \ref{links}.
\end{proof} 

\noindent \textit{Proof of Theorem \ref{balancedLBT-equality}:} Let $\Delta$ be a balanced normal $(d-1)$-pseudomanifold with $d\geq 4$ and $\overline{g}_2(\Delta)=0$. We prove that $\Delta$ is a  stacked cross-polytopal sphere by induction on $d$. 

We start with the case of $d=4$. Then, by Lemma \ref{orientable}, $\Delta$ is a simplicial 3-manifold. Our proof in this case is by induction on the number of vertices. If $\Delta$ has a missing $3$-face, then by Lemma \ref{missing(d-1)face}, $\Delta$ is the balanced connected sum of balanced simplicial 3-manifolds $\Delta_1$ and $\Delta_2$ with $\overline{g}_2(\Delta_1)= \overline{g}_2(\Delta_2)=0$. By the induction hypothesis, $\Delta_1$ and $\Delta_2$ are stacked cross-polytopal spheres, and hence so is $\Delta$. By Lemma \ref{missing_faces}, we thus can assume that $\Delta$ is a flag complex. Lemma \ref{3-manifold-lemma2} then yields that $\Delta = \mathcal{C}^*_4$.


Assume now that $d>4$. Then by Lemma \ref{links}, for every vertex $v$ of $\Delta$, $\overline{g}_2(\lk_\Delta(v))=0$. Hence, by the induction hypothesis on $d$, all vertex links of $\Delta$ are stacked cross-polytopal spheres. We proceed by induction on the number of vertices of $\Delta$. If all vertex links of $\Delta$ are the boundaries of a cross-polytope, then by Lemma \ref{cross_polytope}, $\Delta$ itself is $\C^*_d$, and we are done. Otherwise, there is a vertex $v$ whose link is  the connected sum of at least two copies of $\C^*_{d-1}$. Then the link of $v$ has a missing $(d-2)$-face, $F'$. Let $F=F'\cup\{v\}$. Since, by Lemma \ref{missing_faces}, $\Delta$ has no missing $(d-2)$-faces, we infer that $F'$ is a face of $\Delta$, and hence that $F$ is a missing $(d-1)$-face of $\Delta$. Lemma \ref{missing(d-1)face} then guarantees that $\Delta$ is the balanced connected sum of simplicial $(d-1)$-manifolds $\Delta_1$ and $\Delta_2$ with $\overline{g}_2(\Delta_1)= \overline{g}_2(\Delta_2)=0$. By the induction hypothesis on the number of vertices, $\Delta_1$ and $\Delta_2$ are stacked cross-polytopal spheres, and hence so is $\Delta$. \hfill $\square$ \newline


Theorem \ref{balancedLBT-equality} raises a question of ``Which balanced manifolds have only stacked cross-polytopal spheres as their vertex links?" The answer turns out to be completely analogous to the non-balanced case and requires the following 
definition.  We refer to \cite{Walkup} and \cite{Kalai-rigidity} for the definition of the (non-balanced) Walkup class.
\begin{definition} The \textit{balanced Walkup class}, $\BH^d$, consists of balanced $(d-1)$-dimensional complexes that are obtained from the boundary complexes of $d$-dimensional cross-polytopes by successively applying the operations of balanced connected sums and balanced handle additions.
\end{definition}

Note that if $\Delta\in \BH^d$ then $|V_1(\Delta)|=|V_2(\Delta)|=\cdots=|V_d(\Delta)|$, and so $f_0(\Delta)$ is a multiple of $d$. Also, since vertex links of $\Delta_1\#\Delta_2$ (resp.~$\Gamma^\phi$) are connected sums of vertex links of $\Delta_1$ and $\Delta_2$ (resp.~$\Gamma$), it follows that for an arbitrary element $\Delta$ of $\BH^d$, all vertex links of $\Delta$ are stacked cross-polytopal spheres. If $d\geq 5$, then the converse also holds. More precisely, we have the following balanced analog of \cite[Theorem 8.3, Corollary 8.4]{Kalai-rigidity}:

\begin{theorem} \label{balanced8.3}
Let $\Delta$ be a balanced normal $(d-1)$-pseudomanifold with $d\geq 4$. If all vertex links of $\Delta$ are cross-polytopal spheres and if $\Delta$ has no missing $(d-2)$-faces, then $\Delta\in\BH^d$.
\end{theorem}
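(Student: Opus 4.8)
The plan is to mimic the proof of Kalai's Theorem 8.3 in \cite{Kalai-rigidity}, using the structural reductions already developed in this section. First I would set up an induction on the number of vertices of $\Delta$. If $\Delta$ has exactly $2d$ vertices, then by the discussion following Lemma \ref{cross_polytope} (or Proposition \ref{at_least_3d}) $\Delta=\C^*_d$ and we are done. So assume $f_0(\Delta)>2d$, so that not all vertex links are boundaries of cross-polytopes; since every vertex link is by hypothesis a stacked cross-polytopal sphere, there is a vertex $v$ whose link $\lk_\Delta(v)$ is the balanced connected sum of at least two copies of $\C^*_{d-1}$, and in particular $\lk_\Delta(v)$ has a missing $(d-2)$-face $F'$. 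Setting $F=F'\cup\{v\}$, the hypothesis that $\Delta$ has no missing $(d-2)$-faces forces $F'\in\Delta$ and hence $F$ is a missing $(d-1)$-face of $\Delta$.

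Next I would perform the cut-and-patch operation of Walkup along $\partial F$, exactly as in the proof of Lemma \ref{missing(d-1)face}: form $\Gamma$ by splitting $\Delta$ along the boundary of the missing facet $F$ and filling in the two resulting copies of $F$. As recalled there, $\Gamma$ is a balanced simplicial complex, and $\Delta$ is obtained from $\Gamma$ either by a balanced connected sum (if $\Gamma$ is disconnected, $\Gamma=\Delta_1\amalg\Delta_2$) or by a balanced handle addition (if $\Gamma$ is connected). In either case $\Gamma$ (or each of $\Delta_1,\Delta_2$) is again a balanced normal $(d-1)$-pseudomanifold with strictly fewer vertices than $\Delta$ -- in the handle case $f_0(\Gamma)=f_0(\Delta)+d$, so one must instead run the induction on a different parameter; I would therefore induct on $f_0(\Delta)-d\cdot(\text{number of connected-sum/handle summands in a minimal decomposition})$, or more simply induct on $g_1(\Delta)=f_0(\Delta)-d$ together with a secondary count, noting that a connected sum strictly decreases $g_1$ of each piece while a handle addition strictly decreases $g_1$. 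The key point is that the vertex links are unchanged by cut-and-patch away from $F$: for a vertex $w\notin F$, $\lk_\Gamma(w)=\lk_\Delta(w)$ unless $w\in F'$, and for $w\in F'$ the link is obtained from $\lk_\Delta(w)$ by a connected sum or handle addition of a $\C^*_{d-1}$-type move; either way $\lk_\Gamma(w)$ remains a stacked cross-polytopal sphere (using that the class of stacked cross-polytopal spheres is closed under connected sum). One then checks that $\Gamma$ still has no missing $(d-2)$-faces: a missing $(d-2)$-face of $\Gamma$ would descend to a missing $(d-2)$-face of $\Delta$ (or lie inside the filled facet $F$, but a $(d-2)$-subset of $F$ is a face of $\Delta$ and hence of $\Gamma$), contradiction. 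By the induction hypothesis, $\Gamma$ (or $\Delta_1$ and $\Delta_2$) lies in $\BH^d$, and therefore so does $\Delta$, since $\BH^d$ is by definition closed under balanced connected sums and balanced handle additions.

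The main obstacle I anticipate is the bookkeeping needed to make the induction well-founded in the handle-addition case, since a handle addition \emph{increases} the vertex count; one must identify the right monovariant. The standard fix (used by Kalai) is to observe that each handle addition or connected sum used to build $\Delta$ from cross-polytopes is recorded by $g_1$ and $\beta_1$ together: a handle addition adds $\binom{d}{2}$ edges and $d$ vertices hence increases $g_1$ by... (one computes) and also changes the first Betti number, while a connected sum decreases the number of connected summands. Concretely I would argue by induction on $g_1(\Delta)=f_0(\Delta)-d$: a balanced connected sum $\Delta=\Delta_1\#\Delta_2$ satisfies $g_1(\Delta)=g_1(\Delta_1)+g_1(\Delta_2)+d$ with $g_1(\Delta_i)\geq d$ (as $f_0(\Delta_i)\geq 2d$), so $g_1(\Delta_i)<g_1(\Delta)$; a balanced handle addition producing $\Delta$ from $\Gamma$ satisfies $g_1(\Gamma)=g_1(\Delta)-d<g_1(\Delta)$. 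A secondary subtlety is verifying that the cut-and-patch output is genuinely balanced with the inherited coloring and that the "no missing $(d-2)$-face" hypothesis is preserved, but both follow from the same observation used in Lemma \ref{missing(d-1)face} that the vertices of $F$ receive distinct colors. With these points in place the argument closes.
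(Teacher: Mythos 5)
Your overall architecture matches the paper's: find a vertex whose link is a nontrivial stacked connected sum, extract from it a missing $(d-1)$-face $F$, cut-and-patch along $F$, and induct. But there are two concrete problems.

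First, your termination measure is wrong. You propose inducting on $g_1(\Delta)=f_0(\Delta)-d$ and assert that if $\Delta$ is obtained from $\Gamma$ by a balanced handle addition then $g_1(\Gamma)=g_1(\Delta)-d<g_1(\Delta)$. The sign is backwards: a handle addition \emph{removes} $d$ vertices when it identifies the two facets, so cut-and-patch \emph{adds} $d$ vertices to the original complex. Thus $f_0(\Gamma)=f_0(\Delta)+d$ and $g_1(\Gamma)=g_1(\Delta)+d>g_1(\Delta)$. Your induction does not terminate. (You already flagged the handle case as the danger spot, but the proposed fix doesn't actually resolve it, and the vaguer alternatives you sketch aren't carried far enough to check.) The paper's fix is a double induction on $(\overline{g}_2,f_0)$: a handle removal strictly decreases $\overline{g}_2$ by $4\binom{d}{2}$ (even though it raises $f_0$), while splitting a connected sum preserves $\overline{g}_2\geq 0$ additively and strictly decreases $f_0$ of each piece, so the lexicographic pair is a valid monovariant.

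Second, your justification that the vertex links of $\Gamma$ (or of $\Delta_1,\Delta_2$) remain stacked cross-polytopal spheres goes in the wrong direction. For a vertex $w\in F$, the relationship is $\lk_\Delta(w)=\lk_{\Delta_1}(w)\#\lk_{\Delta_2}(w)$ (or, in the handle case, $\lk_\Delta(w)=\lk_\Gamma(w_1)\#\lk_\Gamma(w_2)$ where $w_1,w_2$ are the preimages of $w$). So the links after cut-and-patch are \emph{summands} of the original link, not connected sums with something new. Closure under connected sum is the wrong tool; what you need is the reverse implication that the summands of a stacked cross-polytopal sphere are again stacked cross-polytopal spheres, which is exactly Lemma~\ref{easy} and is what the paper invokes at this step.
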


\begin{corollary} \label{balanced8.4}
Let $\Delta$ be a balanced normal $(d-1)$-pseudomanifold with $d\geq 5$. If all vertex links of $\Delta$ are cross-polytopal spheres then $\Delta\in\BH^d$.
\end{corollary}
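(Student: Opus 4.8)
The plan is to deduce Corollary~\ref{balanced8.4} from Theorem~\ref{balanced8.3}: by that theorem it suffices to show that a balanced normal $(d-1)$-pseudomanifold $\Delta$ with $d\geq 5$ all of whose vertex links are stacked cross-polytopal spheres has no missing $(d-2)$-faces. This is the balanced counterpart of the way \cite[Corollary~8.4]{Kalai-rigidity} is obtained from \cite[Theorem~8.3]{Kalai-rigidity}, with the cross-polytope playing the role of the simplex.

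The one new ingredient I would isolate is a lemma describing the missing faces of a stacked cross-polytopal sphere: \emph{every missing face of $\ST^\times(n,m)$ has dimension $1$ or $m$.} I would prove this by induction on the number of cross-polytopal summands. In the base case the complex is $\C^*_{m+1}$, whose missing faces are exactly the $m+1$ diagonal pairs $\{u_i,v_i\}$, all of dimension $1$. For the inductive step, write $\ST^\times(n,m)=A\#_\varphi B$ with $A\cong\C^*_{m+1}$ and $B$ a smaller stacked cross-polytopal $m$-sphere, the connected sum taken along facets identified to a single facet $G$. Given a missing face $F$ of $A\#_\varphi B$, there are three cases. If $F\subseteq V(A)$, then either $F=G$, a missing $m$-face, or every proper subset of $F$ is a face of $A$, so $F$ is a missing face of $A$, of dimension $1$. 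If $F\subseteq V(B)$, then either $F=G$ or $F$ is a missing face of $B$, of dimension $1$ or $m$ by induction. Finally, if $F$ contains a vertex $a\in V(A)\setminus G$ and a vertex $b\in V(B)\setminus G$, then $\{a,b\}$ is a face of neither $A$ nor $B$, hence not a face of $A\#_\varphi B$, so minimality of the non-face $F$ forces $F=\{a,b\}$, of dimension $1$. The bookkeeping in the last case — together with the verification in the first case that a $V(A)$-supported non-face other than $G$ really is a missing face of $A$ — is the only slightly delicate point, but it is routine.

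Granting the lemma, the corollary follows quickly. Suppose toward a contradiction that $F$ is a missing $(d-2)$-face of $\Delta$, so $|F|=d-1\geq 4$. Fix a vertex $v\in F$ and set $F':=F\setminus\{v\}$. I claim $F'$ is a missing $(d-3)$-face of $\lk_\Delta(v)$: indeed $F'\cup\{v\}=F\notin\Delta$ shows $F'\notin\lk_\Delta(v)$, while for every proper subset $\sigma\subsetneq F'$ we have $\sigma\cup\{v\}\subsetneq F$, hence $\sigma\cup\{v\}\in\Delta$ and $\sigma\in\lk_\Delta(v)$. But $\lk_\Delta(v)$ is a stacked cross-polytopal $(d-2)$-sphere, so by the lemma its missing faces have dimension $1$ or $d-2$; since $d\geq 5$ we have $1<d-3<d-2$, a contradiction. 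Hence $\Delta$ has no missing $(d-2)$-faces, and Theorem~\ref{balanced8.3} gives $\Delta\in\BH^d$.

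The hypothesis $d\geq 5$ enters exactly once, to guarantee $d-3\notin\{1,d-2\}$; for $d=4$ the reduction genuinely fails, since a stacked cross-polytopal $2$-sphere does contain missing edges (which is why the $d=4$ case of Theorem~\ref{balancedLBT-equality} needed separate treatment), and here we simply do not encounter it. Apart from the missing-face lemma, the argument uses no rigidity input beyond what is already packaged into Theorem~\ref{balanced8.3}, so I expect the write-up to be short, the combinatorial step in the connected-sum analysis being the only — and minor — obstacle.
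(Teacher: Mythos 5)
Your proof is correct and follows essentially the same route as the paper: both deduce the corollary from Theorem~\ref{balanced8.3} by observing that a missing $(d-2)$-face of $\Delta$ would produce a missing $(d-3)$-face in some vertex link $\lk_\Delta(v)$, which is impossible because stacked cross-polytopal $(d-2)$-spheres have missing faces only in dimensions $1$ and $d-2$, and $d\geq 5$ puts $d-3$ strictly between them. The only difference is that the paper states the fact about missing faces of stacked cross-polytopal spheres as a bare observation, while you supply a complete (and correct) inductive proof of it via the connected-sum decomposition.
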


Since the proofs are also very similar to those in \cite{Kalai-rigidity}, we only sketch the main ideas. To derive Corollary \ref{balanced8.4} from Theorem \ref{balanced8.3}, observe that stacked cross-polytopal spheres of dimension $d-2$ only have missing faces of dimension 1 and $d-2$. Consequently, if $\Delta$ satisfies conditions of Corollary \ref{balanced8.4}, then it has no missing $(d-2)$-faces (as existence of such a missing face $F$ would force a missing $(d-3)$-face in $\lk_\Delta(v)$ for $v\in F$), and hence $\Delta\in \BH^d$ by Theorem \ref{balanced8.3}.  \hfill $\square$

The proof of Theorem  \ref{balanced8.3} relies on the following easy fact. We leave its verification to our readers and remark that its non-balanced counterpart can be found in \cite[Lemma 4.8]{BagchiDatta-LBT}  
\begin{lemma} \label{easy} Let $\Delta_1$ and $\Delta_2$ be balanced normal $(d-1)$-pseudomanifolds with $d\geq 3$, and let $\Delta=\Delta_1\#\Delta_2$ be obtained from $\Delta_1$ and $\Delta_2$ by the balanced connected sum. If $\Delta$ is a stacked cross-polytopal sphere, then so are $\Delta_1$ and $\Delta_2$.
\end{lemma}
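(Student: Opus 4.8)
The plan is to reduce the statement to a structural description of the missing $(d-1)$-faces of a stacked cross-polytopal sphere and of how such a sphere breaks into copies of $\C^*_d$. For $d\geq 4$ there is a quick route I would also mention: since $h_1$ and $h_2$ are additive under balanced connected sum, $\overline{g}_2(\Delta)=\overline{g}_2(\Delta_1)+\overline{g}_2(\Delta_2)$; because $\Delta$ is a stacked cross-polytopal sphere we have $\overline{g}_2(\Delta)=0$, and both summands are nonnegative by Theorem~\ref{balancedLBT}, so $\overline{g}_2(\Delta_1)=\overline{g}_2(\Delta_2)=0$, whereupon Theorem~\ref{balancedLBT-equality} identifies each $\Delta_i$ as a stacked cross-polytopal sphere. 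For $d=3$ this gives nothing ($\overline{g}_2=2g_2\equiv 0$ for every $2$-sphere), so the bulk of the proof is a combinatorial argument uniform in $d\geq 3$.

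The combinatorial argument would proceed in the following steps. First, $\C^*_d$ is a flag complex, so its only missing faces are the $d$ missing edges $\{u_i,v_i\}$; in particular it has no missing $(d-1)$-face, and moreover removing any one facet from $\C^*_d$ creates no missing $(d-1)$-face other than that facet itself. Second, fix a realization of the stacked cross-polytopal sphere $\Delta$ as an iterated balanced connected sum of copies $P_1,\dots,P_m$ of $\C^*_d$; the successive gluings endow $\Delta$ with an associated tree $T$ (vertices the $P_i$, one edge per connected-sum operation). I would prove by induction on $m$, using the first step, that the missing $(d-1)$-faces of $\Delta$ are exactly the gluing facets, hence are in bijection with the edges of $T$: writing $\Delta=\Sigma\#_H\C^*_d$, a missing $(d-1)$-face $G$ of $\Delta$ cannot straddle the two sides (else a proper subset of $G$ would be a missing edge), so either $G=H$, giving the new edge, or $G$ is a missing $(d-1)$-face of $\Sigma$, handled by induction. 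Third, given $\Delta=\Delta_1\#_{\varphi}\Delta_2$ with the two identified facets forming the set $G$, I would observe that $G$ is a missing $(d-1)$-face of $\Delta$ (all proper subsets survive as faces, but $G$ is deleted and cannot reappear since the two glued facets were maximal), hence $G$ corresponds to an edge $e$ of $T$. Deleting $e$ splits $T$ into subtrees $T_1,T_2$; re-ordering the connected-sum operations so as to assemble $T_1$ and $T_2$ separately shows that each sub-assembly is itself a stacked cross-polytopal sphere, and that the (canonical) cut-and-patch of $\Delta$ along $\partial G$ produces precisely the disjoint union of these two sub-assemblies. Since $\Delta=\Delta_1\#\Delta_2$ means exactly that this cut-and-patch yields $\Delta_1\sqcup\Delta_2$, we conclude $\{\Delta_1,\Delta_2\}$ equals this pair, so $\Delta_1$ and $\Delta_2$ are stacked cross-polytopal spheres.

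The main obstacle is the bookkeeping linking the abstract decomposition $\Delta=\Delta_1\#\Delta_2$ to the tree $T$: one must check that although different orders of forming the connected sums may yield different trees, the edge set is always realized by the missing $(d-1)$-faces (the only feature used), and that when the missing face $G$ is a gluing facet the two pieces of the cut-and-patch are exactly the sub-assemblies indexed by the two components of $T\setminus e$ — the point being that a facet $H\neq G$ of an intermediate sum survives intact in exactly one side of the cut along $\partial G$. Once these are pinned down the remaining verifications are routine, and the $d\geq 4$ shortcut via Theorems~\ref{balancedLBT} and \ref{balancedLBT-equality} serves as an independent check.
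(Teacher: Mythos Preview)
The paper does not actually prove Lemma~\ref{easy}: it states the lemma, says ``We leave its verification to our readers,'' and points to the non-balanced analog \cite[Lemma~4.8]{BagchiDatta-LBT}. So there is no proof in the paper to compare against directly. Your combinatorial argument---classifying the missing $(d-1)$-faces of a stacked cross-polytopal sphere as exactly the gluing facets via induction on the number of cross-polytopes, then identifying the particular missing face $G$ coming from $\Delta=\Delta_1\#\Delta_2$ with an edge of the tree, and finally using that cut-and-patch along $G$ is the inverse of the connected sum---is correct and is precisely the balanced adaptation of the Bagchi--Datta argument the paper is implicitly invoking. The verifications you flag (that $\C^*_d$ is flag, that removing a facet of $\C^*_d$ creates no new missing $(d-1)$-face other than itself, that a missing $(d-1)$-face cannot straddle the two sides of $\Sigma\#_H\C^*_d$, and that the connected-sum operations can be reordered so that the cut along $G$ is performed last) are all routine as you say. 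Your $d\geq 4$ shortcut via $\overline{g}_2(\Delta)=\overline{g}_2(\Delta_1)+\overline{g}_2(\Delta_2)$ together with Theorems~\ref{balancedLBT} and~\ref{balancedLBT-equality} is also valid and, importantly, not circular: Theorem~\ref{balancedLBT-equality} is proved in the paper before Lemma~\ref{easy} is stated and does not rely on it.
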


\smallskip\noindent\textit{Proof of Theorem \ref{balanced8.3}: \ } If all vertex links of $\Delta$ are the boundaries of the cross-polytope, then so is $\Delta$ (see Lemma \ref{cross_polytope}), and we are done. Otherwise, there exists a vertex $v$ of $\Delta$ such that $\lk_\Delta(v)$ is the connected sum of at least two copies of $\C^*_{d-1}$, and hence $\lk_\Delta(v)$ has a missing $(d-2)$-face, $F'$. Let $F=F'\cup\{v\}$. Since $\Delta$ has no missing $(d-2)$-faces, $F'$ is a face of $\Delta$, and we conclude that $F$ is a missing $(d-1)$-face of $\Delta$. As in the proof of Lemma \ref{missing(d-1)face}, cut along $F$ and patch with two $(d-1)$-simplices. It follows from our assumptions on $\Delta$ and from Lemma \ref{easy} that all vertex links of the resulting simplicial manifold are cross-polytopal spheres. The result then follows by the double induction on $f_0$ and $\overline{g}_2$. \hfill $\square$

\smallskip

Since (as an easy computation shows) adding a balanced handle to a balanced $(d-1)$-dimensional complex causes $\overline{g}_2$ to increase by $4\binom{d}{2}$, the elements of $\BH^d$ (for $d-1\geq 3$) satisfy the inequality of Conjecture \ref{b1_conj} as equality. This observation along with \cite[Theorem 5.2]{Novik-Swartz-buchsbaum}, \cite[Theorem 1.14]{Bagchi}, and \cite[Corollary 3.15]{Datta-Murai} suggest the following strengthening of Conjecture \ref{b1_conj}.

\begin{conjecture} \label{b1_conj_rev}
Let $\Delta$ be a balanced connected $\field$-homology manifold that is orientable over $\field$ of dimension $d-1 \geq 3$.  Then $\overline{g}_2(\Delta) \geq 4\binom{d}{2} \beta_1(\Delta;\field)$ and equality holds if and only if $\Delta\in\BH^d$.
\end{conjecture}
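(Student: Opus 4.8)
\textit{Proof proposal for Conjecture \ref{b1_conj_rev}.}
The plan is to follow the same two-stage strategy that underlies the treatment of the equality case in the classical setting (\cite{Novik-Swartz-buchsbaum, Datta-Murai}) and the balanced equality case of Theorem \ref{balancedLBT-equality}. First I would establish that the bound $\overline{g}_2(\Delta)\geq 4\binom{d}{2}\beta_1(\Delta;\field)$ holds, which would require proving (the orientable case of) Conjecture \ref{b1_conj}; I would attempt this by the rigidity-theoretic route of equation (\ref{g2_as_a_sum}), reducing to the $3$-colored restrictions $\Delta_S$ and trying to find enough linearly independent stresses on $\Delta_S$ coming from the fundamental class of $\Delta$, in analogy with how $g_2(\Delta)\geq\binom{d+1}{2}\beta_1$ is proven via the socle/rigidity argument of \cite{Novik-Swartz-buchsbaum}. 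Second, assuming the inequality, I would characterize when equality holds, which is the statement of real interest here.

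For the equality characterization, the ``if'' direction is the easy observation already recorded in the excerpt: starting from a copy of $\C^*_d$ (for which $\overline{g}_2=0$ and $\beta_1=0$ by Example \ref{C*d-example}), a balanced connected sum leaves $\overline{g}_2$ and $\beta_1$ unchanged, while a balanced handle addition increases $\overline{g}_2$ by exactly $4\binom{d}{2}$ and increases $\beta_1$ by exactly $1$; hence every $\Delta\in\BH^d$ satisfies the bound with equality. For the ``only if'' direction, suppose $\overline{g}_2(\Delta)=4\binom{d}{2}\beta_1(\Delta;\field)$. I would first reduce to the case $\beta_1=0$: if $\beta_1(\Delta;\field)>0$, then $\|\Delta\|$ admits a nontrivial free $\Z/2$ (or free cyclic) quotient structure, or more directly one can excise a handle. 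Concretely, following Walkup's cut-and-patch operation as in Lemma \ref{missing(d-1)face} and \cite[Lemma 3.3]{BagchiDatta-LBT}, I would locate a missing $(d-1)$-face $F$ of $\Delta$ whose removal decreases $\beta_1$ --- such a face exists because the equality case forces, via the argument of Lemma \ref{missing_faces} applied to $\Delta$ (or to $\Delta^t$ as in the proof of Theorem \ref{t-covering}), that $\Delta$ is not flag whenever $\beta_1>0$ --- cut along $F$ to obtain $\Gamma$, which is balanced, has $\overline{g}_2(\Gamma)=\overline{g}_2(\Delta)-4\binom{d}{2}$ and $\beta_1(\Gamma;\field)=\beta_1(\Delta;\field)-1$, and is thus again in the equality case. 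By induction on $\beta_1$ this reduces us to $\beta_1(\Delta;\field)=0$, i.e.\ $\overline{g}_2(\Delta)=0$, where Theorem \ref{balancedLBT-equality} identifies $\Delta$ as a stacked cross-polytopal sphere, which lies in $\BH^d$. Reassembling $\Delta$ from the handle additions and connected sums used in the reduction shows $\Delta\in\BH^d$.

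The main obstacle is clearly the inequality itself (Conjecture \ref{b1_conj}), which the excerpt explicitly leaves open: the classical proof of $g_2\geq\binom{d+1}{2}\beta_1$ in \cite{Novik-Swartz-buchsbaum} uses the structure of the Stanley--Reisner ring modulo a linear system of parameters together with the socle, and it is not obvious that the refined (color-graded) version carries enough information to yield the factor $4\binom{d}{2}$ rather than something weaker; the rigidity reduction in (\ref{g2_as_a_sum}) only gives $\overline{g}_2=\frac{2}{d-2}\sum_{|S|=3}g_2(\Delta_S)$, and one would need, for a positive fraction of the triples $S$, a lower bound on $g_2(\Delta_S)$ in terms of $\beta_1$, but the restrictions $\Delta_S$ are typically only $2$-pseudomanifolds and need not be orientable or even manifolds, so Theorem \ref{thm:Novik-Swartz-lower-bound} does not apply to them directly. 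A secondary difficulty is ensuring the cut-and-patch reduction can always be performed in a \emph{balanced} and \emph{$\beta_1$-decreasing} way: one must argue that the missing $(d-1)$-face supplied by the non-flatness of $\Delta$ is homologically essential, which in the classical case follows from a careful analysis of which handle additions keep $\overline{g}_2$ on the extremal line. For these reasons I expect that a complete proof of Conjecture \ref{b1_conj_rev} would have to await a proof of Conjecture \ref{b1_conj}, and the contribution here is the reduction showing that the equality characterization follows from the inequality together with Theorem \ref{balancedLBT-equality}.
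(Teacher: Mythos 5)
The statement you are addressing is posed in the paper as an open conjecture, so there is no proof to compare against; the only part the authors establish is the ``if'' direction of the equality case, namely that every $\Delta\in\BH^d$ achieves $\overline{g}_2(\Delta)=4\binom{d}{2}\beta_1(\Delta;\field)$, which follows from the additivity of $\overline{g}_2$ under balanced connected sum and the fact that a balanced handle addition increases $\overline{g}_2$ by exactly $4\binom{d}{2}$ and $\beta_1$ by exactly $1$. Your treatment of this direction reproduces that observation correctly, and your closing assessment --- that a complete proof must wait on Conjecture~\ref{b1_conj} --- is accurate.

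That said, your sketch of the ``only if'' direction has a concrete error beyond the acknowledged missing inequality. You invoke Lemma~\ref{missing_faces} to argue that $\Delta$ cannot be flag when $\beta_1>0$, but Lemma~\ref{missing_faces} requires the hypothesis $\overline{g}_2(\Delta)=0$, and that hypothesis fails exactly in the regime $\beta_1(\Delta;\field)>0$ where you want to apply it (there you are assuming $\overline{g}_2(\Delta)=4\binom{d}{2}\beta_1>0$). Nothing in the paper's rigidity machinery produces a missing $(d-1)$-face once $\overline{g}_2$ is strictly positive, and even if one existed you would further need it to be non-separating (so that cutting along it undoes a handle rather than splitting off a connected summand) and compatible with the coloring --- neither of which follows from the tools at hand. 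The strategy that works in the non-balanced setting (the references cited just before Conjecture~\ref{b1_conj_rev}) goes through control of vertex links --- establishing that all links are stacked and invoking the Walkup-class characterization --- and the balanced analog of that route would aim for $\overline{g}_2(\lk_\Delta(v))=0$ and an appeal to Corollary~\ref{balanced8.4}; but that step again presupposes a resolved form of Conjecture~\ref{b1_conj}, with its equality analysis, in one lower dimension. So the cut-and-patch induction as you have written it does not close, even conditionally on the inequality.
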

\section{A balanced generalized lower bound conjecture}

\subsection{History}

McMullen and Walkup \cite{McMullen-Walkup} proposed the following Generalized Lower Bound Conjecture (GLBC, for short) as an extension of the classic Lower Bound Theorem.  

\begin{conjecture}
Let $P$ be a simplicial $d$-polytope.  Then 
\begin{enumerate}
\item $g_j(P) \geq 0$ for all $1 \leq j \leq \halffloor$, and
\item for any $1 \leq r \leq \halffloor$, the following are equivalent: 
\begin{enumerate}
\item $g_r(P)=0$;
\item $P$ is $(r-1)$-stacked; i.e., there exists a triangulation of $P$, all of whose faces of dimension at most $d-r$ are faces of $P$. 
\end{enumerate}
\end{enumerate}
\end{conjecture}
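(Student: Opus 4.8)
The statement has an inequality half, part (1), and an equality half, part (2), and I would attack these by very different routes. Part (1) is immediate from the $g$-theorem quoted above: the $g$-vector of a simplicial $d$-polytope is an $M$-vector, and $M$-vectors have nonnegative entries, so $g_j(P)\ge 0$ for all $1\le j\le\halffloor$. (If one prefers an argument that does not quote the full $g$-theorem: write $g_j(P)=\dim_{\field}A_j-\dim_{\field}A_{j-1}$, where $A$ is the quotient of the Stanley--Reisner ring $\field[\partial P]$ by a generic linear system of parameters — here $\dim_\field A_j=h_j(\partial P)$ because a sphere is Cohen--Macaulay — and invoke the Hard Lefschetz theorem for the projective toric variety of $P$ to see that multiplication by a generic linear form $A_{j-1}\to A_j$ is injective for $j\le d/2$, whence $g_j\ge 0$.) For part (2) I would prove the two implications separately.

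\textbf{The implication $(b)\Rightarrow(a)$.} This is the original result of McMullen--Walkup, and the argument is enumerative. Suppose $\Delta$ is a triangulation of the $d$-ball $\|P\|$ with $\skel_{d-r}(\Delta)=\skel_{d-r}(\partial P)$, so that every interior face of $\Delta$ has dimension at least $d-r+1$; equivalently, $f_{i-1}(\Delta)=f_{i-1}(\partial P)$ for all $i\le d-r+1$. Now combine: (i) the Dehn--Sommerville-type relations for a triangulated $d$-ball, which express the $g$-numbers of the boundary sphere $\partial P=\partial\Delta$ in terms of the $h$-numbers of $\Delta$ and of its interior; and (ii) Cohen--Macaulayness of $\Delta$, which after a generic Artinian reduction makes both the $h$-numbers and the interior $h$-numbers of $\Delta$ nonnegative. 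The coincidence of $f$-numbers in low degrees forces the low-degree $h$-numbers of $\Delta$ to be determined by those of $\partial P$, and feeding this into (i) collapses $g_j(\partial P)$ to $0$ for every $r\le j\le\halffloor$; in particular $g_r(P)=0$. (There is a slicker algebraic phrasing: the inclusion $\partial\Delta\hookrightarrow\Delta$ realizes $\field[\partial P]$ as a quotient of $\field[\Delta]$ whose defining relations all lie in degrees $\ge d-r+2$, which already pins down $h_r(\partial P)=h_{r-1}(\partial P)$ after Artinian reduction.)

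\textbf{The implication $(a)\Rightarrow(b)$, and the main obstacle.} This is the part that stayed open for decades and was completed by Murai--Nevo \cite{Murai-Nevo}; I expect it to be by far the hardest step. Starting from $g_r(P)=0$, the $M$-vector condition propagates this to $g_j(P)=0$ for all $r\le j\le\halffloor$. Passing to the generic Artinian reduction $A=\field[\partial P]/\Theta$ with a generic linear form $\omega$, Hard Lefschetz together with $g_r=0$ forces $\omega\colon A_{r-1}\to A_r$ to be an isomorphism, so $A/\omega A$ is concentrated in degrees $\le r-1$. The difficult task is to convert this purely algebraic vanishing into a geometric object: one must exhibit an honest triangulation of $\|P\|$ whose $(d-r)$-skeleton equals that of $\partial P$. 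Following Murai--Nevo (and building on the treatment of $r$-stacked manifolds in \cite{Bagchi-Datta-stellated, Murai-Nevo2}), the candidate triangulation is constructed canonically from $\partial P$ by adjoining to $\skel_{d-r}(\partial P)$ a prescribed set of higher-dimensional ``filling'' faces dictated by the socle/Lefschetz data of $A$, and the crux is to prove that this candidate is a genuine homology $d$-ball — and then a PL ball — with boundary $\partial P$. The remainder of the argument is bookkeeping with $h$-vectors and standard facts about generic Artinian reductions of Cohen--Macaulay complexes, but this verification that the algebra is actually witnessed by a ball is where the real difficulty resides.
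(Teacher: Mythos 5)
The paper does not prove this statement; it records it as the McMullen--Walkup Generalized Lower Bound Conjecture and immediately notes that it is now a theorem, attributing part (1) to the $g$-theorem, the implication $(b)\Rightarrow(a)$ to McMullen--Walkup, and $(a)\Rightarrow(b)$ to Murai--Nevo. Your proposal correctly reproduces exactly this decomposition and attribution, and your sketches of the three pieces are consistent with the cited arguments, so you are in agreement with the paper's own (citation-level) treatment.
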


This conjecture is now a theorem: Part 1 of the GLBC follows from the $g$-theorem;  McMullen and Walkup \cite{McMullen-Walkup} showed that $g_r(P) = 0$ whenever $P$ is an $(r-1)$-stacked simplicial polytope;  finally, Murai and Nevo \cite{Murai-Nevo} completed the proof of the GLBC by showing that if $g_r(P) = 0$, then $P$ must be $(r-1)$-stacked. 

In this section, we propose an extension of these results to the family of balanced simplicial polytopes.  First, we require a definition that extends the notion of an $(r-1)$-stacked polytope to the family of balanced polytopes.  

\subsection{$(r-1)$-stackedness for balanced spheres and manifolds}
We start by defining a certain subclass of regular CW complexes. 

\begin{definition} \label{cross-polytopal-complex} A $d$-dimensional regular CW complex $\X$ is called a \textit{cross-polytopal} complex if the $(d-1)$-dimensional skeleton of $\X$ is a simplicial complex, the boundary of every $d$-dimensional cell of $\X$ coincides with $\C^*_d$, and no two $d$-dimensional cells have the same vertex set (equivalently, no two $d$-cells share their entire boundary).
\end{definition}

If $\X$ is a $d$-dimensional simplicial complex or a cross-polytopal complex, then we say that $\X$ is a \textit{$\field$-homology manifold with boundary} if (i) for each $p\in\|\X\|$, the pair $(\|\X\|,\|\X\|-p; \field)$ has the relative homology of a $d$-ball or a $d$-sphere, and (ii) \textit{the boundary complex of $\X$}, \[\partial\X:=\{F\in \X \,: \, H_*(\|\X\|,\|\X\|-p; \field)\cong 0 \mbox{ for $p$ in the relative interior of $F$}\}\cup\{\emptyset\},\] is a closed homology $(d-1)$-manifold. Similarly, we say that $\X$ is a \textit{homology $d$-ball} if (i) $\X$ is a homology manifold with boundary, (ii) $H_*(\|\X\|; \field)\cong 0$, and (iii) the boundary complex of $\X$ is a homology $(d-1)$-sphere. Note that the boundary complex of $\X$ is contained in the $(d-1)$-skeleton of $\X$, and hence is a simplicial complex. If $\X$ is a homology manifold with boundary, then every face of $\X$ that does not belong to $\partial\X$ is called an \textit{interior} face.

\begin{definition} Let $\Delta$ be a balanced $(d-1)$-dimensional homology sphere (resp.~connected homology manifold without boundary), and let $1\leq r \leq d$. We say that $\Delta$ has the \textit{balanced $(r-1)$-stacked property} if there exists a cross-polytopal complex $\X$ such that (i) $\X$ is a homology $d$-ball (resp.~homology $d$-manifold with boundary), (ii) the boundary complex of $\X$ is $\Delta$, and (iii) all faces of $\X$ of dimension at most $d-r$ are faces of $\Delta$. Such a CW complex $\X$ is called an \textit{$(r-1)$-stacked cross-polytopal decomposition of $\Delta$}.
\end{definition}

\begin{remark}  \label{alternate}
An alternate definition of the balanced $(r-1)$-stacked property states that there exists a $d$-dimensional simplicial complex $\Gamma$ such that (i) $\Gamma$ is a homology $d$-ball (or $d$-manifold with boundary); (ii) $\Gamma$ is balanced; (iii) the boundary complex of $\Gamma$ is $\Delta$; (iv) all faces of $\Gamma$ of dimension at most $d-r$ that do not use a vertex of color $d+1$ are faces of $\Delta$; and (v) if $u$ is a vertex of color $d+1$, then $u$ belongs to the interior of $\Gamma$ and $\lk_{\Gamma}(u)$ is isomorphic to $\mathcal{C}^*_d$.  

Indeed, if $\mathcal{X}$ is an $(r-1)$-stacked cross-polytopal decomposition of $\Delta$, we can perform a stellar subdivision of each of its cross-polytopal faces, which will introduce new vertices that receive color $d+1$.  This perspective will be useful in some of our subsequent proofs.
\end{remark}

\begin{remark} The only balanced homology $(d-1)$-manifold that has the balanced 0-stacked property is $\C^*_d$. A balanced homology sphere has the balanced 1-stacked property if and only if it is a stacked cross-polytopal sphere, and a balanced homology $(d-1)$-manifold has the balanced 1-stacked property if and only if it belongs to $\BH^d$. (This last statement follows easily from methods/results in \cite{Datta-Murai} and is a balanced analog of \cite[Corollary 3.12]{Datta-Murai}.) The suspension of a stacked cross-polytopal sphere is an example of a balanced sphere that has the balanced 2-stacked property. More generally, if $\Delta$ has the balanced $(r-1)$-stacked property, then the suspension of $\Delta$ has the balanced $r$-stacked property.
\end{remark}

Somewhat informally, we say that a polytope $P$ is balanced if $P$ is a simplicial polytope whose boundary complex is a balanced complex. We also say that $P$ has the balanced $(r-1)$-stacked property if the boundary complex of $P$ does. This leads to the following balanced Generalized Lower Bound Conjecture (balanced GLBC, for short) that extends the balanced LBT.

\begin{conjecture}  \label{balancedGLBC}
Let $P$ be a balanced $d$-polytope.  Then 
\begin{enumerate}
\item $\overline{g}_j(P) \geq 0$ for all $1 \leq j \leq \halffloor$, and
\item for an $1 \leq r \leq \halffloor$, the following are equivalent: 
\begin{enumerate}
\item $\overline{g}_r(P) = 0$; 
\item $P$ has the balanced $(r-1)$-stacked property. 
\end{enumerate}
\end{enumerate}
\end{conjecture}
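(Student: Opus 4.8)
The plan is to reduce Conjecture~\ref{balancedGLBC} to a single Lefschetz-type input via a color-selection identity that generalizes eq.~(\ref{g2_as_a_sum}), and to treat the three implications (part~(1), the implication (2b)$\Rightarrow$(2a), and the implication (2a)$\Rightarrow$(2b)) separately, pointing out where the real difficulty sits. For the color-selection identity: for any balanced $(d-1)$-dimensional complex $\Delta$ and any subset of colors $S$ one has $h_T(\Delta_S)=h_T(\Delta)$ whenever $T\subseteq S$ (the flag $h$-numbers only involve flag $f$-numbers indexed by subsets of $S$, which are unchanged under restriction to $V_S$), so $h_j(\Delta_S)=\sum_{T\subseteq S,\ |T|=j}h_T(\Delta)$. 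Summing over all $m$-element color sets $S$ and exchanging the order of summation gives $\sum_{|S|=m}h_j(\Delta_S)=\binom{d-j}{m-j}h_j(\Delta)$. Taking $m=2j-1$ and using $\binom{d-j+1}{j}=\frac{d-j+1}{j}\binom{d-j}{j-1}$, the coefficients of $h_j(\Delta)$ and $h_{j-1}(\Delta)$ align to produce $\overline{g}_j$, and one obtains
\[
\overline{g}_j(\Delta)=\frac{j}{\binom{d-j}{j-1}}\sum_{|S|=2j-1}g_j(\Delta_S)\qquad\text{for }1\le j\le\halffloor,
\]
the case $j=2$ being exactly eq.~(\ref{g2_as_a_sum}).

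For part~(1), the displayed identity shows that it suffices to prove $g_j(\Delta_S)\ge 0$ for every $(2j-1)$-element color set $S$, i.e. that the $(2j-2)$-dimensional color-selected complex $\Delta_S$ satisfies $h_j(\Delta_S)\ge h_{j-1}(\Delta_S)$. When $\Delta=\partial P$ for a simplicial $d$-polytope, $\Delta_S$ is a doubly Cohen--Macaulay $(2j-2)$-complex (color-selected subcomplexes of balanced doubly Cohen--Macaulay complexes are again such, cf.~\cite{Goff-Klee-Novik}), and the approach I would take is to exhibit a weak Lefschetz element in degree $j$ for a generic Artinian reduction $\field[\Delta_S]/\Theta$, ideally by descending one from a Lefschetz element of $\field[\partial P]$ furnished by the $g$-theorem \cite{Billera-Lee,Stanley-gthm}. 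For $j=2$ this is precisely the rigidity argument behind Theorem~\ref{balancedLBT}: Fogelsanger's theorem \cite{Fogelsanger} (via Lemma~\ref{3-rigidity}) gives generic $3$-rigidity of $\Delta_S$, and Corollary~\ref{rigidity-corollary} translates this into $g_2(\Delta_S)\ge 0$. For higher $j$ one would need Kalai's higher (``hyperconnectivity'') rigidity or a direct hard-Lefschetz-type statement; \emph{this descent, from the polytope to its color-selected subcomplexes, is the main obstacle for part~(1).}

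The implication (2b)$\Rightarrow$(2a) is exactly Theorem~\ref{thm:balancedGLBC-2b-implies-2a}, which may be assumed. In outline, one works with the simplicial model of an $(r-1)$-stacked cross-polytopal decomposition described in Remark~\ref{alternate} --- a balanced homology $d$-ball $\Gamma$ with $\partial\Gamma=\Delta$, with every vertex of color $d+1$ having a cross-polytopal link, and with interior faces of dimension at most $d-r$ under control --- and expresses $h_j(\Delta)=h_j(\partial\Gamma)$ in terms of the $h$-numbers of $\Gamma$ and the interior-face contributions via the color-refined Dehn--Sommerville and reciprocity relations for homology balls; the stackedness hypothesis then forces the linear combination defining $\overline{g}_r$ to vanish. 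The cross-polytopal facets contribute exactly the $4\binom{d}{2}$-type bookkeeping already encountered in Lemma~\ref{missing(d-1)face} and in the proof of Theorem~\ref{balancedLBT-equality}.

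Finally, for the implication (2a)$\Rightarrow$(2b), suppose $\overline{g}_r(P)=0$. By the color-selection identity every summand $g_r(\Delta_S)$ vanishes, so each $(2r-2)$-dimensional color-selected complex $\Delta_S$ attains equality in its Lefschetz inequality; the task is then to upgrade these equalities to the geometric conclusion that the candidate $d$-complex generated by the interior $(d-r)$-faces of $\Delta$ is a homology $d$-ball all of whose facets are cross-polytopal, which exhibits the balanced $(r-1)$-stacked property. This is the balanced counterpart of the Murai--Nevo theorem \cite{Murai-Nevo} (whose combinatorial companions \cite{Bagchi-Datta-stellated,Murai-Nevo2} already underlie our uniqueness Theorem~\ref{uniqueness_thm}, which makes the candidate decomposition well-posed). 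The hard part is that the non-balanced proof of this implication rests on the Hard Lefschetz Theorem for $\field[\partial P]$ and on delicate commutative algebra, and no colorful Hard Lefschetz Theorem for balanced polytopes is currently available; indeed, absent such a theorem even part~(1) remains open. I therefore expect that establishing a colorful Lefschetz property for balanced polytopes --- after which the color-selection reduction above would deliver part~(1) and feed the equality analysis --- is the decisive step, and the one I would attack first.
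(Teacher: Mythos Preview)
The statement is a \emph{conjecture}, and the paper does not prove it in full; it only establishes the implication (2b)$\Rightarrow$(2a) (Theorem~\ref{thm:balancedGLBC-2b-implies-2a}). Your proposal is therefore not a ``proof'' either --- and you are honest about this --- but a research plan that correctly isolates where the genuine difficulties lie. On that level it matches the paper's own assessment quite closely.

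Your color-selection identity $\overline{g}_j(\Delta)=\frac{j}{\binom{d-j}{j-1}}\sum_{|S|=2j-1}g_j(\Delta_S)$ is correct, and the resulting reduction of part~(1) to the inequality $h_{j-1}(\Delta_S)\le h_j(\Delta_S)$ for $(2j-2)$-dimensional doubly Cohen--Macaulay color-selected subcomplexes is exactly the route the paper sketches in Remark~5.5 via the Bj\"orner--Swartz conjecture. So for part~(1) you and the paper are in agreement, including on the fact that the needed Lefschetz-type input for $\Delta_S$ is not available. (As the paper notes in a later remark, part~(1) was subsequently proved by Juhnke-Kubitzke and Murai, though not by this color-selection route.)

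For (2b)$\Rightarrow$(2a), your outline --- pass to the simplicial model $\Gamma$ of Remark~\ref{alternate}, use flag-refined Dehn--Sommerville/reciprocity for the homology ball $\Gamma$, and read off $\overline{g}_r=0$ --- is exactly the paper's argument (Lemmas~\ref{lem:balanced-symmetry}--\ref{lem:stacked-interior-hnums} and Theorem~\ref{thm:balancedGLBC-computation}). One small correction: the paper's computation does not go through ``$4\binom{d}{2}$-type bookkeeping'' as in Lemma~\ref{missing(d-1)face}; rather it shows directly that $h_T(\partial\Gamma)=m$ (the number of cross-polytopal cells) for every color set $T$ with $r-1\le |T|\le d-r+1$, whence $h_j(\Delta)=\binom{d}{j}m$ and $\overline{g}_j(\Delta)=0$ for $r\le j\le\halffloor$.

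For (2a)$\Rightarrow$(2b), the paper offers no argument, and you correctly flag a colorful Hard Lefschetz theorem as the missing ingredient; this implication remains open.
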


\noindent As with the $g$-conjecture, it is tempting to propose the above conjecture in the generality of arbitrary balanced simplicial (or even homology) spheres. 

Just as the first part of the classic GLBT can be written as 
\begin{equation} \label{simplex_ineq}
1 = h_0(P) \leq h_1(P) \leq \cdots \leq h_{\halffloor}(P),
\end{equation}
 the first part of the balanced GLBC is equivalent to the requirement that 
\begin{equation} \label{cross-poly-ineq}
1 = \frac{h_0(P)}{\binom{d}{0}} \leq \frac{h_1(P)}{\binom{d}{1}} \leq \frac{h_2(P)}{\binom{d}{2}} \leq \cdots \leq \frac{h_{\halffloor}(P)}{\binom{d}{\halffloor}}.
\end{equation}
 This follows by observing that the inequality $\overline{g}_j(P) \geq 0$ is equivalent to the inequality $\binom{d}{j-1} h_j(P) \geq \binom{d}{j}h_{j-1}(P)$, which served as the motivation for our definition of the balanced $g$-numbers.
It is also worth pointing out that the similarity between equations (\ref{simplex_ineq}) and (\ref{cross-poly-ineq}) goes deeper: if we denote by $\sigma_d$ the boundary complex of the $d$-simplex, then (\ref{simplex_ineq}) and (\ref{cross-poly-ineq}) can be rewritten as
\[
 \frac{h_0(P)}{h_0(\sigma_d)} \leq \frac{h_1(P)}{h_1(\sigma_d)} \leq \cdots \leq \frac{ h_{\halffloor}(P)}{h_{\halffloor}(\sigma_d)} \quad \mbox{and} \quad
\frac{h_0(P)}{h_0(\C^*_d)} \leq \frac{h_1(P)}{h_1(\C^*_d)} \leq \cdots  \leq \frac{ h_{\halffloor}(P)}{ h_{\halffloor}(\C^*_d)}, \mbox{ respectively.}
\] 

\begin{remark}
In the time that this paper was under review, Juhnke-Kubitzke and Murai \cite{Kubitzke-Murai} established the nonnegativity of the $\overline{g}$-numbers of a balanced $d$-polytope, hence settling the first part of the balanced GLBC. 
\end{remark}

\begin{remark} Bj\"orner and Swartz \cite[Problem 4.2]{Swartz-higherCM} conjectured that any $2i$-dimensional doubly Cohen--Macaulay complex satisfies the inequality $h_i\leq h_{i+1}$. As rank selected subcomplexes of a balanced homology $(d-1)$-sphere $\Delta$ are doubly Cohen--Macaulay, the conjecture by Bj\"orner and Swartz, if true, would imply that $h_i(\Delta_T)\leq h_{i+1}(\Delta_T)$ for all $(2i+1)$-element subsets $T$ of $[d]$. Since a routine double-counting argument (analogous to that of \cite[Theorem 5.3]{Goff-Klee-Novik}) shows that $$\sum_{\substack{T \subseteq [d] \\ |T| = 2i+1}}h_{i}(\Delta_T)=\binom{d-i}{i+1}h_i(\Delta) \quad \mbox{and} \quad \sum_{\substack{T \subseteq [d] \\ |T| = 2i+1}}h_{i+1}(\Delta_T)=\binom{d-i-1}{i}h_i(\Delta),$$
and since $\binom{d-i-1}{i}/\binom{d-i}{i+1}=(i+1)/(d-i)$, their conjecture would imply Part 1 of Conjecture \ref{balancedGLBC} even for the class of all balanced homology spheres.
\end{remark}

\begin{remark} \label{h_3-vs-h_1} If $\Delta$ is a balanced normal $(d-1)$-pseudomanifold with $d\geq 4$, then by Theorem \ref{balancedLBT}, each vertex $v$ of $\Delta$ satisfies $\overline{g}_2(\lk_\Delta)(v)\geq 0$, and hence 
\[2\sum_{v\in V(\Delta)} h_2(\lk_\Delta(v))-(d-2)\sum_{v\in V(\Delta)} h_1(\lk_\Delta(v))\geq 0.\]
By \cite[Proposition 2.3]{Swartz-higherCM}, $\sum_{v\in V(\Delta)} h_i(\lk_\Delta(v))=(i+1)h_{i+1}(\Delta)+(d-i)h_i(\Delta)$ for all $0\leq i\leq d-1.$
Plugging the latter equation (with $i=1$ and $i=2$) into the former, we conclude that $6h_3(\Delta)-(d-1)(d-2)h_1(\Delta)\geq 0$. Thus, for every  balanced normal $(d-1)$-pseudomanifold with $d\geq 4$, at least the inequality $\frac{h_1(\Delta)}{\binom{d}{1}} \leq \frac{h_3(\Delta)}{\binom{d}{3}}$ holds. Moreover,  $\frac{h_1(\Delta)}{\binom{d}{1}} = \frac{h_3(\Delta)}{\binom{d}{3}}$ if and only if all vertex links satisfy $\overline{g}_2(\lk_\Delta(v))=0$.
\end{remark}

The first step towards proving the classic GLBT was McMullen-Walkup's proof that $(b) \Longrightarrow (a)$ in Part 2 of the statement of the theorem.  We will show that the analogous result holds for the balanced GLBC. 

\begin{theorem}\label{thm:balancedGLBC-2b-implies-2a}
Let $\Delta$ be a balanced homology $(d-1)$-sphere with the balanced $(r-1)$-stacked property.  Then $\overline{g}_j(\Delta) = 0$ for all $r \leq j \leq \halffloor$. 
\end{theorem}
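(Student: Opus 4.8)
The plan is to mimic the classical McMullen--Walkup argument, where one computes the $h$-vector of the stacked triangulation by a careful accounting of interior faces, but now carried out in the balanced (cross-polytopal) setting. So suppose $\X$ is an $(r-1)$-stacked cross-polytopal decomposition of $\Delta$; that is, $\X$ is a homology $d$-ball whose boundary is $\Delta$ and all of whose faces of dimension $\le d-r$ lie in $\Delta$. The key tool will be a Dehn--Sommerville-type relation for homology balls relating $h_j(\X)$, $h_{d+1-j}(\X)$, and the flag/ordinary $h$-numbers of the boundary $\Delta=\partial\X$ — the analog of the relation $h_j(\Gamma)-h_{d+1-j}(\Gamma)=\binom{d}{j}\big(\text{something involving } h(\partial\Gamma)\big)$ that drives the non-balanced proof. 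Using Remark~\ref{alternate}, I would pass to the simplicial complex $\Gamma$ obtained by stellarly subdividing each cross-polytopal cell of $\X$ (introducing interior vertices of a new color $d+1$), so that $\Gamma$ is an honest balanced simplicial homology $d$-ball whose interior faces are controlled; then I can use standard face-enumeration identities for simplicial balls.

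\textbf{Key steps.} First, I would record the relationship between $h(\X)$ (or $h(\Gamma)$) and $h(\Delta)$ using the Dehn--Sommerville relations for homology manifolds with boundary (Macdonald's relations), which say $h_{d+1-j}(\Gamma)-h_j(\Gamma)=\sum (\pm)\binom{d}{j}(\text{boundary }h\text{-terms})$; after passing through the subdivision this will give a clean formula for $h_j(\Delta)$ in terms of the $h$-numbers of $\X$ and of its subdivision. Second — and this is where $(r-1)$-stackedness enters — because all faces of $\X$ of dimension $\le d-r$ are boundary faces, the complex $\X$ has \emph{no interior faces of dimension $\le d-r$}, which forces vanishing of a range of the relevant "interior" face numbers; dually this pins down $h_j(\X)=0$ for $j$ in a suitable range (or equivalently $h_j(\X)$ equal to the contribution of a single cell). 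Third, I would combine the two: the Dehn--Sommerville relation expresses $h_j(\Delta)$ for $r\le j\le\lfloor d/2\rfloor$ as a fixed multiple of $h_{j-1}(\Delta)$ plus correction terms that all vanish by the stackedness-induced vanishing. The precise multiple must come out to $\binom{d}{j}/\binom{d}{j-1}=(d-j+1)/j$, so that $j\cdot h_j(\Delta)=(d-j+1)\cdot h_{j-1}(\Delta)$, i.e. $\overline{g}_j(\Delta)=0$, for all $r\le j\le\lfloor d/2\rfloor$. A useful sanity check and inductive device: the base case $r=1$ says $\overline g_j(\C^*_d$-stacked sphere$)=0$ for all $j$, which follows from $h_j(\C^*_d)=\binom{d}{j}$ and additivity of $h$ under balanced connected sum, exactly as in the paragraph after Theorem~\ref{balancedLBT-equality}; and the general case should reduce to counting how each cross-polytopal cell contributes $\binom{d}{j}$ to $h_j$ in the top range.

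\textbf{Main obstacle.} The hard part will be step two, i.e. making precise the statement "because $\X$ has no interior faces below dimension $d-r+1$, the high $h$-numbers $h_j(\Delta)$ for $j\le\lfloor d/2\rfloor$ are rigidly determined." In the simplicial case this is the statement that an $(r-1)$-stacked ball's $h$-vector satisfies $h_{d+1-j}(\Gamma)=h_j(\Gamma)$-plus-boundary-correction in a way that collapses when $j\le\lfloor d/2\rfloor$; transplanting this to the cross-polytopal setting requires either (a) a self-contained computation of the $h$-vector of a cross-polytopal homology ball with no low-dimensional interior faces — most transparently done by an excision/long-exact-sequence argument interior-face-by-interior-face, or by the Morse-theoretic "shelling-like" decomposition of $\X$ into cells each contributing $h(\C^*_d)=(\binom{d}{0},\dots,\binom{d}{d})$ — or (b) a careful bookkeeping of what the stellar subdivisions of Remark~\ref{alternate} do to the $h$-vector so that the known simplicial result can be quoted verbatim. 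I expect route (b) to be cleanest: track $h_j$ under stellar subdivision of a $\C^*_d$-cell (a standard, if slightly tedious, local computation), reduce to the non-balanced $(r-1)$-stacked identity of McMullen--Walkup applied to $\Gamma$, and then unwind to recover $\overline g_j(\Delta)=0$. Everything else — the Dehn--Sommerville relations, additivity of $h$ under connected sum, the identity $\overline g_j(\C^*_d)=0$ — is routine or already available in the excerpt.
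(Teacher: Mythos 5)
Your high-level plan correctly identifies the two main moves — stellar-subdivide the cross-polytopal cells of $\X$ to get a balanced simplicial homology $d$-ball $\Gamma$ with a new color $d+1$ (Remark~\ref{alternate}), and then exploit Dehn--Sommerville together with the vanishing of low-dimensional interior faces forced by $(r-1)$-stackedness. But there is a concrete gap in the mechanism you propose, and it is exactly at the point you yourself flag as ``the hard part.''

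Route (b), ``quote the non-balanced $(r-1)$-stacked identity of McMullen--Walkup for $\Gamma$ and unwind,'' does not go through, because $\Gamma$ is \emph{not} an $(r-1)$-stacked ball in the non-balanced sense: the stellar subdivision introduces interior vertices $b_1,\dots,b_m$ of color $d+1$, which are interior faces of dimension $0\le d-r$. Consequently the ordinary interior $h$-numbers of $\Gamma$ do \emph{not} vanish in the low range (in fact $h_j(\Gamma^\circ)=\binom{d}{j-1}m\neq 0$ for small $j>0$), and the classical identity you want to invoke simply does not hold for $\Gamma$. This is precisely why the paper does not work with the ordinary $h$-numbers of $\Gamma$ at all: it works with the \emph{flag} $h$-numbers $h_T(\Gamma^\circ)$, $T\subseteq[d+1]$, and separates the two cases $d+1\in T$ and $d+1\notin T$. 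Stackedness kills $h_T(\Gamma^\circ)$ only when $d+1\notin T$ (and $|T|\le d+1-r$); when $d+1\in T$ the computation gives $h_T(\Gamma^\circ)=m$, the number of cross-polytopal cells. The symmetry then comes from the Bayer--Billera generalized Dehn--Sommerville relations for the balanced homology $d$-sphere $\Lambda=\Gamma\cup(b_0*\partial\Gamma)$ (so the ``Macdonald relations'' you cite are replaced by their flag refinement), which yield $h_{[d+1]-T}(\Gamma)=h_T(\Gamma^\circ)$. Chaining this with $h_T(\Gamma)=h_T(\Gamma^\circ)+h_T(\partial\Gamma)$ for $d+1\notin T$ gives the constancy statement $h_T(\Delta)=m$ for every $T\subseteq[d]$ with $r-1\le|T|\le d-r+1$, hence $h_j(\Delta)=\binom{d}{j}m$ in that range and $\overline g_j(\Delta)=0$.

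So the missing idea is not the stellar subdivision or Dehn--Sommerville per se, but the decision to carry out the whole bookkeeping at the level of \emph{flag} $h$-numbers indexed by color sets, so that the problematic color $d+1$ can be tracked separately and the Bayer--Billera symmetry $h_T=h_{[d+1]-T}$ can be used in place of the ordinary $h_j=h_{d+1-j}$. You gesture at ``flag/ordinary $h$-numbers'' once, but without making this the engine of the argument your route (b) stalls and route (a) remains a sketch. With that single change in bookkeeping your outline does essentially track the paper's proof.
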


The proof of Theorem \ref{thm:balancedGLBC-2b-implies-2a} requires several lemmas and some notation.  

\begin{notation} \label{stellar-subdiv}
Let $\Delta$ be a balanced homology $(d-1)$-sphere with the balanced $(r-1)$-stacked property, and let $\X$ be an $(r-1)$-stacked cross-polytopal decomposition of $\Delta$. Let  $C_1, C_2,\ldots, C_m$ be the collection of the (closed) $d$-cells of $\X$. As in Remark \ref{alternate}, we consider a stellar subdivision of each $C_i$: for $1\leq i\leq m$, let $b_i$ be a new vertex and let $b_i *\partial C_i$ be the cone over the boundary of $C_i$ with apex $b_i$. Replacing each $C_i$ in $\X$ with $b_i *\partial C_i$ creates a \textit{simplicial} complex $\Gamma$ with the property that $\|\Gamma\|=\|\X\|$ and $\partial\Gamma=\partial\X=\Delta$. Moreover, no face of $\Gamma$ contains two of the vertices among $\{b_1,b_2,\ldots,b_m\}$, and so $\Gamma$ can be realized as a balanced homology $d$-ball by assigning a new color $d+1$ to the new vertices $b_1,b_2,\ldots,b_m$.
\end{notation}
 
We begin by proving a general lemma about balanced homology $d$-balls with the property that every face that contains a vertex of color $d+1$ is an interior face. If $\Gamma$ is a homology ball, we write $f_i(\Gamma^{\circ})$ to denote the number of interior $i$-faces in $\Gamma$ so that $f_i(\Gamma) = f_i(\Gamma^{\circ}) + f_i(\partial\Gamma)$. Similarly, when $\Gamma$ is balanced we write $f_S(\Gamma^{\circ})$ to denote the number of interior faces in $\Gamma$ whose vertices are colored by the colors in $S$ and define $h_T(\Gamma^{\circ}) = \sum_{S \subseteq T}(-1)^{(|T|-|S|)}f_S(\Gamma^{\circ})$.  

\begin{lemma} \label{lem:balanced-symmetry}
Let $\Gamma$ be a balanced homology $d$-ball with the property that each face $F \in \Gamma$ that contains a vertex of color $d+1$ is an interior face.  Let $T \subseteq [d+1]$.
\begin{enumerate}
\item If $d+1\notin T$, then $h_T(\Gamma)=h_T(\Gamma^{\circ})+h_T(\partial \Gamma)$.
\item If $d+1\in T$, then $h_{[d+1]-T}(\Gamma)=h_T(\Gamma^{\circ})$.
\end{enumerate}
\end{lemma}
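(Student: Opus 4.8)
The plan is to prove both parts simultaneously by manipulating the defining alternating sum $h_T(\Gamma) = \sum_{S \subseteq T} (-1)^{|T|-|S|} f_S(\Gamma)$ and splitting each $f_S(\Gamma)$ into its interior and boundary parts, $f_S(\Gamma) = f_S(\Gamma^{\circ}) + f_S(\partial\Gamma)$. For Part (1), when $d+1 \notin T$, the split is immediate: since every face whose color set is contained in $T$ uses no vertex of color $d+1$, we have $f_S(\Gamma) = f_S(\Gamma^{\circ}) + f_S(\partial\Gamma)$ for all $S \subseteq T$, and summing the defining alternating sum term-by-term gives $h_T(\Gamma) = h_T(\Gamma^{\circ}) + h_T(\partial\Gamma)$. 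So Part (1) is essentially a formal bookkeeping statement and should present no difficulty.

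The substance is in Part (2). Here I would use the hypothesis that every face containing a color-$(d+1)$ vertex is interior, which forces $f_S(\partial\Gamma) = 0$ whenever $d+1 \in S$; equivalently, for $S \subseteq T$ with $d+1 \in T$, a face with color set $S$ is a boundary face only if $d+1 \notin S$. Combined with the Dehn--Sommerville-type symmetry for homology balls, the natural tool is the Macaulay/Stanley-type relation between interior and boundary flag $h$-numbers of a homology ball. Concretely, the (flag-refined) Dehn--Sommerville relations for a homology $d$-ball assert a symmetry exchanging interior faces with all faces; in the $\N$-graded setting this takes the form $h_{[d+1]-T}(\Gamma^{\circ}) = h_T(\Gamma)$ or the version I actually want, $h_{[d+1]-T}(\Gamma) = h_T(\Gamma^{\circ})$. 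I would derive this by combining the relative-homology computation $\beta_{i-1}(\lk_\Gamma(F)) $ governing whether $F$ is interior with the Euler-characteristic/Klee-type identity that underlies the flag Dehn--Sommerville relations (cf.\ the simplicial-complex version in \cite{Klee-DS} and the Stanley--Reisner picture in \cite{Stanley-green-book}). The cleanest route is probably: express $h_{[d+1]-T}(\Gamma) = \sum_{R \subseteq [d+1]-T}(-1)^{|[d+1]-T|-|R|} f_R(\Gamma)$, perform the change of variables $R \mapsto ([d+1]-T) \setminus R$ or expand $f_R(\Gamma)$ via inclusion-exclusion over the link structure, and match the result against $h_T(\Gamma^{\circ}) = \sum_{S \subseteq T}(-1)^{|T|-|S|} f_S(\Gamma^{\circ})$ using that $f_S(\Gamma^\circ) = \sum_{S \subseteq R}(-1)^{|R|-|S|}(\text{something symmetric})$ — the precise intermediate identity being the local Dehn--Sommerville relation $f_S(\Gamma^\circ)$ vs.\ faces of links.

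The main obstacle I anticipate is getting the flag-refined Dehn--Sommerville relation for homology $d$-balls in exactly the asymmetric interior/boundary form needed, and in particular tracking the role of color $d+1$ correctly: because color $d+1$ never appears on $\partial\Gamma$, the ball $\Gamma$ behaves, color-wise, like a $(d+1)$-colored complex in which one color class sits entirely in the interior, and the symmetry must respect that. I would handle this either by citing the appropriate relative Dehn--Sommerville identity for homology manifolds with boundary (the flag analog of $h_j(\Gamma) - h_{d+1-j}(\Gamma) = \pm(h_j(\partial\Gamma)-\ldots)$, suitably refined over color sets), or, to be self-contained, by a direct inclusion-exclusion argument: for each face $G \in \Gamma$ one has $\widetilde{H}_*(\lk_\Gamma(G))$ equal to that of a sphere if $G$ is interior and that of a ball if $G \in \partial\Gamma$, and feeding this into the standard computation $\sum_{G \subseteq F}(-1)^{\dim F - \dim G}(\text{link Euler char})$ yields the flag identity after summing over the appropriate color strata. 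Once the symmetry $h_{[d+1]-T}(\Gamma) = h_T(\Gamma^\circ)$ is in hand for $d+1 \in T$, Part (2) follows immediately, and I would close by noting that these two parts are exactly what is needed to compute $h_T$ of the cross-polytopal decomposition $\Gamma$ from Notation~\ref{stellar-subdiv} in the proof of Theorem~\ref{thm:balancedGLBC-2b-implies-2a}.
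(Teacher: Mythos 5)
Part 1 of your proposal matches the paper essentially verbatim: split $f_S(\Gamma)=f_S(\Gamma^\circ)+f_S(\partial\Gamma)$ (which holds for every $S$, since $\Gamma$ is the disjoint union of $\Gamma^\circ$ and $\partial\Gamma$) and sum term-by-term. No issues there.

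For Part 2 you correctly identify the target identity $h_{[d+1]-T}(\Gamma)=h_T(\Gamma^\circ)$ as a flag Dehn--Sommerville relation for balanced homology balls, but you do not actually prove it --- you offer two possible routes (cite it, or run an inclusion--exclusion over link Euler characteristics) and carry out neither. That is the gap. You also do not make visible use of the lemma's standing hypothesis (every face containing a color-$(d+1)$ vertex is interior), beyond observing that color $d+1$ never appears on $\partial\Gamma$; but you never explain how the hypothesis enters the derivation, and it is exactly what makes the paper's argument go. The paper's trick, which your sketch misses, is to \emph{cone off the boundary}: introduce a new vertex $b_0$ of color $d+1$ and set $\Lambda:=\Gamma\cup(b_0*\partial\Gamma)$. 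Because no boundary face of $\Gamma$ uses color $d+1$, the complex $\Lambda$ is a \emph{balanced} homology $d$-sphere on the full color set $[d+1]$, so the Bayer--Billera generalized Dehn--Sommerville relations $h_T(\Lambda)=h_{[d+1]-T}(\Lambda)$ apply off the shelf. One then computes $h_T(\Lambda)$ by decomposing $\Lambda=\Gamma^\circ\amalg(b_0*\partial\Gamma)$ and observing that the cone contribution telescopes to zero when $d+1\in T$ and reduces to $h_T(\partial\Gamma)$ when $d+1\notin T$; this gives $h_T(\Lambda)=h_T(\Gamma^\circ)$ for $d+1\in T$ and $h_T(\Lambda)=h_T(\Gamma)$ for $d+1\notin T$, and chaining these two facts through the sphere symmetry yields Part 2 immediately. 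This route avoids proving a new Dehn--Sommerville relation for balls altogether; it reduces everything to the known sphere case. If you want to salvage your self-contained inclusion--exclusion route, it can be made to work, but you would need to actually carry out the link computation and keep careful track of the color-$(d+1)$ stratum; as written, that step is a placeholder, not a proof.
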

\begin{proof} For Part 1, note that since $\Gamma$ is the disjoint union of $\Gamma^{\circ}$ and $\partial\Gamma$, $f_S(\Gamma)=f_S(\Gamma^{\circ})+f_S(\partial\Gamma)$ for any $S \subseteq [d]$. The assertion then follows from the defining relation for the flag $h$-numbers, see eq.~(\ref{flag-h-numbers}). 

For Part 2, consider a new vertex $b_0$, of color $d+1$, and let $\Lambda:=\Gamma \cup (b_0*\partial\Gamma)$. Since $\partial\Gamma$ does not contain vertices of color $d+1$, it follows that $\Lambda$ is a balanced homology $d$-sphere. Also since $\Lambda$ is the disjoint union of $\Gamma^{\circ}$ and $b_0*\partial\Gamma$, $f_S(\Lambda)=f_S(\Gamma^{\circ})+f_S(b_0*\partial\Gamma)$ for all $S\subseteq [d+1]$. Thus 
\begin{equation}  \label{h_T(Lambda)}
h_T(\Lambda)= \sum_{S\subseteq T} (-1)^{|T|-|S|}f_S(\Gamma^\circ)+\sum_{S\subseteq T} (-1)^{|T|-|S|}f_S(b_0*\partial\Gamma).
\end{equation}

We now consider two cases according to whether or not $d+1$ belongs to $T$. If $d+1\notin T$, then for all $S\subseteq T$, $f_S(b_0*\partial\Gamma)=f_S(\partial\Gamma)$, and the above equation simplifies to
\begin{equation}  \label{(d+1)notinT}
h_T(\Lambda)=h_T(\Gamma^\circ)+h_T(\partial\Gamma)=h_T(\Gamma) \quad \mbox{if } d+1\notin T,
\end{equation}
where the last step is by Part 1 of the lemma.
On the other hand, if $d+1\in T$, then 
\begin{eqnarray*}
\sum_{S\subseteq T} &(-1)^{|T|-|S|}& f_S(b_0*\partial\Gamma) \\ &=&
\sum_{S\subseteq T-\{d+1\}} \left[(-1)^{|T|-|S|}f_S(b_0*\partial\Gamma) + 
(-1)^{|T|-|S|-1}f_{S\cup\{d+1\}}(b_0*\partial\Gamma)\right] \\
&=& \sum_{S\subseteq T-\{d+1\}} (-1)^{|T|-|S|} \left[f_S(\partial\Gamma)-f_S(\partial\Gamma)\right]=0,
\end{eqnarray*}  
and so equation (\ref{h_T(Lambda)}) reduces to 
\begin{equation} \label{(d+1)inT}
h_T(\Lambda)=h_T(\Gamma^\circ) \quad \mbox{if } d+1\in T.
\end{equation}

The statement of the lemma now follows from the generalized Dehn-Sommerville relations for balanced homology spheres \cite{BayerBillera} asserting that $h_T(\Lambda)=h_{[d+1]-T}(\Lambda)$. Indeed, if $d+1\in T \subseteq [d+1]$, then
\[
h_T(\Gamma^\circ)=h_T(\Lambda)=h_{[d+1]-T}(\Lambda)=h_{[d+1]-T}(\Gamma),
\]
where the first step is by eq.~(\ref{(d+1)inT}), the second is by the generalized Dehn-Sommerville relations, and the last one is by  eq.~(\ref{(d+1)notinT}).
\end{proof}

Similar computations show that Part 2 of Lemma \ref{lem:balanced-symmetry} also holds for  $d+1\notin T\subseteq [d+1]$, and that if $d+1\in T \subseteq [d+1]$, then $h_T(\Gamma)=h_T(\Gamma^{\circ})-h_T(\partial \Gamma)$. We do not include a proof since we will not use these cases.

\begin{lemma} \label{lem:stacked-interior-hnums}
Let $\Delta$ be a balanced homology $(d-1)$-sphere with the balanced $(r-1)$-stacked property, and let $\X$ and $\Gamma$ be as in Notation \ref{stellar-subdiv}. Let $T \subseteq [d+1]$ be a subset of colors.  
\begin{enumerate} 
\item If $d+1 \notin T$ and $|T| \leq d+1-r$, then $h_T(\Gamma^{\circ}) = 0$. 
\item If $d+1 \in T$ and $|T| \leq d+2-r$, then $h_T(\Gamma^{\circ}) = m$, where $m=f_d(\X)$ is the number of cross-polytopes in an $(r-1)$-stacked cross-polytopal decomposition of $\Delta$. 
\end{enumerate}
\end{lemma}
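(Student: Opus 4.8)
The strategy is to use the explicit combinatorial description of $\Gamma$ recorded in Notation \ref{stellar-subdiv}: the faces of $\Gamma$ that avoid all of $b_1,\dots,b_m$ are exactly the faces of $\skel_{d-1}(\X)$, i.e. the faces of $\X$ of dimension at most $d-1$; and any face of $\Gamma$ containing some $b_i$ is automatically interior (since $b_i$ is an interior vertex) and, after deleting $b_i$, becomes a face of $\lk_\Gamma(b_i)=\partial C_i\cong\C^*_d$. With this in hand, Part 1 is a dimension count against the defining property of an $(r-1)$-stacked cross-polytopal decomposition, and Part 2 reduces to Part 1 together with the identity $f_S(\C^*_d)=2^{|S|}$ from Example \ref{C*d-example} and a one-line binomial computation.

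\textbf{Part 1.} Suppose $d+1\notin T$ and $|T|\le d+1-r$. Take any $S\subseteq T$; then $|S|\le d+1-r$, so a face $F\in\Gamma$ with $\kappa(F)=S$ uses no vertex of color $d+1$, hence lies in $\skel_{d-1}(\X)$, and has $\dim F=|S|-1\le d-r$. By the $(r-1)$-stacked property every face of $\X$ of dimension at most $d-r$ is a face of $\partial\X=\Delta=\partial\Gamma$, so $F\in\partial\Gamma$. Therefore $f_S(\Gamma^\circ)=0$ for every $S\subseteq T$, and consequently $h_T(\Gamma^\circ)=\sum_{S\subseteq T}(-1)^{|T|-|S|}f_S(\Gamma^\circ)=0$.

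\textbf{Part 2.} Write $T=T'\sqcup\{d+1\}$ with $T'\subseteq[d]$, so $|T'|=|T|-1\le d+1-r$, and split the defining sum for $h_T(\Gamma^\circ)$ according to whether $S$ contains $d+1$:
\[
h_T(\Gamma^\circ)=\sum_{S\subseteq T'}(-1)^{|T|-|S|}f_S(\Gamma^\circ)+\sum_{S\subseteq T'}(-1)^{|T|-|S|-1}f_{S\cup\{d+1\}}(\Gamma^\circ).
\]
The first sum vanishes by the argument of Part 1 (valid since $|S|\le|T'|\le d+1-r$). For the second sum, a face $F\in\Gamma$ with $\kappa(F)=S\cup\{d+1\}$ contains a \emph{unique} $b_i$, is interior, and $F\setminus\{b_i\}$ is a face of $\partial C_i\cong\C^*_d$ with color set $S$; conversely every pair $(b_i,G)$ with $G\in\partial C_i$ of color set $S$ yields such an interior face. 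Hence $f_{S\cup\{d+1\}}(\Gamma^\circ)=\sum_{i=1}^m f_S(\C^*_d)=m\,2^{|S|}$. Substituting and using $|T|-|S|-1=|T'|-|S|$ together with the binomial theorem $\sum_{S\subseteq T'}(-1)^{|T'|-|S|}2^{|S|}=(2-1)^{|T'|}=1$ gives $h_T(\Gamma^\circ)=m$.

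\textbf{Main obstacle.} There is no serious difficulty here; the content is bookkeeping. The point to be careful about is the dimension threshold: the hypothesis $|T|\le d+2-r$ in Part 2 is exactly what is needed to force $|T'|\le d+1-r$, so that the color-$\le\!d$ faces appearing in the first sum have dimension at most $d-r$ and hence lie on the boundary — without this the reduction to Part 1 fails. The only other thing to verify carefully is the structural claim that every face of $\Gamma$ meeting $\{b_1,\dots,b_m\}$ contains exactly one such vertex, is interior, and has link $\C^*_d$; all of this is already built into Notation \ref{stellar-subdiv}.
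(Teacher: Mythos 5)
Your proof is correct and follows essentially the same route as the paper's: both reduce Part 1 to showing $f_S(\Gamma^\circ)=0$ for $S\subseteq T$ via the $(r-1)$-stacked condition, and both reduce Part 2 to Part 1 plus the count $f_{S\cup\{d+1\}}(\Gamma^\circ)=m\cdot 2^{|S|}$ and the binomial theorem (the paper keeps the constraint $d+1\in S$ in the sum rather than writing $T=T'\sqcup\{d+1\}$, but that is cosmetic). One small wording slip in your Part 1: the reason $F$ uses no vertex of color $d+1$ is that $d+1\notin T\supseteq S$, not the cardinality bound $|S|\le d+1-r$; the latter is what gives $\dim F\le d-r$.
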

\begin{proof}
Recall that $$h_T(\Gamma^{\circ}) = \sum_{S \subseteq T}(-1)^{|T|-|S|}f_S(\Gamma^{\circ}).$$  Let $F$ be a face of $\Gamma$ with $|F| \leq d-r+1$, so $\dim F\leq d-r$.  If $F$ does not contain a vertex of color $d+1$, then $F$ is a face of $\X$, and hence $F\in\skel_{d-r}(\X)$. However, since $\Delta$ has the balanced $(r-1)$-stacked property, $\skel_{d-r}(\X)=\skel_{d-r}(\Delta)$. Thus $F$ is a face of $\Delta$, and so it is a boundary face of $\Gamma$. In other words, if $|S| \leq d-r+1$ and $d+1 \notin S$, then $f_S(\Gamma^{\circ}) = 0$.  This proves Part 1.  

Now assume that $T$ satisfies the assumptions of Part 2 of the lemma.  If $S \subseteq T$ and $d+1 \notin S$, then $|S| \leq |T|-1 \leq d+1-r$.  Therefore the above argument shows that $f_S(\Gamma^{\circ}) = 0$.  Thus 
\begin{equation}\label{eqn4}
h_T(\Gamma^{\circ}) = \sum_{\substack{S \subseteq T \\ d+1 \in S}} (-1)^{|T|-|S|}f_S(\Gamma^{\circ}). 
\end{equation}

Next, notice that if $S \subseteq [d+1]$ and $d+1 \in S$, then $f_S(\Gamma^{\circ}) = m \cdot 2^{|S|-1}$.   Indeed, if $G$ is a face whose vertices are colored by $S$, then $G$ contains some vertex $b_i$.  Consider $G' = G - \{b_i\}$.  For each cross-polytope $C_i$, there are $2^{|S|-1}$ such faces $G'$, and hence $f_S(\Gamma^{\circ}) = m \cdot 2^{|S|-1}$.  Thus Part 2 follows from the binomial theorem and eq.~\eqref{eqn4} exactly as in Example \ref{C*d-example}.  \end{proof}

Theorem \ref{thm:balancedGLBC-2b-implies-2a} is a consequence of the following result, which shows that certain $h$-numbers of a homology $(d-1)$-sphere with the balanced $(r-1)$-stacked property depend only on the number of cross-polytopes in its cross-polytopal decomposition. 

\begin{theorem} \label{thm:balancedGLBC-computation}
Let $\Delta$ be a balanced homology $(d-1)$-sphere with the balanced $(r-1)$-stacked property, let $\X$ and $\Gamma$ be as in Notation \ref{stellar-subdiv}, and let $m=f_d(\X)$. If $T \subseteq [d]$ and $r-1 \leq |T| \leq d-r+1$, then $h_T(\Delta) = h_T(\partial\Gamma) = m$.  
\end{theorem}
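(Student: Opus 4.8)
The plan is to combine the two parts of Lemma~\ref{lem:balanced-symmetry} with the vanishing/counting computations of Lemma~\ref{lem:stacked-interior-hnums}. Fix $T \subseteq [d]$ with $r-1 \leq |T| \leq d-r+1$. The identity $h_T(\Delta) = h_T(\partial\Gamma)$ is immediate from Notation~\ref{stellar-subdiv}, since $\partial\Gamma = \partial\X = \Delta$ as balanced complexes (no vertex of color $d+1$ lies on the boundary). So the content is the equality $h_T(\partial\Gamma) = m$.

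First I would set $T' := ([d+1] - T) \cup \{d+1\}$, so that $d+1 \in T'$ and $[d+1] - T' = T$. Part~2 of Lemma~\ref{lem:balanced-symmetry} (applicable because $\Gamma$ is a balanced homology $d$-ball in which every face using color $d+1$ is interior) gives $h_T(\Gamma) = h_{[d+1]-T}(\Gamma) \cdot$\,—\,more precisely it gives $h_{[d+1]-T'}(\Gamma) = h_{T'}(\Gamma^{\circ})$, i.e. $h_T(\Gamma) = h_{T'}(\Gamma^{\circ})$. Now I check the size constraint needed to invoke Part~2 of Lemma~\ref{lem:stacked-interior-hnums} for $T'$: since $|T| \geq r-1$ we get $|T'| = (d+1 - |T|) + 1 = d+2 - |T| \leq d+2 - (r-1) = d+3-r$\,—\,hmm, this is one too large, so I would instead be more careful and note that what is actually needed is $|T'| \leq d+2-r$, equivalently $|T| \geq r$. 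To cover the boundary case $|T| = r-1$ as well, I would use Part~1 of Lemma~\ref{lem:balanced-symmetry} together with Part~1 of Lemma~\ref{lem:stacked-interior-hnums}: when $d+1 \notin T$ and $|T| \leq d+1-r$, we have $h_T(\Gamma) = h_T(\Gamma^{\circ}) + h_T(\partial\Gamma)$ and $h_T(\Gamma^{\circ}) = 0$, so $h_T(\partial\Gamma) = h_T(\Gamma)$; then apply Part~2 of Lemma~\ref{lem:balanced-symmetry} to rewrite $h_T(\Gamma)$ as $h_{([d+1]-T)}(\Gamma^{\circ})$ and feed the resulting color set into Part~2 of Lemma~\ref{lem:stacked-interior-hnums}, whose hypothesis $|([d+1]-T)| = d+1-|T| \leq d+2-r$ reduces exactly to $|T| \geq r-1$, which holds. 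That chain yields $h_T(\partial\Gamma) = m$.

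So the skeleton of the argument is: (i) reduce $h_T(\Delta)$ to $h_T(\partial\Gamma)$; (ii) use Lemma~\ref{lem:balanced-symmetry}(1) plus Lemma~\ref{lem:stacked-interior-hnums}(1) to peel off the interior contribution $h_T(\Gamma^{\circ}) = 0$ and get $h_T(\partial\Gamma) = h_T(\Gamma)$, valid since $|T| \leq d-r+1$; (iii) use Lemma~\ref{lem:balanced-symmetry}(2) to convert $h_T(\Gamma)$ into the interior $h$-number of the complementary color set containing $d+1$; (iv) use Lemma~\ref{lem:stacked-interior-hnums}(2) to evaluate that as $m$, where the hypothesis $|[d+1]-T| \leq d+2-r$ is exactly $|T| \geq r-1$. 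The two inequalities $r-1 \leq |T| \leq d-r+1$ are used once each, which is a good sanity check that the range in the statement is sharp.

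The main obstacle I anticipate is purely bookkeeping: matching up which version of Lemma~\ref{lem:balanced-symmetry} (Part~1 versus Part~2, and the ``also holds'' remark about $h_T(\Gamma) = h_T(\Gamma^{\circ}) - h_T(\partial\Gamma)$ for $d+1 \in T$) to apply to which color set, and verifying the cardinality hypotheses translate correctly under complementation $T \mapsto [d+1]-T$. There is no hard mathematics beyond the two cited lemmas and the generalized Dehn--Sommerville relations already used inside Lemma~\ref{lem:balanced-symmetry}; the subtlety is entirely in getting the index gymnastics right so that both the $|T| \leq d-r+1$ and $|T| \geq r-1$ hypotheses are genuinely invoked.
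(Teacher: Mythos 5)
Your final skeleton (steps (i)--(iv)) is exactly the paper's proof: peel off $h_T(\Gamma^{\circ}) = 0$ via Lemma~\ref{lem:balanced-symmetry}(1) and Lemma~\ref{lem:stacked-interior-hnums}(1) to get $h_T(\partial\Gamma) = h_T(\Gamma)$, then convert via Lemma~\ref{lem:balanced-symmetry}(2) and evaluate via Lemma~\ref{lem:stacked-interior-hnums}(2). The mid-proposal detour was caused only by a miscount: since $T \subseteq [d]$ you already have $d+1 \in [d+1]-T$, so $T' = [d+1]-T$ has $|T'| = d+1-|T|$ (not $d+2-|T|$), and $|T'| \leq d+2-r$ is equivalent to $|T| \geq r-1$, not $|T| \geq r$; you corrected this yourself in the fallback paragraph, so there is no remaining gap.
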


\begin{proof} For $T$ satisfying the assumptions of the theorem and $S: = [d+1] - T$, we have 

$\bullet$ $h_T(\Gamma^{\circ}) = 0$. (This follows from Lemma \ref{lem:stacked-interior-hnums}, since $d+1 \notin T$ and $|T| \leq d+1-r$.) 

$\bullet$  $h_T(\Gamma^{\circ}) = h_T(\Gamma) - h_T(\partial\Gamma)$.  (This is a consequence of Part 1 of  Lemma \ref{lem:balanced-symmetry}.)

$\bullet$  $h_T(\Gamma) = h_{S}(\Gamma^{\circ})$. (See Part 2 of Lemma \ref{lem:balanced-symmetry}; recall that $T\subseteq [d]$, and hence $d+1\in S$.)

$\bullet$  $h_S(\Gamma^{\circ}) = m$.  (See Part 2 of Lemma \ref{lem:stacked-interior-hnums}; note that  $d+1 \in S$ and $|S| = d+1-|T| \leq d+1-(r-1) = d+2-r$.)

Putting these four results together gives
\[
0= h_T(\Gamma^{\circ}) = h_T(\Gamma) - h_T(\partial\Gamma) =
h_S(\Gamma^{\circ})-h_T(\partial\Gamma) = m - h_T(\partial\Gamma).
\] The result follows.
\end{proof}

\noindent \textit{Proof of Theorem \ref{thm:balancedGLBC-2b-implies-2a}:}
For $\Delta$ as in the statement of the theorem and any $r-1 \leq j \leq d-r+1$, Theorem \ref{thm:balancedGLBC-computation} tells us that,
\begin{equation} \label{h_j-vs-m}
h_j(\Delta) = \sum_{\substack{T \subseteq [d] \\ |T| = j}}h_T(\partial\Gamma) = \sum_{\substack{T \subseteq [d] \\ |T| = j}} m = \binom{d}{j} \cdot m.
\end{equation}
Thus, for $r\leq j\leq \halffloor$,  $\overline{g}_j(\Delta) = j\cdot h_j(\Delta) - (d-j+1)\cdot h_{j-1}(\Delta) = j \cdot \binom{d}{j} \cdot m - (d-j+1) \cdot \binom{d}{j-1} \cdot m = 0.$
\hfill $\square$

\begin{remark} Equation (\ref{h_j-vs-m}) is completely analogous to the non-balanced situation: if $\Delta$ is an $(r-1)$-stacked homology $(d-1)$-sphere and $B$ is a homology $d$-ball such that $\partial B=\Delta$ and $\skel_{d-r}(B)=\skel_{d-r}(\Delta)$, then it is not hard to show that $h_j(\Delta)=f_d(B)$ for all $r-1 \leq j \leq d-r+1$.
\end{remark}

\subsection{Uniqueness of $(r-1)$-stacked cross-polytopal decomposition}
The second step towards proving the classical GLBT was McMullen's observation \cite{McMullen04} that if an $(r-1)$-stacked triangulation of a simplicial polytope exists, then it is unique. An explicit description of such a triangulation was then worked out in \cite[Theorem 2.3]{Murai-Nevo} (for $(r-1)$-stacked spheres) and \cite[Theorem 2.20]{Bagchi-Datta-stellated} and  \cite[Theorem 4.2]{Murai-Nevo2} (for $(r-1)$-stacked manifolds). In the rest of this section we establish balanced analogs of these results. 

\begin{definition} \label{Delta(j)} If $\Delta$ is a balanced $(d-1)$-dimensional complex and $1\leq j<d$, then define $\overline{\Delta}(j)$ to be the following $d$-dimensional cross-polytopal complex: 
\begin{itemize}
\item The $(d-1)$-skeleton of $\overline{\Delta}(j)$, denoted $\Delta(j)$, consists of all simplices $F\subset V$ such that $\skel_j(F)\subseteq \Delta$.
\item For each $2d$-subset  $W=\{u_1,v_1,\ldots, u_d, v_d\}$ of $V$ with the property that  $\{u_i,v_i\}\subseteq V_i(\Delta)$ for all $i=1,\ldots, d$ and $\Delta(j)[W]\cong \C^*_d$, there is a $d$-cell (cross-polytope) attached to $\Delta(j)[W]$.
\end{itemize}
\end{definition}

Note that $\Delta\subseteq \Delta(j)$, and so $\Delta(j)$ is at least $(d-1)$-dimensional. On the other hand, since $\Delta$ is balanced, the graph of $\Delta$ contains no $(d+1)$-cliques. The assumption that $j\geq 1$ then implies that $\Delta(j)$ is exactly $(d-1)$-dimensional. Note also that $\overline{\Delta}(j)\supseteq \overline{\Delta}(j+1)$ for all $1\leq j <d-1$, and that $\overline{\Delta}(j)$ is the maximal cross-polytopal complex with the property that $\skel_j(\overline{\Delta}(j))=\skel_j(\Delta)$.

The significance of $\overline{\Delta}(j)$ is explained by the following theorem that is a balanced analog of the aforementioned results by Murai--Nevo and Bagchi--Datta; its proof is similar in spirit to that of  \cite[Theorem 2.3]{Murai-Nevo} and \cite[Theorem 2.20]{Bagchi-Datta-stellated}, but requires some additional twists and bookkeeping. 

\begin{theorem} \label{uniqueness_thm}
Let $\Delta$ be a balanced $(d-1)$-dimensional connected homology manifold with the balanced $(r-1)$-stacked property, and let $\X$ be an $(r-1)$-stacked cross-polytopal decomposition of~$\Delta$.
\begin{enumerate}
\item If $\Delta$ is a homology sphere and $2\leq r\leq (d+1)/2$, then $\X=\overline{\Delta}(r-1)=\overline{\Delta}(d-r)$.
\item If $\Delta$ is a homology manifold and  $2\leq r\leq d/2$, then $\X=\overline{\Delta}(r)=\overline{\Delta}(d-r)$.
\end{enumerate}
\end{theorem}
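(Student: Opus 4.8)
The plan is to adapt the Murai--Nevo/Bagchi--Datta strategy: show that the cross-polytopal decomposition $\X$ must coincide with the canonically defined complex $\overline{\Delta}(j)$ for the appropriate value of $j$, by establishing that $\overline{\Delta}(j)$ is simultaneously a subcomplex of $\X$ (after stellar subdivision, in the simplicial model $\Gamma$ of Notation \ref{stellar-subdiv}) and contained in it, leaving no room for extra faces or cells. Throughout I would work with $\Gamma$, the balanced simplicial homology $d$-ball (or manifold with boundary) obtained from $\X$ by stellarly subdividing each cross-polytopal cell $C_i$ with an apex $b_i$ of color $d+1$.

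First I would establish the containment $\Delta(j) \subseteq$ (the $(d-1)$-skeleton of $\X$), where $j = r-1$ in the sphere case and $j=r$ in the manifold case. By definition, a simplex $F$ lies in $\Delta(j)$ iff $\skel_j(F) \subseteq \Delta$. The $(r-1)$-stacked property gives $\skel_{d-r}(\X) = \skel_{d-r}(\Delta)$, which takes care of low-dimensional faces directly; for the higher-dimensional faces of $\Delta(j)$ one argues that if every $j$-face of $F$ is in $\Delta$, then $F$ itself must appear in $\X$ — otherwise $\X$ would fail to ``close up'' around $F$ as a homology ball/manifold. This is where I expect to use the local homology structure of $\Gamma$: the link of a face $G$ with $|G| \le$ some threshold is forced (it is either a homology sphere, if $G$ is a boundary face, or the cross-polytope $\C^*_d$ is the only option when $G$ contains a color-$(d+1)$ vertex), and combined with the generalized Dehn--Sommerville relations (via Lemma \ref{lem:balanced-symmetry}) plus the vanishing of interior flag $h$-numbers (Lemma \ref{lem:stacked-interior-hnums}), this pins down exactly which simplices occur. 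The numerology $2 \le r \le (d+1)/2$ (resp.\ $r \le d/2$) is precisely what makes the two ranges $|T| \le d+1-r$ and $|T| \ge r-1$ overlap, so that every relevant $h_T$ is controlled; this is the same interplay that made Theorem \ref{thm:balancedGLBC-computation} work.

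Next I would prove the reverse: $\X \subseteq \overline{\Delta}(j)$. For the $(d-1)$-skeleton, I need that every simplex of $\X$ has all its $j$-faces in $\Delta$; but by the $(r-1)$-stacked property every $(d-r)$-face of $\X$ is in $\Delta$, and $j \le d-r$ by the assumed bound on $r$, so this is immediate. For the $d$-cells: each cross-polytopal cell $C_i$ of $\X$ has boundary $\cong \C^*_d$ sitting inside $\Delta(j)[W]$ for its vertex set $W$, and the ``no two $d$-cells with the same vertex set'' condition in Definition \ref{cross-polytopal-complex} together with the maximality property of $\overline{\Delta}(j)$ noted after Definition \ref{Delta(j)} forces the $d$-cells of $\X$ to be among those of $\overline{\Delta}(j)$. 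Combining both inclusions gives $\X = \overline{\Delta}(j)$. Finally, the identity $\overline{\Delta}(r-1) = \overline{\Delta}(d-r)$ (resp.\ $\overline{\Delta}(r) = \overline{\Delta}(d-r)$) should follow formally: both sides equal $\X$, or alternatively one shows directly that $\skel_{r-1}(\Delta)$ determines $\skel_{d-r}(\Delta)$ for a complex with this stacked structure, using that $h_T(\Delta) = m$ for all $T$ with $r-1 \le |T| \le d-r+1$ (Theorem \ref{thm:balancedGLBC-computation}) to show the two cross-polytopal complexes have the same face numbers and hence, being nested, coincide.

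The main obstacle I anticipate is the manifold case and the bookkeeping around vertices of color $d+1$: in the homology ball $\Gamma$ one must carefully separate interior faces from boundary faces when verifying that a candidate simplex of $\Delta(j)$ actually appears, since the link condition one wants to invoke (link is a homology sphere, or link is $\C^*_d$) behaves differently for interior versus boundary faces, and in the manifold-with-boundary setting the ``homology sphere'' links are replaced by homology spheres only away from the boundary. Managing this — essentially redoing the local-homology analysis of Lemma \ref{lem:balanced-symmetry} one dimension at a time and tracking when a face is forced to be interior — is the technical heart, and is presumably what the authors mean by ``additional twists and bookkeeping.''
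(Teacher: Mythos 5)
Your outline correctly identifies the overall shape of the argument — two containments, with the easy direction $\X \subseteq \overline{\Delta}(j)$ coming from $\skel_{d-r}(\X) = \skel_{d-r}(\Delta)$ together with $j \leq d-r$, and the $d$-cells handled via the no-repeated-vertex-set condition plus the observation that no extra edges can appear on a cross-polytope's vertex set because $\Delta$ is balanced. That part matches the paper.

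The gap is in the hard direction, $\overline{\Delta}(j) \subseteq \X$, which amounts to showing that $\X$ has no missing simplicial faces of dimension $\geq r$ and no missing cross-polytopal faces. You propose to get this out of the flag $h$-number vanishing (Lemma \ref{lem:stacked-interior-hnums}), Lemma \ref{lem:balanced-symmetry}, and the generalized Dehn--Sommerville relations. Those are purely enumerative statements: they constrain how \emph{many} faces $\Gamma^\circ$ has in each color class, but they cannot detect whether a \emph{specific} set $F$ with $\skel_{d-r}(F) \subseteq \Delta$ is a face of $\X$, which is what is needed. Enumerative information cannot rule out, say, trading one interior face for another of the same color type. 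The paper instead proves the no-missing-face statement (Lemma \ref{no_missing_faces}) topologically: pass to the homology $d$-sphere $\Lambda = \Gamma \cup (b_0 * \Delta)$, use Alexander duality to convert $\beta_{\ell-1}(\X[F])$ into $\beta_{d-\ell}(\Lambda[V(\Lambda)-F])$, and then kill the latter by an induction over the apexes $b_0, b_1, \ldots, b_m$ using Mayer--Vietoris, exploiting that each new cone is attached along a $(d-1)$-sphere or $(d-1)$-ball. Nothing in your proposal replaces this; a counting-only route would also run into circularity, since establishing that $\overline{\Delta}(d-r)$ has the right face numbers presupposes that it is the homology ball you are trying to identify it with.

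For Part 2 you describe "redoing the local-homology analysis one dimension at a time," which is vague and not how the paper proceeds. The paper reduces Part 2 to Part 1 by passing to vertex links: $\lk_\X(v)$ is an $(r-1)$-stacked cross-polytopal decomposition of the homology $(d-2)$-sphere $\lk_\Delta(v)$, the bound $r \leq d/2$ places you in the range where Part 1 applies to the link, giving $\lk_\X(v) = \overline{\lk_\Delta(v)}(r-1)$, and a separate direct check shows $\lk_{\overline{\Delta}(r)}(v) \subseteq \overline{\lk_\Delta(v)}(r-1)$; comparing yields $\overline{\Delta}(r) \subseteq \X$. Identifying this link-based reduction is the missing idea for Part 2.
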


The proof of Part 1 of Theorem \ref{uniqueness_thm} relies on the following lemma.

\begin{lemma} \label{no_missing_faces}
Let $\Delta$ be a balanced $(d-1)$-dimensional $\field$-homology sphere with the balanced $(r-1)$-stacked property (with $r\geq 2$), and let $\X$ be  an $(r-1)$-stacked cross-polytopal decomposition of $\Delta$. Then 
\begin{enumerate}
\item $\X$ has no missing simplicial faces of dimension $\geq r$.
\item $\X$ has no missing cross-polytopal faces, that is, if  $W=\{u_1,v_1,\ldots, u_d, v_d\}\subseteq V(\Delta)$ is such that $(\skel_{d-1}\X)[W]\cong \C^*_d$, then $\X[W]$ is one of the closed $d$-cells of $\X$.
\end{enumerate}
\end{lemma}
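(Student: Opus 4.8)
The plan is to prove both parts of Lemma~\ref{no_missing_faces} by exploiting the $h$-number computations from Theorem~\ref{thm:balancedGLBC-computation} together with the structure of the simplicial complex $\Gamma$ from Notation~\ref{stellar-subdiv}. The key point is that a missing face of large dimension (or a missing cross-polytopal face) in $\X$ would force an ``unused'' face in $\Gamma^\circ$ or its boundary, which in turn would contradict the rigidity of the flag $h$-numbers that was pinned down in Theorem~\ref{thm:balancedGLBC-computation}.

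For Part~1, suppose $F$ is a missing simplicial face of $\X$ of dimension $k \geq r$, so $F \notin \X$ but every proper subset of $F$ is a face of $\X$. Since $F$ has at most $d$ vertices (as $\X$ is balanced on $d$ colors below the barycenters), we have $r \le k+1 \le d$. Let $T = \kappa(F)$, so $|T| = k+1$ with $r \le |T| \le d$. I would first use that all faces of $\X$ of dimension $\le d-r$ are faces of $\Delta$, combined with Lemma~\ref{lem:stacked-interior-hnums}, to control $h_T(\Gamma^\circ)$ and $h_T(\partial\Gamma)$; the subtle range is $r \le |T| \le d-r+1$, where Theorem~\ref{thm:balancedGLBC-computation} gives $h_T(\partial\Gamma) = m$ exactly. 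A missing face $F$ on color set $T$ contributes a ``deficiency'' to $f_T(\Delta)$ relative to what $\overline{\Delta}(r-1)[V_T]$ would have, and I would show this propagates to make some $h_T$ strictly smaller than the value forced by Theorem~\ref{thm:balancedGLBC-computation}, a contradiction. Concretely, I expect to argue that if $F$ were missing, one can restrict attention to the link of $F$'s boundary inside $\Delta$ (which is a balanced homology sphere of lower dimension that inherits the stacked property via $\X$), reducing to a case where the missing face has dimension exactly $d-r$ or is detected by a flag $h$-number equality; the inequality direction of Corollary~\ref{rigidity-corollary} applied to suitable color-restrictions $\Delta_S$ then gives the contradiction, much as in the proof of Lemma~\ref{missing_faces}.

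For Part~2, suppose $W = \{u_1, v_1, \dots, u_d, v_d\}$ satisfies $(\skel_{d-1}\X)[W] \cong \C^*_d$ but $\X[W]$ is not one of the closed $d$-cells. Then the cross-polytopal boundary on $W$ is present in $\X$ but not ``filled in'' by a cell. I would count: each $d$-cell $C_i$ of $\X$ has vertex set forming a $\C^*_d$, and by Definition~\ref{cross-polytopal-complex} no two $d$-cells share a vertex set; so the $m$ cells account for $m$ such vertex sets $W$, and an unfilled $W$ would be an $(m+1)$-st copy of $\C^*_d$ in $\skel_{d-1}\X$. The idea is that each such $\C^*_d$ in $\skel_{d-1}(\X) = \Delta(r-1)$ forces an interior vertex $b_i$ in $\Gamma$ (or would need to, were it a genuine cell), so the number of such $W$ is exactly $f_{\{d+1\}}(\Gamma^\circ) = m$ by Part~2 of Lemma~\ref{lem:stacked-interior-hnums} with $T=\{d+1\}$, provided $d+2-r \ge 1$, i.e. $r \le d+1$, which always holds. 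I would make this precise by showing that every $\C^*_d$-subcomplex on a color-complete $2d$-set $W$ in $\Delta(r-1)$ must already be the boundary of a cell, because otherwise $\Delta(r-1)$ would contain a subcomplex forcing $\overline{g}_r > 0$ or violating the homology-sphere condition for $\Delta$ (a non-filled cross-polytope boundary would create extra homology in $\Gamma$ or an extra facet-ridge incidence). Alternatively—and this may be cleaner—I would directly invoke that $\overline{\Delta}(r-1)$ is the \emph{maximal} cross-polytopal complex with $\skel_{r-1}(\overline{\Delta}(r-1)) = \skel_{r-1}(\Delta)$, so $\X \subseteq \overline{\Delta}(r-1)$, and then show any cross-polytope boundary in $\skel_{d-1}\X$ that fails to bound a cell of $\X$ could be added, enlarging $\X$ while keeping it a homology ball with the same boundary—contradicting an Euler-characteristic or $h$-vector count (the filled and unfilled versions have different $f_d$ but the same boundary, and Theorem~\ref{thm:balancedGLBC-computation} forces $f_d = m = h_T(\partial\Gamma)$ to be determined by $\Delta$ alone).

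The main obstacle I anticipate is Part~1 in the range $|T| > d-r+1$: here Theorem~\ref{thm:balancedGLBC-computation} does not directly apply, and I will need to pass to a link. The cleanest route is probably an induction on $d-|T|$ (or on dimension): if $F$ is a missing face of dimension $k \ge r$ with $k+1 > d-r+1$, pick a vertex $w$ of $F$, consider $\lk_\X(w)$ and $\lk_\Delta(w)$; the latter is a balanced homology $(d-2)$-sphere, and one checks that $\lk_\X(w)$ witnesses the balanced $(r-2)$-stacked (or $(r-1)$-stacked, after reindexing colors) property of $\lk_\Delta(w)$, with $F \setminus \{w\}$ a missing face of one dimension lower. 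Getting the bookkeeping of colors and the stackedness parameter right in this link reduction—and handling the base case where the missing face has dimension exactly $r$ in a sphere of dimension $2r-2$ or $2r-1$—is where the ``additional twists'' the authors mention will be concentrated.
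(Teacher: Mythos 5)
Your approach is genuinely different from the paper's (which argues topologically, via Alexander duality and a Mayer--Vietoris induction in the auxiliary $d$-sphere $\Lambda = \Gamma \cup (b_0 * \Delta)$, with $\Gamma$ as in Notation~\ref{stellar-subdiv}), and there are real gaps in both parts. For Part~1, the flag $h$-number computations in Theorem~\ref{thm:balancedGLBC-computation} and Lemma~\ref{lem:stacked-interior-hnums} are \emph{unconditional}: they give $h_T(\Delta) = m$ in the stated range for \emph{any} $(r-1)$-stacked decomposition $\X$, whether or not $\X$ has missing faces. A missing face of $\X$ is a nonface, so it is not counted by any $f_S$ or $h_T$ of $\X$, $\Gamma$, or $\Delta$, and there is no ``deficiency'' for the $h$-numbers to see --- they simply do not detect it. Your fallback link reduction is sound as far as it goes (one can check that $\lk_{\X}(w)$ is an $(r-1)$-stacked decomposition of $\lk_\Delta(w)$ and that $F\setminus\{w\}$ is a missing face of $\lk_\X(w)$ of dimension one less), but it only reduces a missing face of dimension $k>r$ down to one of dimension exactly $r$, and you have no base-case argument for dimension $r$: the rigidity machinery of Corollary~\ref{rigidity-corollary} and Lemma~\ref{missing_faces} controls only $g_2 = h_2-h_1$, so it touches $r=2$ and nothing higher. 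The paper avoids a base case entirely by showing $\widetilde{H}_{|F|-2}(\X[F];\field)=0$ via Alexander duality in $\Lambda$ plus a Mayer--Vietoris induction over the cells $C_i$.

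For Part~2, the central assertion in your second route --- that one could fill an unfilled cross-polytopal boundary and thereby ``enlarge $\X$ while keeping it a homology ball with the same boundary'' --- is false. If $\X$ is a homology $d$-ball and $S\subset\X$ is a $(d-1)$-sphere, then $S$ bounds a $d$-chain in $\X$ (since $H_{d-1}(\X;\field)=0$), and gluing a new $d$-cell $C$ along $S$ creates a nontrivial $d$-cycle, so $H_d(\X\cup C;\field)\neq 0$ and $\X\cup C$ is not a homology ball; the ``$m$ is determined by $\Delta$, hence unique'' argument never starts. Your first route (counting cross-polytopal boundaries in $\skel_{d-1}\X$) simply restates what is to be proved: the identity $f_{\{d+1\}}(\Gamma^\circ)=m$ is true by construction of $\Gamma$ (there are exactly $m$ cone points $b_1,\dots,b_m$) and says nothing about how many $2d$-subsets $W$ induce a $\C^*_d$ in $\skel_{d-1}\X$. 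The paper instead Alexander-dualizes $\Lambda[W]$ against $\Lambda[V(\Lambda)-W]$ and shows the latter is connected (every vertex is within distance two of $b_0$, using that each $C_i$ has a vertex outside $W$), contradicting $\beta_0(\Lambda[V(\Lambda)-W])=\beta_{d-1}(\Lambda[W])=1$.
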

\begin{proof} Throughout this proof we use the $d$-cells $C_i$ ($i=1,\ldots, m$), the vertices $b_i$ of color $d+1$, and the $d$-dimensional simplicial complex $\Gamma$  as defined in Notation \ref{stellar-subdiv}. Also, as in the proof of Lemma \ref{lem:balanced-symmetry}, we introduce one more vertex, $b_0$, of color $d+1$ and define $\Lambda=\Gamma\cup(b_0*\Delta)$. Then $\Lambda$ is a balanced $\field$-homology $d$-sphere, and $V(\Lambda)=V(\Delta)\amalg V_{d+1}$, where $V_{d+1}=\{b_0, b_1,\ldots, b_m\}$. 

Let $F\subseteq V(\Delta)=V(\X)$ be a set of size $\ell+1$, where $\ell\ge r\ge 2$, such that all proper subsets of $F$ are faces of $\X$; in particular $|F|\leq d$ (by balancedness of $\Delta$). For Part 1, we must show that $F\in\X$, or, equivalently, that $\beta_{\ell-1}(\X[F];\field)=0$. Since $\skel_{d-1}(\X)$ is an induced subcomplex of both $\Gamma$ and $\Lambda$, $\X[F]=\Gamma[F]=\Lambda[F]$. Thus, by Alexander duality, 
\[
\beta_{\ell-1}(\X[F];\field)=\beta_{\ell-1}(\Lambda[F];\field)=\beta_{d-\ell}(\Lambda[V(\Lambda)-F];\field),
\]
and so to complete the proof of Part 1, it suffices to show that $\beta_{d-\ell}(\Lambda[V(\Lambda)-F];\field)=0$.

Set $\Lambda_i:=\Lambda[\{b_0,b_1,\ldots, b_i\}\cup V(\Delta)-F]$ for $0\leq i \leq m$. We will prove by induction on $i$ that $\beta_{d-\ell}(\Lambda_i;\field)=0$. As $\Lambda_m=\Lambda[V(\Lambda)-F]$, this will imply Part 1 of the lemma.

Since $d-\ell\leq d-r$ and since $\X$ is an $(r-1)$-stacked cross-polytopal decomposition of $\Delta$,  we have
\[
\skel_{d-\ell}(\Lambda_0)=\skel_{d-\ell}(b_0*\Delta[V(\Delta)-F]).
\]
Also, by definition of $\Lambda$, $\Lambda_0\supseteq b_0*\Delta[V(\Delta)-F]$. Hence, by definition of simplicial homology and since $b_0*\Delta[V(\Delta)-F]$ is a cone, we obtain that 
\[
\beta_{d-\ell}(\Lambda_0;\field)\leq \beta_{d-\ell}(b_0*\Delta[V(\Delta)-F];\field) =0.
\]
This establishes the  $i=0$ case of the induction.

For the induction step notice that
\[
\Lambda_{i+1}=\Lambda_i \bigcup \left(b_{i+1}*(\partial C_{i+1})[V(C_{i+1})-F]\right)
\quad \mbox{and} \quad \Lambda_{i+1}\cap\Lambda_i=(\partial C_{i+1})[V(C_{i+1})-F].
\]
In other words, to obtain $\Lambda_{i+1}$ from $\Lambda_i$, we attach to $\Lambda_i$ a \textit{cone} that intersects $\Lambda_i$ along a $(d-1)$-sphere or a $(d-1)$-ball: the former happens if $F$ is disjoint from $C_{i+1}$, as in this case, $\Lambda_{i+1}\cap\Lambda_i=\partial C_{i+1}\cong \C^*_d$, while the latter happens if $F$ intersects $C_{i+1}$, as in this case $\Lambda_{i+1}\cap\Lambda_i$ is the antistar of a non-empty face in $\C^*_d$. In either case, since $\ell\geq r\geq 2$ and since all but the top homology of the $(d-1)$-sphere (resp.~ball) vanish, a simple application of the Mayer-Vietoris sequence shows that $\beta_{d-\ell}(\Lambda_{i+1};\field)=\beta_{d-\ell}(\Lambda_i;\field)=0$. Part 1 of the lemma follows.

For Part 2, assume that $\X[W]$ is not a $d$-cell of $\X$. Then $\Lambda[W]=(\skel_{d-1}\X)[W]\cong \C^*_d$. Therefore, by Alexander duality,
\[
1=\beta_{d-1}(\Lambda[W];\field)=\beta_{0}(\Lambda[V(\Lambda)-W];\field),
\]
and so the graph of $\Lambda[V(\Lambda)-W]$ must be disconnected. On the other hand, all vertices of $V(\Delta)-W$ are connected to $b_0$ (by definition of $\Lambda$). In addition, since $\X[W]$ (by our assumption) is not one of the cells $C_i$ of $\X$, each $C_i$ ($i=1,\ldots, m$) has at least one vertex in $V(\Delta)-W$, and hence $b_i$ is connected to that vertex. This shows that the graph of $\Lambda[V(\Lambda)-W]$ is connected (in fact, has diameter 2), which is a contradiction. Part 2 follows. \end{proof}

We are now in a position to prove Theorem \ref{uniqueness_thm}. For Part 2 of the theorem, we need to extend the notion of vertex links from simplicial complexes to cross-polytopal complexes. If $C$ is a $d$-dimensional cell (cross-polytope) whose boundary is $\{u_1,v_1\}*\cdots*\{u_d,v_d\}$, then the link of $u_1$ in $C$, $\lk_C(u_1)$, is defined as the $(d-1)$-cell whose boundary is $\{u_2,v_2\}*\cdots*\{u_{d},v_{d}\}$. (The links of other vertices are defined in a similar way.) If $\X$ is a $d$-dimensional cross-polytopal complex, $v$ is a vertex of $\X$, and $C_{i_1},\ldots, C_{i_k}$ are the $d$-cells that contain $v$, then define the \textit{link of $v$ in $\X$}, $\lk_\X(v)$, as $\lk_{\skel_{d-1}(\X)}(v)$ (this is the ``simplicial" part of the link) together with all the $(d-1)$-cells $\lk_{C_{i_j}}(v)$, $j=1,\ldots, k$. Note that if $\|\X\|$ is a homology $d$-manifold with boundary, and $v$ is \textit{not} an interior vertex of $\X$, then for every two $d$-cells $C, C'$ containing $v$, the vertex sets of $\lk_C(v)$ and $\lk_{C'}(v)$ are not the same, and so $\lk_\X(v)$ is a $(d-1)$-dimensional cross-polytopal complex (in the sense of Definition \ref{cross-polytopal-complex}). \newline

\noindent\textit{Proof of Theorem \ref{uniqueness_thm}: \ } For Part 1, since $\X$ is an $(r-1)$-stacked cross-polytopal decomposition of $\Delta$, $\skel_{d-r}(\X)=\skel_{d-r}(\Delta)=\skel_{d-r}(\overline{\Delta}(d-r))$. As $r-1\leq d-r$, Lemma \ref{no_missing_faces} (along with Definition \ref{Delta(j)}) then implies that $\X=\overline{\Delta}(d-r)$ and that $\overline{\Delta}(d-r)$ has no missing faces of dimension $\geq r$. Thus we also have that $\overline{\Delta}(d-r)=\overline{\Delta}(r-1)$, and Part 1 follows.

For Part 2, let $r\leq d/2$, let $\Delta$ be a balanced $(d-1)$-dimensional homology manifold, and let $\X$ be an $(r-1)$-stacked cross-polytopal decomposition of $\Delta$. By Definition \ref{Delta(j)}, we then have $\X\subseteq \overline{\Delta}(d-r)\subseteq \overline{\Delta}(r)$, and to complete the proof it remains to show that $\X\supseteq  \overline{\Delta}(r)$. To do so, we rely on Part 1.

Let $v$ be a vertex of $\X$. Then $\lk_\X(v)$ is a $(d-1)$-dimensional cross-polytopal complex, and since $\skel_{d-r}(\X)=\skel_{d-r}(\Delta)$, we infer that 
\[\skel_{(d-1)-r}(\lk_\X(v))=\skel_{(d-1)-r}(\lk_\Delta(v)).
\]
Thus $\lk_\X(v)$ is an $(r-1)$-stacked cross-polytopal decomposition of $\lk_\Delta(v)$. As $\lk_\Delta(v)$ is a balanced homology $(d-1)$-sphere and $r\leq \frac{d}{2}=\frac{(d-1)+1}{2}$, Part 1 of the theorem yields
\begin{equation} \label{overline_link}
\lk_\X(v)=\overline{\lk_\Delta(v)}(r-1) \qquad \mbox{for all } v\in V(\X)=V(\Delta).
\end{equation}

We claim that 
\begin{equation} \label{more_overline_links}
\lk_{\overline{\Delta}(r)}(v)\subseteq \overline{\lk_\Delta(v)}(r-1) \qquad \mbox{for all } v\in V(\X)=V(\Delta).
\end{equation}
Indeed, let $F\in \lk_{\overline{\Delta}(r)}(v)$ be a face of dimension $\leq d-2$. Then $F\cup \{v\}\in \overline{\Delta}(r)$. Therefore, $\skel_{r}(F\cup \{v\})\subseteq \Delta$, so $\skel_{r-1}(F)\subseteq \lk_\Delta(v)$, and hence $F\in \overline{\lk_\Delta(v)}(r-1)$. On the other hand, if $F\in \lk_{\overline{\Delta}(r)}(v)$ is a $(d-1)$-face (thus, a cross-polytope), then by definition of $\overline{\Delta}(r)$ (see Definition \ref{Delta(j)}), there is $w\in V(\Delta)$ of the  same color as $v$ such that the $r$-skeleton of the cross-polytope on the vertex set $V(F)\cup\{v,w\}$ is contained in $\Delta$. Then $\skel_{r-1}(F)\subseteq \lk_\Delta(v)$, and so $F\in \overline{\lk_\Delta(v)}(r-1)$ in this case as well. This verifies eq.~(\ref{more_overline_links}).

Comparing equations (\ref{overline_link}) and (\ref{more_overline_links}), we conclude that $\X\supseteq  \overline{\Delta}(r)$, and the result follows. \hfill$\square$

\section{Balanced sphere bundles over $\mathbb{S}^1$}

The starting point of this section is motivated by the following question: in the class of all balanced triangulations of all closed $(d-1)$-manifolds that are not spheres, which complexes have the smallest number of vertices and how many vertices do they have? We show that in the class of homology manifolds that are not homology spheres the answer is $3d$, and that the same result holds for piecewise-linear (PL, for short) manifolds that are not PL spheres.

\begin{proposition}  \label{at_least_3d}
Let $\Delta$ be a $(d-1)$-dimensional balanced  $\field$-homology manifold without boundary that is not a $\field$-homology sphere.  Then $\Delta$ has at least three vertices of each color, and hence $f_0(\Delta) \geq 3d$.  The same result holds in the PL category.
\end{proposition}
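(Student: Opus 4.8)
The plan is to argue by contradiction: I will show that if some color class of $\Delta$ has at most two vertices, then $\Delta$ must be a (homology, respectively PL) sphere. By symmetry we may assume the color class $V_d$ of color $d$ satisfies $|V_d|\le 2$; write $U:=V_d$, and note that we may assume $d\ge 2$. The starting point is two standard facts about a closed homology (resp.\ PL) $(d-1)$-manifold $\Delta$: it is pure of dimension $d-1$; the link of every vertex is a homology (resp.\ PL) $(d-2)$-sphere; and every $(d-2)$-face $R$ of $\Delta$ lies in exactly two facets (indeed $\lk_\Delta(R)$ is a $0$-dimensional complex with $\widetilde{H}_*(\lk_\Delta(R);\field)\cong\widetilde{H}_*(\mathbb{S}^0;\field)$, hence consists of exactly two vertices).

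Next I would record two bookkeeping facts. First, $\Delta_{[d-1]}$ is pure of dimension $d-2$, and its facets are precisely the $(d-2)$-faces of $\Delta$ whose color set is $[d-1]$: every facet $\sigma$ of $\Delta$ meets each color class exactly once, so $\sigma\setminus V_d$ is such a face, and every face of $\Delta_{[d-1]}$ lies in one of them. Second, for any $w\in U$ the link $\lk_\Delta(w)$ is a subcomplex of $\Delta_{[d-1]}$, since $w$ is adjacent only to vertices of colors $\ne d$. The crux is then the following counting observation: if $R$ is a facet of $\Delta_{[d-1]}$ (equivalently, a $(d-2)$-face of $\Delta$ with color set $[d-1]$), then the two facets of $\Delta$ containing $R$ have the form $R\cup\{x\}$ with $x\in U$; hence there are \emph{exactly two} vertices $x\in U$ with $R\in\lk_\Delta(x)$.

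Now split into cases. If $|U|=1$ this is absurd, because $\Delta$ has a facet, so $\Delta_{[d-1]}$ has a facet $R$, yet $U$ does not contain two distinct vertices $x$ with $R\in\lk_\Delta(x)$. If $|U|=2$, say $U=\{u,v\}$, then every facet of $\Delta_{[d-1]}$ lies in both $\lk_\Delta(u)$ and $\lk_\Delta(v)$; since these are subcomplexes of the pure complex $\Delta_{[d-1]}$ that contain all of its facets, we conclude $\lk_\Delta(u)=\lk_\Delta(v)=\Delta_{[d-1]}=:L$. Because every facet of $\Delta$ contains $u$ or $v$, purity gives $\Delta=\bigl(u*\lk_\Delta(u)\bigr)\cup\bigl(v*\lk_\Delta(v)\bigr)$, so $\Delta$ is the suspension of $L$ with apexes $u$ and $v$. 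But $L=\lk_\Delta(u)$ is a homology (resp.\ PL) $(d-2)$-sphere, so its suspension $\Delta$ is a homology (resp.\ PL) $(d-1)$-sphere, contradicting the hypothesis. Hence each of the $d$ color classes has at least three vertices, and $f_0(\Delta)=\sum_{i=1}^{d}|V_i|\ge 3d$. I do not expect a serious obstacle: the only points requiring care are extracting the two manifold facts uniformly in the homology and PL categories (in particular that a vertex link in a PL manifold is a PL sphere, and that a suspension of a PL/homology sphere is again one) and the routine purity bookkeeping for $\Delta_{[d-1]}$ and the vertex links.
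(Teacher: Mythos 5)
Your proof is correct and follows essentially the same route as the paper's: show that $\Delta_{[d]\setminus\{j\}}$ is pure, observe that each of its facets lies in exactly two facets of $\Delta$ (forcing at least two vertices of color $j$), and then show that if there are exactly two such vertices their links coincide with $\Delta_{[d]\setminus\{j\}}$, so $\Delta$ is a suspension of a homology (resp.\ PL) $(d-2)$-sphere and hence itself a homology (resp.\ PL) $(d-1)$-sphere, contradicting the hypothesis. Your write-up is slightly more explicit in two places --- it derives the "every ridge is in exactly two facets" fact directly from the homology-manifold definition rather than via the normal-pseudomanifold blanket statement, and it separates the $|U|=1$ and $|U|=2$ cases --- but this is the same argument with no substantive deviation.
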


\begin{proof} The statement is easy if $d=1$, so assume $d\geq 2$, and pick a color $j\in[d]$. Since $\Delta$ is pure, $\Delta_{[d]-j}$ is pure as well. Also, since $\Delta$ is a normal pseudomanifold, each facet of $\Delta_{[d]-j}$ is contained in two facets of $\Delta$. Thus there are at least two vertices of color $j$. Moreover, if there are exactly two vertices of color $j$, say, $u_1$ and $u_2$, it must be the case that $\lk_\Delta(u_1)=\Delta_{[d]-j}=\lk_\Delta(u_2)$. Therefore, in this latter case, $\Delta$ is the suspension of $\lk_{\Delta}(u_1)$.  Since $\lk_{\Delta}(u_1)$ is a $\field$-homology sphere and the suspension of a $\field$-homology sphere is also a $\field$-homology sphere, it follows that $\Delta$ is a $\field$-homology sphere.  Therefore $\Delta$ has at least three vertices of each color, which proves the desired result. The same proof applies verbatim for the PL category by replacing `homology sphere' with `PL sphere' throughout.
\end{proof}

Our goal now is, for each $d$, to exhibit a non-simply connected element of the balanced Walkup class $\BH^d$ (and hence a PL manifold that is not a sphere) with \textit{exactly} $3d$ vertices, see Theorem \ref{thm:3d-vertex-construction}. The underlying topological space of such a complex will be the $\mathbb{S}^{d-2}$-bundle over $\mathbb{S}^1$ that is orientable if $d$ is odd and nonorientable if $d$ is even. (Recall from \cite{Steenrod} that up 
to homeomorphism there are only two spherical bundles over $\mathbb{S}^1$ -- the trivial bundle, which is orientable, and the nonorientable bundle.)  Our construction can be considered a balanced analog of K\"uhnel's construction \cite{Kuhnel}. 

To start, suppose  $\widetilde{\Delta}$ is a balanced simplicial $(d-1)$-sphere with at least $3d$ vertices.  Suppose further there are two disjoint facets, $F = \{v_1,\ldots,v_d\}$ and $F' = \{v'_1,\ldots,v'_d\}$, in $\widetilde{\Delta}$ such that (1) $v_i$ and $v'_i$ have color $i$ for all $i$, and (2) $v_i$ and $v'_i$ do not have any common neighbors for all $i$.  Let $\Delta = \widetilde{\Delta}^{\phi}$ be the simplicial complex obtained from $\widetilde{\Delta}$ through a balanced handle addition that identifies $F$ with $F'$.  The following lemma will be useful in determining whether or not $\Delta$ is orientable.  Its first part is a variant of \cite[Proposition 11]{Joswig} and might be of interest in its own right.

\begin{lemma} \label{orientability_lemma} Let $\Delta$ be a balanced $(d-1)$-dimensional simplicial complex with $d\geq 3$. 
\begin{enumerate}  
\item  If $\Delta$ is a normal pseudomanifold, then $\Delta$ is orientable (i.e., $\widetilde{H}_{d-1}(\Delta; \Z)\cong \Z$) if and only if the facet-ridge graph of $\Delta$ is bipartite (i.e., $2$-colorable).
\item Assume that $\Delta$ is a connected, orientable simplicial manifold, and let $\Delta^\phi$ be obtained from $\Delta$ by a balanced handle addition that identifies facets $F$ and $F'$ of $\Delta$. Then $\Delta^\phi$ is orientable if and only if the distance from $F$ to $F'$ in the facet-ridge graph of $\Delta$ is odd.  
\end{enumerate}
\end{lemma}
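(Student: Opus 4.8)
The plan is to prove the two parts in order, using the standard fact that for a normal $(d-1)$-pseudomanifold $\Delta$, the top reduced homology $\widetilde H_{d-1}(\Delta;\Z)$ is either $\Z$ or $0$, and it is $\Z$ precisely when one can coherently orient the facets, i.e.\ assign $\pm1$ to each facet so that for every ridge the two facets containing it induce opposite orientations on it. I will phrase orientability combinatorially: a consistent choice of signs on the facets so that adjacent facets in the facet-ridge graph get opposite signs. This is exactly a proper $2$-coloring of the facet-ridge graph, which exists if and only if that graph is bipartite (it is connected since $\Delta$ is a normal pseudomanifold). That gives Part 1. The slightly delicate point to get right is why the ``opposite orientation on the shared ridge'' condition is equivalent to being able to flip a global sign: one should recall that two orderings of a $(d-1)$-simplex's vertices that agree up to even permutation give the same orientation, that each ridge sits in exactly two facets, and that a cycle in the boundary chain complex in top degree is a scalar multiple of the sum of all facets with coherent signs; the existence of such a nonzero cycle forces (and is forced by) the $2$-colorability. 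I would cite that $\widetilde H_{d-1}$ is torsion-free of rank $\le 1$ here (a standard pseudomanifold fact) rather than reprove it.

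For Part 2, write $\Delta^\phi$ as the quotient of $\Delta$ obtained by deleting the facets $F$ and $F'$ and gluing $\partial F$ to $\partial F'$ via $\varphi$. The facet-ridge graph $G'$ of $\Delta^\phi$ is obtained from that of $\Delta$, call it $G$, by deleting the two vertices $F$ and $F'$ and, for each of the $d$ ridges $F\setminus\{v_i\}$ (which in $\Delta$ joined $F$ to some facet $A_i$) and the corresponding ridge $F'\setminus\{v'_i\}$ (joining $F'$ to some facet $B_i$), inserting a new edge between $A_i$ and $B_i$. (One must check these $A_i$ are genuine neighbors and that the $\varphi$-condition about no common neighbors guarantees $A_i\neq B_i$, so no loops or collapses occur; this also ensures $\Delta^\phi$ is a complex, as already noted in the excerpt.) Now apply Part 1 to both $\Delta$ and $\Delta^\phi$: $\Delta$ orientable means $G$ is bipartite, with a $2$-coloring $c$. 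Since $F$ and $F'$ are adjacent to the $A_i$'s and $B_i$'s respectively, in $\Delta^\phi$ the graph $G'$ is bipartite iff we can $2$-color it; the obstruction is precisely whether the new edges $A_iB_i$ are consistent with the coloring inherited from $G$ restricted to $V(G)\setminus\{F,F'\}$. Each original edge $FA_i$ forces $c(A_i)\neq c(F)$, so all $A_i$ share color $1-c(F)$; likewise all $B_i$ share color $1-c(F')$. Hence the new edges $A_iB_i$ are all ``good'' (endpoints differently colored) iff $c(F)\neq c(F')$, i.e.\ iff $F$ and $F'$ lie at odd distance in $G$.

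The one subtlety to handle carefully is that after deleting $F$ and $F'$ the graph $G'$ could in principle be disconnected or the inherited $2$-coloring could fail to be ``rigid,'' so that the parity argument above is not the whole story. I would address this by arguing the other direction too: if $F$ and $F'$ are at \emph{even} distance, then any path in $G$ from $F$ to $A_i$ (length odd) together with an edge $A_iB_i$ and a path back from $B_j$ to $F'$ produces, after deleting $F,F'$, a closed walk in $G'$ of odd length through the new edges, so $G'$ contains an odd cycle and is non-bipartite, hence $\Delta^\phi$ is non-orientable; conversely, if the distance is odd, the coloring $c$ restricted to $V(G)\setminus\{F,F'\}$ is a valid $2$-coloring of $G'$ (old edges are fine, new edges $A_iB_i$ are fine by the color computation), so $G'$ is bipartite and $\Delta^\phi$ is orientable. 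The main obstacle is thus bookkeeping: making precise how $G'$ is built from $G$, verifying the $\varphi$-hypothesis prevents degenerate edges, and confirming that $\Delta^\phi$ is itself a connected normal pseudomanifold (indeed a simplicial manifold, since handle addition to a manifold yields a manifold) so that Part 1 applies to it. None of these steps is deep, but they must be stated cleanly.
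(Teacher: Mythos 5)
Your Part~1 has a genuine gap: you assert that orientability ``is exactly a proper $2$-coloring of the facet-ridge graph,'' but this equivalence is \emph{false} for general normal pseudomanifolds and your argument never invokes balancedness, which is the crux of the lemma. Counterexample: the boundary of the $3$-simplex is an orientable $2$-sphere, yet its facet-ridge graph is $K_4$, which is not bipartite. The general orientability condition is that the two facets meeting a ridge induce \emph{opposite} orientations on it, and whether that translates to ``opposite signs $\alpha_{F'}=-\alpha_{F''}$'' or to ``same signs $\alpha_{F'}=\alpha_{F''}$'' depends on the positions of the removed vertices in the chosen orderings of the two facets. The paper's proof handles this precisely by exploiting the balanced structure: fix a total order on $V(\Delta)$ that respects the coloring. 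Then for two neighboring facets sharing a ridge $G$, the complementary color $j=[d]-\kappa(G)$ is the same for both, so the omitted vertex sits in position $j$ in \emph{both} ordered facets, and the boundary map contributes the same sign $(-1)^{j-1}$ from each. Only after this observation does the cycle condition $\partial\alpha=0$ reduce to $\alpha_{F'}=-\alpha_{F''}$ on every edge of the facet-ridge graph, which is exactly a proper $2$-coloring. Without a color-respecting order your combinatorial reformulation of orientability is simply not valid, so the whole of Part~1 needs this extra step.

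Your Part~2 is essentially the paper's argument and is sound in outline: describe $G'$ as $G$ with $F,F'$ deleted and edges $F_iF'_i$ added, observe that under the inherited $2$-coloring all $F_i$ share one color and all $F'_i$ the other, and conclude bipartiteness of $G'$ iff $c(F)\neq c(F')$ iff $\dist(F,F')$ is odd. The one point you flag but leave loose is the construction of an odd cycle in $G'$ when $\dist(F,F')$ is even: you need a path from $F_1$ to $F'_1$ in $G$ that avoids both $F$ and $F'$. The paper secures this by citing Barnette's result that the facet-ridge (dual) graph of a $(d-1)$-pseudomanifold is $d$-connected, so for $d\geq 3$ removing the two vertices $F,F'$ leaves the graph connected; you should make this citation explicit rather than leaving it at the level of a heuristic.
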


\begin{proof} To verify Part 1, we fix a total order $\prec$ on the vertices of $\Delta$ that respects colors; in other words, the only restriction we place on $\prec$ is that if $\kappa(v)<\kappa(w)$, then $v\prec w$. 

Assume first that $\Delta$ is orientable, and let $\alpha=\sum \alpha_F [F]$ be a $(d-1)$-cycle that generates $\widetilde{H}_{d-1}(\Delta; \Z)$. Here the sum is over the facets of $\Delta$, $[F]$ denotes the ordered $d$-tuple of vertices of $F$ (so $[F]=[v_1,\ldots,v_d]$, where $\kappa(v_j)=j$), and $\alpha_F\in\{\pm 1\}$. Let $F'$ and $F''$ be two neighboring facets of $\Delta$, and let $G$ be their common ridge. Then $[d]-\kappa(G)$ consists of only one color, say, $j$. Since $\Delta$ is a pseudomanifold, $F$ and $F'$ are the only facets that contain $G$, and we obtain that the coefficient of $[G]$ in $\partial(\alpha)$ is equal to $(-1)^{j-1}\alpha_{F'}+(-1)^{j-1}\alpha_{F''}$. Thus for $\partial(\alpha)$ to be $0$, we must have 
\[
\alpha_{F'}=-\alpha_{F''} \quad \mbox{for every two neighboring facets } F', F''.
\]
Therefore, for any two facets, $F'$ and $F''$ of $\Delta$, the lengths of all paths between $F'$ and $F''$ in the facet-ridge graph of $\Delta$ must have the same parity (odd if $\alpha_{F'}=-\alpha_{F''}$ and even otherwise). Hence the facet-ridge graph of $\Delta$ is bipartite.

 Conversely, suppose the facet-ridge graph of $\Delta$ is bipartite, so we can refer to the facets of $\Delta$ as ``red" or ``blue" according to the chosen 2-coloring of this graph. Define $\alpha_F:=1$ if $F$ is red and $\alpha_F:=-1$ if $F$ is blue, and set $\alpha:=\sum \alpha_F[F]$. The same computation as in the previous paragraph then shows that $\partial(\alpha)=0$. Hence $\alpha$ is a non-zero element of $\widetilde{H}_{d-1}(\Delta; \Z)$, and so $\Delta$ is orientable.

For Part 2, since $\Delta$ is orientable, we can assume (by Part 1) that the facet ridge graph of $\Delta$ is endowed with a proper 2-coloring. Let $F_1,\ldots, F_d$ be the neighboring facets of $F$ and let $F'_1,\ldots, F'_d$ be the neighboring facets of $F'$, where the indexing is chosen so that $\kappa(F\cap F_i)=\kappa(F'\cap F'_i)=[d]-\{i\}$. Then the facet-ridge graph of $\Delta^\phi$ is obtained from the facet-ridge graph of $\Delta$ by removing the two vertices corresponding to $F$ and $F'$, and connecting the vertices corresponding to $F_i$ and $F'_i$ by an edge for each $i\in[d]$. 

If the distance from $F$ to $F'$ in the facet-ridge graph of $\Delta$ is odd, then $F$ and $F'$ have opposite colors. Consequently, $F_i$ and $F'_i$ have opposite colors (for all $i$), and hence adding an edge between the vertices corresponding to $F_i$ and $F'_i$ (for each $i\in[d]$) does not affect the 2-colorability of the graph. Thus, in this case, the facet-ridge graph of $\Delta^\phi$ is also bipartite, and hence $\Delta^\phi$ is orientable.

On the other hand, if the distance from $F$ to $F'$ in the facet-ridge graph of $\Delta$ is even, then the distance from $F_1$ to $F'_1$ is also even, so every path from $F_1$ to $F'_1$ has an even length in this graph. Let $F_1=H_0, H_1,\ldots, H_{2k-1}, H_{2k}=F'_1$ be such a path that passes neither through $F$ nor through $F'$ (it exists since $d\geq 3$ and the facet-ridge graph of $\Delta$ is $d$-connected, see \cite{Barnette-connectivity}). Then the same path together with an added edge between $F_1$ and $F'_1$ forms an odd cycle in the facet-ridge graph of $\Delta^\phi$. Thus, the facet-ridge graph of $\Delta^\phi$ is not bipartite, and so $\Delta^\phi$ is nonorientable.
\end{proof}

We are now in a position to describe our $3d$-vertex construction.

\begin{theorem} \label{thm:3d-vertex-construction} There exists a balanced simplicial manifold $\BM_{d}$ with $3d$ vertices that triangulates $\mathbb{S}^{d-2} \times \mathbb{S}^1$ if $d$ is odd, and the nonorientable $\mathbb{S}^{d-2}$-bundle over $\mathbb{S}^1$ if $d$ is even.
\end{theorem}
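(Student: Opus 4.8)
The plan is to build $\BM_d$ by the recipe set up just before the theorem: start from a carefully chosen balanced simplicial $(d-1)$-sphere $\widetilde\Delta$ with $3d$ vertices possessing two disjoint facets $F=\{v_1,\dots,v_d\}$ and $F'=\{v_1',\dots,v_d'\}$ with $\kappa(v_i)=\kappa(v_i')=i$ and with $v_i$, $v_i'$ having no common neighbor, and then set $\BM_d:=\widetilde\Delta^{\varphi}$ for the color-preserving identification $\varphi(v_i)=v_i'$. The natural candidate for $\widetilde\Delta$ is the stacked cross-polytopal sphere $\ST^\times(3d,d-1)=\C_d^*\#\C_d^*\#\C_d^*$, which has exactly $3d$ vertices and $d$ vertices of each color; one realizes it as a ``cyclic chain'' of three cross-polytopes so that the ``first'' cross-polytope $\C_d^*$ contributes a facet $F$ and the ``last'' contributes a disjoint facet $F'$, with the balanced connected sums performed along facets disjoint from both $F$ and $F'$. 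First I would verify carefully that in this model $F$ and $F'$ are disjoint, that they are genuine facets of $\widetilde\Delta$, and that $v_i$ and $v_i'$ share no neighbor — the last point is where the ``three copies'' (as opposed to two) is needed, since it guarantees that $F$ and $F'$ sit in different constituent cross-polytopes with a buffer cross-polytope between them, so no edge joins $v_i$ to $v_i'$ through an identified vertex. Granting this, the handle addition $\widetilde\Delta^{\varphi}$ is a well-defined balanced simplicial complex, and it is a simplicial manifold because handle addition on a sphere always yields a manifold; it is connected and has $3d-d=2d$ vertices of total... no: it still has $3d-d$? — here I must be careful: identifying $F$ with $F'$ merges $d$ vertices, leaving $3d-d=2d$ vertices, which contradicts the claimed count, so in fact the correct model must keep $F$ and $F'$ with genuinely $3d$ vertices total only if we do NOT subtract; rereading the handle-addition definition, $\Delta^\varphi$ has $f_0(\widetilde\Delta)-d$ vertices, so to end at $3d$ vertices I should start from a $4d$-vertex sphere $\ST^\times(4d,d-1)$ (four cross-polytopes in a chain, $F$ from the first, $F'$ from the last). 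This is the construction I would actually carry out.

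Next I would identify the topological type. Since $\widetilde\Delta=\ST^\times(4d,d-1)$ is a PL $(d-1)$-sphere and handle addition along two disjoint facets is the PL operation of removing two open $(d-1)$-balls and gluing their boundary spheres by a simplicial isomorphism, $\BM_d=\widetilde\Delta^\varphi$ is PL-homeomorphic to an $\mathbb S^{d-2}$-bundle over $\mathbb S^1$. By Steenrod's classification there are exactly two such bundles, the orientable (trivial) one $\mathbb S^{d-2}\times\mathbb S^1$ and the nonorientable one, so it remains only to decide orientability of $\BM_d$. For this I would invoke Part 2 of Lemma~\ref{orientability_lemma}: $\BM_d$ is orientable iff the distance from $F$ to $F'$ in the facet–ridge (dual) graph of $\widetilde\Delta$ is odd. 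So the computational heart of the proof is: \emph{compute the dual-graph distance between the chosen facets $F$ and $F'$ in $\ST^\times(4d,d-1)$}. Here I would use the combinatorics of the cross-polytope: the dual graph of $\C_d^*$ is the graph of the $d$-cube $Q_d$, in which the distance between two facets equals the number of coordinates in which the corresponding sign-vectors differ, so the diameter is $d$ and antipodal facets are at distance $d$. In the connected sum $A\#B$ along a facet, the dual graph is obtained by gluing the dual graphs of $A$ and $B$ at the two vertices corresponding to the glued facet (and deleting that shared vertex), so dual-graph distances add up along the chain. Choosing $F$ and $F'$ to be antipodal facets of the first and last cross-polytopes respectively, and choosing the gluing facets appropriately, I would get $\dist(F,F') \equiv$ (a controlled sum of $d$'s and $1$'s) and tune the choices so that this parity is odd exactly when $d$ is odd. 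Concretely, picking all the ``connecting'' facets so that each passage from one cross-polytope to the next costs distance $1$, and each traversal of a cross-polytope from its entry facet to its exit facet costs distance $\equiv d-1$ or is tuned to contribute the right parity, one arranges $\dist(F,F')\equiv d \pmod 2$; then Lemma~\ref{orientability_lemma}(2) gives orientability iff $d$ is odd, as claimed. I would also double-check via $\widetilde H_{d-1}(\BM_d;\Z_2)=\Z_2$ always, consistent with being an $\mathbb S^{d-2}$-bundle over $\mathbb S^1$, and via a direct fundamental-group/monodromy argument that the bundle is nontrivial (the monodromy of the handle is the reflection swapping the two ``sides'', whose sign governs orientability).

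Finally I would record that $\BM_d$ is balanced (the identification $\varphi$ is color-preserving, so the $d$-coloring descends), that it has exactly $3d$ vertices ($4d$ minus the $d$ identified), and that it is a genuine simplicial manifold and not merely a homology manifold (handle addition to a PL sphere yields a PL manifold). The main obstacle I anticipate is the parity bookkeeping in the last paragraph: one must pin down an explicit realization of $\ST^\times(4d,d-1)$ as a chain of cross-polytopes — naming the two glued facets at each connected sum, the two marked facets $F$ and $F'$, and tracking how dual-graph distance accumulates — precisely enough to compute $\dist(F,F')\bmod 2$ and see it equals $d\bmod 2$. This is elementary but fiddly, and getting the sign conventions right (e.g.\ whether traversing a cross-polytope from one facet to the facet used in the next connected sum contributes distance $1$ or distance $d-1$, which have the same parity iff $d$ is even) is where care is required. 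Everything else — balancedness, vertex count, the PL/manifold claims, and the reduction to a parity computation — follows formally from the definitions and from Lemma~\ref{orientability_lemma}.
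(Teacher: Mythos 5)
Your overall strategy — realize $\BM_d$ as a balanced handle addition on a chain of cross-polytopes and decide orientability via the dual-graph distance parity (Lemma~\ref{orientability_lemma}(2)) — is exactly the paper's route. But there are two concrete problems.

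First, a counting error in the corrected model. You correctly deduce that $\widetilde\Delta$ must have $4d$ vertices, and you correctly name it $\ST^\times(4d,d-1)$; but you then describe it as ``four cross-polytopes in a chain.'' By definition $\ST^\times(n,d-1)$ is the $(n/d-1)$-fold connected sum of copies of $\C^*_d$, so $\ST^\times(4d,d-1)$ is a chain of \emph{three} cross-polytopes. If you literally build a chain of four, you get a $5d$-vertex sphere and, after handle addition, a $4d$-vertex manifold, not the required $3d$. The paper uses exactly three cross-polytopes with vertex sets $\{x_i\}\cup\{y_i\}$, $\{y'_i\}\cup\{z_i\}$, $\{z'_i\}\cup\{x'_i\}$, glues $y_i\sim y'_i$ and $z_i\sim z'_i$, and identifies $x_i\sim x'_i$ in the handle.

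Second, the parity computation is not actually carried out, and the ``tune the choices'' step is suspect. Once the handle-addition constraint (no common neighbors between $v_i$ and $\varphi(v_i)$) is imposed, the facets $F=\{x_1,\dots,x_d\}$ and $F'=\{x'_1,\dots,x'_d\}$ are essentially forced to be antipodal to the internal gluing facets in their respective cross-polytopes; so there is no real freedom to tune the parity — it is determined by the construction. You must therefore actually compute the distance. Along the path $(d-1)$ steps inside $\Delta_1$ to reach a neighbor of the glued facet, $1$ step across, $(d-2)$ steps inside $\Delta_2$, $1$ step across, $(d-1)$ steps inside $\Delta_3$, one gets $\dist(F,F')=3(d-1)+1=3d-2$, which has the same parity as $d$ (and since the dual graph of a simplicial sphere is bipartite by Lemma~\ref{orientability_lemma}(1), any single path suffices to determine the parity). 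Hence $\BM_d$ is orientable iff $d$ is odd. Also note that Lemma~\ref{orientability_lemma}(2) requires $d\ge 3$; the case $d=2$ must be handled separately (there $\BM_2\cong\mathbb S^1$, the nonorientable $\mathbb S^0$-bundle). Your framework — dual graph of $\C^*_d$ is $Q_d$, distances accumulate along the chain — is the right one; what is missing is the explicit bookkeeping that pins down $3d-2$.
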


\begin{proof} Let $\Delta_1$, $\Delta_2$, and $\Delta_3$ be boundary complexes of  $d$-dimensional cross-polytopes with $V(\Delta_1)=\{x_1,\ldots,x_d\} \cup \{y_1,\ldots,y_d\}$, $V(\Delta_2)=\{y'_1,\ldots,y'_d\} \cup \{z_1,\ldots,z_d\}$, and $V(\Delta_3)=\{z'_1,\ldots,z'_d\} \cup \{x'_1,\ldots,x'_d\}$, where we assume that each vertex with index $i$ has color $i$. Now let $\widetilde{\Delta} = \Delta_1 \# \Delta_2 \# \Delta_3$, where the first connected sum identifies $y_i$ with $y'_i$ for all $i$, and the second connected sum identifies $z_i$ with $z'_i$ for all $i$.  

Observe that in $\widetilde{\Delta}$, any vertex $x_i$ is only adjacent to vertices in $\{x_1,\ldots,x_d\}\cup\{y_1, \ldots, y_d\}$, and any vertex $x'_i$ is only adjacent to vertices in $\{z_1,\ldots,z_d\}\cup\{x'_1,\ldots,x'_d\}$.  Thus we can perform a balanced handle addition to $\widetilde{\Delta}$ by identifying $x_i$ to $x'_i$ for all $i$ and removing the identified facets $\{x_1,\ldots,x_d\}$ and $\{x'_1,\ldots,x'_d\}$.  Let $\BM_{d}$ denote the resulting simplicial complex. After all identifications, we can write $V(\BM_{d}) = \{x_1,\ldots,x_d,y_1,\ldots,y_d,z_1,\ldots,z_d\}$.  

Since $\widetilde{\Delta}$ is a sphere and $\BM_{d}$ is obtained from $\widetilde{\Delta}$ by handle addition, the resulting space is an $\mathbb{S}^{d-2}$-bundle over $\mathbb{S}^1$.  Finally, since the distance from the facet $\{x_1,\ldots,x_d\}$ to the facet $\{x'_1,\ldots,x'_d\}$ in the facet-ridge graph of $\widetilde{\Delta}$ is $3(d-1)+1=3d-2$, Part 2 of Lemma \ref{orientability_lemma} implies that for $d\geq 3$, $\BM_{d}$ is orientable if $d$ is odd, and it is nonorientable if $d$ is even. For $d=2$, the resulting complex is homeomorphic to $\mathbb{S}^1$, and hence it is the non-orientable $\mathbb{S}^0$-bundle over $\mathbb{S}^1$.
\end{proof}

We now show that $\BM_d$ possesses balanced analogs of virtually all properties that K\"uhnel's construction \cite{Kuhnel} has. We start with the following observation.

\begin{remark} \label{BM_d-prop}
Since there is a natural bijection between the set of facets of $\C^*_d$ on the vertex set $\{x_1,\ldots, x_d,y_1,\ldots,y_d\}$ and the set of words of length $d$ in the alphabet $\{x,y\}$ (e.g., under this bijection the facet $\{x_1,y_2,\ldots, y_d\}$ corresponds to the word $xyy\cdots y$), the construction of $\BM_{d}$ implies that there is a natural bijection between the set of facets of $\BM_{d}$ and the set of words of length $d$ in the alphabet $\{x,y, z\}$ containing exactly two of the letters. It then follows that (i) the group $S_3\times S_d$ acts \textit{vertex transitively} on $\BM_{d}$, where $S_3$ acts by permuting the letters of the alphabet and $S_d$ by permuting the positions of the $d$ letters inside each word; and (ii) for $d\geq 3$, the graph of $\BM_{d}$ is the graph $K_{3,3,\ldots,3}$ --- the complete $d$-partite graph (also known as Tur\'an graph) on $3+3+\cdots +3$ vertices. In other words,  $\BM_{d}$ is a ``\textit{balanced 2-neighborly}" simplicial manifold.
\end{remark}

\begin{proposition} \label{isom-to-BM_d} For all $d\geq 3$, the complex $\BM_d$ is (up to simplicial isomorphism) the only $3d$-vertex element of $\BH^d$ that is not a sphere.
\end{proposition}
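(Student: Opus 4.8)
The plan is to show that a $3d$-vertex non-sphere in $\BH^d$ admits essentially a unique derivation within $\BH^d$, and that this derivation is exactly the recipe defining $\BM_d$. Fix such a $\Delta$; since every element of $\BH^d$ has all colour classes of equal size, each of the $d$ colours is used by exactly three vertices of $\Delta$. Suppose $\Delta$ is assembled from $k$ copies of $\C^*_d$ by $k-1$ balanced connected sums and $q$ balanced handle additions. Each such operation identifies $d$ vertices, so $f_0(\Delta)=d(k+1-q)$ and hence $k+1-q=3$. If $q=0$ then $\Delta$ is an iterated connected sum of $(d-1)$-spheres, hence a sphere; so $q\ge 1$. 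On the other hand $h_1(\Delta)=f_0(\Delta)-d=2d$, while $\overline{g}_2$ is additive under balanced connected sums with $\overline{g}_2(\C^*_d)=0$ and every balanced handle addition raises $\overline{g}_2$ by $4\binom{d}{2}$; hence $\overline{g}_2(\Delta)=4q\binom{d}{2}$, and a short computation yields $f_1(\Delta)=(2q+7)\binom{d}{2}$. But the graph of a balanced complex with three vertices of each colour has at most $9\binom{d}{2}$ edges, so $q\le 1$. Therefore $q=1$, $k=3$, and $f_1(\Delta)=9\binom{d}{2}$: the graph of $\Delta$ is the complete $d$-partite graph $K_{3,3,\dots,3}$ (so $\Delta$ is balanced $2$-neighborly), and $\Delta=\widetilde{\Delta}^{\varphi}$, where $\widetilde{\Delta}$ is a balanced connected sum of three copies of $\C^*_d$ and $\varphi$ a single balanced handle addition.

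Next I would locate the handle addition. A single $\C^*_d$ admits no valid handle addition, since any two disjoint facets are antipodal and the two vertices of each antipodal pair have a common neighbour; likewise, in $\C^*_d\#\C^*_d$ any two vertices of the same colour have a common neighbour (supplied by the middle colour class), so it too admits no valid handle addition. Hence $\varphi$ is applied to $\widetilde{\Delta}=\Delta_1\#\Delta_2\#\Delta_3$ itself, and since three cross-polytopes joined by two connected sums form a path, after relabelling we may assume $\Delta_2$ is glued to $\Delta_1$ along a facet $G$ of $\Delta_2$ and to $\Delta_3$ along a facet $G'\ne G$ of $\Delta_2$.

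Then I would pin down the handle data. Label vertices as in the construction of $\BM_d$: in colour $i$, the two vertices of $\Delta_2$ form an antipodal pair with representatives $g_i\in G$ and $g'_i\in G'$; $\Delta_1$ contributes the further vertex $x_i$, which is $\C^*_d$-antipodal to $g_i$ in $\Delta_1$, and $\Delta_3$ the further vertex $z_i$, antipodal to $g'_i$ in $\Delta_3$. Using only the (rather sparse) adjacency structure of $\widetilde{\Delta}$, one checks that the two facets identified by $\varphi$ cannot lie in $\Delta_2$ and cannot lie in the same summand, so one of them, $F$, is a facet of $\Delta_1$ and the other, $F'$, a facet of $\Delta_3$. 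Writing $F=\{w_1,\dots,w_d\}$, $F'=\{w'_1,\dots,w'_d\}$ with $w_i\in\{x_i,g_i\}$ and $w'_i\in\{z_i,g'_i\}$, a finite case check of the no-common-neighbour requirement, colour by colour, eliminates every possibility except $w_i=x_i$, $w'_i=z_i$ for all $i$, and simultaneously forces $g_j\ne g'_j$ for all $j$. Thus $G$ and $G'$ are antipodal facets of $\Delta_2$, $F$ and $F'$ are the antipodal facets of $G$ and $G'$ inside $\Delta_1$ and $\Delta_3$, and, since $\varphi$ is balanced, $\varphi(x_i)=z_i$. This is precisely the data of Theorem~\ref{thm:3d-vertex-construction}, so $\Delta\cong\BM_d$; conversely $\BM_d$ is a $3d$-vertex element of $\BH^d$ that is not a sphere.

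The only genuinely laborious part is the bookkeeping in the last two steps: keeping precise track of which vertices of the three cross-polytopes get identified, and ruling out—using that a handle addition forbids common neighbours, and that $d\ge 3$ leaves a spare colour available in every argument—every configuration other than the one producing $\BM_d$. The remaining ingredients (the vertex count, the $\overline{g}_2$ and $f_1$ computations, and the edge bound $9\binom{d}{2}$) are routine.
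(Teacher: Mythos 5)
Your proposal is correct and follows essentially the same strategy as the paper: use the additivity of $\overline{g}_2$ under balanced connected sums and its increment of $4\binom{d}{2}$ under handle additions, together with the $9\binom{d}{2}$ edge ceiling of $K_{3,\ldots,3}$, to force exactly three cross-polytopal summands and one handle addition, and then pin down the handle data so that the result is $\BM_d$.

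The one place where your route differs (slightly) from the paper's is in how the ``$G$ and $G'$ antipodal'' constraint is obtained. The paper establishes it up front via the clean observation that no vertex of $\Lambda = \Gamma_1\#\Gamma_2\#\Gamma_3$ can lie in all three cross-polytopes: if a vertex $v$ of color $i$ did, then for every color $j \ne i$ all vertices of color $j$ would be adjacent to $v$, so every same-color pair would share a common neighbor and no balanced handle could be attached at all. This single remark forces $G \cap G' = \emptyset$ \emph{before} the case check, and it shortens the bookkeeping. You instead derive $g_j \ne g'_j$ as a byproduct of the final color-by-color case analysis, which also works but is more laborious; your labeling of the $\Delta_2$ vertices as ``$g_i \in G$, $g'_i \in G'$'' already tacitly presupposes $G \cap G' = \emptyset$, so strictly you should first allow the degenerate case and then rule it out (which the common-neighbor argument does). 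Conversely, you make explicit a point the paper only asserts: a single $\C^*_d$ and $\C^*_d \# \C^*_d$ admit no valid balanced handle addition, so the handle must be applied after all three connected sums. That explicit step is a small but genuine improvement in rigor over the paper's terse ``it must be obtained from three cross-polytopes by two connected sums and one handle addition.''
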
 
\begin{proof} Let $\Delta$ be a $3d$-vertex element of $\BH^d$ that is not a sphere, and let $k\geq 1$ denote the number of handle additions used in the construction of $\Delta$. Then $\overline{g}_2(\Delta)=4\binom{d}{2}k\geq 4\binom{d}{2}=\overline{g}_2(\BM_d)$ and $f_0(\Delta)=f_0(\BM_d)$, and so $f_1(\Delta)\geq f_1(\BM_d)$. On the other hand, the graph of $\BM_d$ is $K_{3,3,\ldots,3}$ --- the complete $d$-partite graph with 3 vertices of each color, and hence it contains the graph of $\Delta$. We conclude that $f_1(\Delta)=f_1(\BM_d)$ and that $k=1$. Therefore, for $\Delta$ to have $3d$ vertices, it must be obtained from three (boundary complexes of) cross-polytopes by using two balanced connected sums and one balanced handle addition, i.e., $\Delta=(\Gamma_1\#_{\psi_1}\Gamma_2\#_{\psi_2}\Gamma_3)^\phi$, where $\Gamma_1$, $\Gamma_2$, and $\Gamma_3$ are all cross-polytopes. 

Consider $\Lambda=\Gamma_1\#_{\psi_1}\Gamma_2\#_{\psi_2}\Gamma_3$. Observe that no vertex of $\Lambda$ belongs to all three cross-polytopes: indeed, if $v_i$ were such a vertex of $\Lambda$ of color $i$, then for any color $j\neq i$, every two vertices of $\Lambda$ of color $j$ would have $v_i$ as their common neighbor, and so there would be no way to add a balanced handle to $\Lambda$. Thus, without loss of generality, we can assume that $V(\Gamma_1)=\{x_1,\ldots, x_d,y_1\ldots,y_d\}$, $V(\Gamma_2)=\{y'_1,\ldots,y'_d, z_1,\ldots, z_d\}$, and $V(\Gamma_3)=\{z'_1,\ldots,z'_d,x'_1,\ldots,x'_d\}$, where $\psi_1$ identifies $y_i$ with $y'_i$, and $\psi_2$ identifies $z_i$ with $z'_i$ for $i=1,\ldots,d$. But then, for each $i\in[d]$,  the only two vertices of $\Lambda$ of color $i$ that do not have a common neighbor are $x_i$ and $x'_i$. Therefore, the only balanced handle that can be added to $\Lambda$ is the one that identifies $x_i$ with $x'_i$ for all $i$, and hence $\Delta=\BM_d$.
\end{proof}

We close our discussion of the properties of $\BM_d$ by briefly talking about the face numbers. We have: $f_0(\BM_{d})=3d$ and $\overline{g}_2(\BM_{d})=4\binom{d}{2}$. Hence, it follows from Proposition \ref{at_least_3d} and Theorem \ref{t-covering} that for $d\geq 3$, in the class of all balanced $(d-1)$-dimensional homology manifolds with non-trivial $\widetilde{H}_1(-,\Q)$, the complex $\BM_{d}$ simultaneously minimizes $f_0$ and $f_1$. In fact, it minimizes the entire $f$-vector: 

\begin{theorem}  \label{BM_d-face_numbers}
Let $\Delta$ be a balanced $(d-1)$-dimensional homology manifold with $\beta_1(\Delta;\Q)\neq 0$.
\begin{enumerate}
\item If $d\geq 2$, then $f_{i-1}(\Delta)\geq f_{i-1}(\BM_{d})$ for all $0<i\leq d$.
\item Moreover, if $d\geq 5$, and $(f_0(\Delta), f_1(\Delta), f_2(\Delta))=(f_0(\BM_{d}), f_1(\BM_{d}),f_2(\BM_{d}))$, then $\Delta$ is isomorphic to $\BM_d$.
\end{enumerate}
\end{theorem}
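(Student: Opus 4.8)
The plan is to deduce Part (1) from a short list of inequalities among the ordinary $h$-numbers of $\Delta$, and to prove Part (2) by showing that $\Delta$ lies in the balanced Walkup class and invoking Proposition~\ref{isom-to-BM_d}. We may assume $d\ge 3$.

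\smallskip\noindent\emph{Part (1).} A direct computation from the construction of $\BM_d$ (three cross-polytopes joined by two balanced connected sums and one balanced handle addition, so that $h_j$ is tripled in the middle range and then altered by $(-1)^j\binom dj$) gives $h_j(\BM_d)=\bigl(3+(-1)^j\bigr)\binom dj$ for $1\le j\le d-1$; in particular $h_1(\BM_d)=2d$ and $h_2(\BM_d)=4\binom d2$. Since $f_{i-1}=\sum_{j=0}^{i}\binom{d-j}{d-i}h_j$ has nonnegative coefficients, and the same coefficients occur for $\Delta$ and for $\BM_d$, it is enough to prove $h_j(\Delta)\ge h_j(\BM_d)$ for $1\le j\le d-1$: this gives $f_{i-1}(\Delta)\ge f_{i-1}(\BM_d)$ for $1\le i\le d-1$, and the case $i=d$ then follows from the ridge-count identity $d\,f_{d-1}=2f_{d-2}$. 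Now $\Delta$, being a connected homology manifold with $\beta_1(\Delta;\Q)\ne 0$ and $\dim\Delta\ge 2$, is not a homology sphere, so Proposition~\ref{at_least_3d} yields $h_1(\Delta)=f_0(\Delta)-d\ge 2d$, and Theorem~\ref{t-covering} yields $\overline g_2(\Delta)\ge 4\binom d2$; combining these through $h_2(\Delta)=\tfrac12\bigl(\overline g_2(\Delta)+(d-1)h_1(\Delta)\bigr)$ gives $h_2(\Delta)\ge 4\binom d2$. For $d\le 4$ this already suffices (only $h_1,h_2,h_3$ occur, and $h_3(\Delta)\ge 2\binom d3$ follows from $h_1(\Delta)\ge 2d$ by Remark~\ref{h_3-vs-h_1}). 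For $d\ge 5$ I would climb the two ``step-by-two'' chains
\[
\frac{h_1(\Delta)}{\binom d1}\le\frac{h_3(\Delta)}{\binom d3}\le\frac{h_5(\Delta)}{\binom d5}\le\cdots
\qquad\text{and}\qquad
\frac{h_2(\Delta)}{\binom d2}\le\frac{h_4(\Delta)}{\binom d4}\le\frac{h_6(\Delta)}{\binom d6}\le\cdots,
\]
each rung $\binom{d}{s+2}h_s(\Delta)\le\binom ds h_{s+2}(\Delta)$ being obtained by summing $\overline g_2(\lk_\Delta F)\ge 0$ (Theorem~\ref{balancedLBT}, legitimate since each such link is a normal pseudomanifold of dimension $\ge 2$) over all faces $F$ of a fixed size $s$ and simplifying via the iterate of Swartz's identity $\sum_v h_i(\lk_\Delta v)=(i+1)h_{i+1}(\Delta)+(d-i)h_i(\Delta)$; the $s=1$ rung of the first chain is precisely Remark~\ref{h_3-vs-h_1}. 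Starting from $h_1(\Delta)\ge 2d$ and $h_2(\Delta)\ge 4\binom d2$, climbing both chains gives $h_j(\Delta)/\binom dj\ge 3+(-1)^j$, i.e.\ $h_j(\Delta)\ge h_j(\BM_d)$, for all $1\le j\le d-1$.

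\smallskip\noindent\emph{Part (2).} Assume $d\ge 5$ and $(f_0,f_1,f_2)(\Delta)=(f_0,f_1,f_2)(\BM_d)$. From $f_0(\Delta)=3d$ and Proposition~\ref{at_least_3d}, every color class of $\Delta$ has exactly three vertices; hence the graph of $\Delta$ is a subgraph of the complete $d$-partite graph $K_{3,\dots,3}$, so has at most $9\binom d2$ edges, with equality only for $K_{3,\dots,3}$ itself. Since $f_1(\Delta)=f_1(\BM_d)=9\binom d2$, the graph of $\Delta$ equals that of $\BM_d$. Converting $(f_0,f_1,f_2)(\Delta)$ to $h$-numbers gives $h_1(\Delta)=2d$, $h_2(\Delta)=4\binom d2$, $h_3(\Delta)=2\binom d3$; in particular $\overline g_2(\Delta)=4\binom d2$ and $h_3(\Delta)/\binom d3=2=h_1(\Delta)/\binom d1$, so the equality case of Remark~\ref{h_3-vs-h_1} forces $\overline g_2(\lk_\Delta v)=0$ for every vertex $v$ of $\Delta$. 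Each $\lk_\Delta v$ is a balanced homology $(d-2)$-sphere of dimension $d-2\ge 3$, so Theorem~\ref{balancedLBT-equality} shows that every vertex link of $\Delta$ is a stacked cross-polytopal sphere. Because $d\ge 5$, Corollary~\ref{balanced8.4} then gives $\Delta\in\BH^d$; and since $\Delta$ has $3d$ vertices and $\beta_1(\Delta;\Q)\ne 0$ (so $\Delta$ is not a sphere), Proposition~\ref{isom-to-BM_d} yields $\Delta\cong\BM_d$.

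\smallskip The main obstacle is the $s\ge 2$ rungs of the chains in Part (1). In contrast to the vertex case (Remark~\ref{h_3-vs-h_1}), summing $\overline g_2$ over links of faces of size $s\ge 2$ does not telescope cleanly: a residual term $h_{s+1}(\Delta)$ survives with a positive coefficient, so a lower bound on it is of no use. One must either cancel this term using the lower rungs of the chain, or else abandon this route for a genuine balanced analogue of the McMullen--Perles--Walkup reduction, expressing $f_{i-1}(\Delta)$ via the face numbers of the links of \emph{all} faces of $\Delta$ together with the componentwise balanced Lower Bound Theorem for balanced spheres. Part (2), by comparison, is a routine assembly of the results of Sections 3--5.
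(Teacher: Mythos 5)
Your Part (2) is correct and is essentially the paper's own argument: you convert the face-number hypotheses to $h_1,h_2,h_3$, invoke the equality case of Remark~\ref{h_3-vs-h_1} to force $\overline g_2(\lk_\Delta v)=0$ for every vertex, and then chain Theorem~\ref{balancedLBT-equality}, Corollary~\ref{balanced8.4}, and Proposition~\ref{isom-to-BM_d}. (The observation that the graph of $\Delta$ must be $K_{3,\dots,3}$ is true but not actually needed once you have the $h$-number equalities.)

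Part (1), however, has a genuine gap, and to your credit you flag it yourself. Your proposed route is to deduce the $f$-vector inequalities from the stronger statement $h_j(\Delta)\geq h_j(\BM_d)=(3+(-1)^j)\binom dj$ for $1\leq j\leq d-1$, and to obtain the latter by climbing ``step-by-two'' chains. But as you note, for $s\geq 2$ the sum of $\overline g_2(\lk_\Delta F)\geq 0$ over $s$-faces $F$, rewritten via iterates of Swartz's identity, does not isolate $h_{s+2}$ against $h_s$: a term $h_{s+1}$ survives with the \emph{wrong} sign (one would need an \emph{upper} bound on $h_{s+1}$, which the chain does not supply). So the rungs beyond $s=1$ are not established, and the $h$-vector inequality itself is left unproved for $4\leq j\leq d-1$. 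Moreover, it is not clear a priori that the $h$-vector inequality even holds; the theorem only asserts the (weaker) componentwise $f$-vector inequality, and your reduction to $h$-numbers, while sound as a sufficient condition, may simply be aiming at something false or at least harder.

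The paper avoids this by not passing to $h$-numbers at all. For $d\geq 4$ it sums the \emph{componentwise $f$-vector form} of the balanced Lower Bound Theorem over vertex links (each link being a balanced homology $(d-2)$-sphere), exactly as in Swartz's non-balanced argument, using $\overline g_2(\Delta)\geq 4\binom d2$ from Theorem~\ref{t-covering} in place of $g_2\geq\binom{d+1}{2}$ and Proposition~\ref{at_least_3d} in place of the $2(d+1)$-vertex bound. This is precisely the alternative route you mention in your closing paragraph (``a genuine balanced analogue of the McMullen--Perles--Walkup reduction \dots summing over links'') but do not carry out; the paper likewise defers the details to Swartz. The small cases $d=2,3$ the paper handles directly via $f_0=f_1$ and $2f_1=3f_2$, which you also cover. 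So the two treatments agree on Part (2) and on the low-$d$ base cases, but your Part (1) for $d\geq 5$ stops at an acknowledged obstruction, whereas the intended argument is the Swartz-style summation over vertex links that you named but did not develop.
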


\begin{proof} Part 1 clearly holds for $d=2$, since any 1-dimensional manifold satisfies $f_0=f_1$. It also holds for $d=3$, since in this case $2f_1=3f_2$. For $d\geq 4$, the proof of Part 1 follows by summing over vertex links and is almost identical to the proof of its non-balanced counterpart proved in \cite[Theorem 4.7]{Swartz}.  We apply Theorem \ref{balancedLBT} (its $f$-vector form as discussed in \cite[Theorem 5.3]{Goff-Klee-Novik}) instead of the classic LBT and the inequality $\overline{g}_2\geq 4\binom{d}{2}$ of Theorem \ref{t-covering} in place of the inequality $g_2\geq\binom{d+1}{2}$. We omit the details.

For Part 2, note that if $(f_0(\Delta), f_1(\Delta), f_2(\Delta))=(f_0(\BM_{d}), f_1(\BM_{d}),f_2(\BM_{d}))$, then $h_1(\Delta)=h_1(\BM_d)$ and $h_3(\Delta)=h_3(\BM_d)$, and so
\[ \frac{h_3(\Delta)}{\binom{d}{3}}-\frac{h_1(\Delta)}{\binom{d}{1}}= \frac{h_3(\BM_d)}{\binom{d}{3}}-\frac{h_1(BM_d)}{\binom{d}{1}}=0,
\]
where the last step follows from Remark \ref{h_3-vs-h_1}. Applying Remark \ref{h_3-vs-h_1} once again, we conclude that for every vertex $v$ of $\Delta$, $\overline{g}_2(\lk_\Delta(v))=0$. Since $1+\dim\lk_\Delta(v)=d-1\geq 4$, Theorem \ref{balancedLBT-equality} yields that all vertex links of $\Delta$ are stacked cross-polytopal spheres, and so by Corollary \ref{balanced8.4}, the complex $\Delta$ lies in the balanced Walkup class $\BH^d$. Proposition \ref{isom-to-BM_d} then guarantees that $\Delta$ is isomorphic to $\BM_d$.
\end{proof} 

In the remainder of this section we concentrate on the following question: for which values of $n\geq 3d$ is there a balanced $n$-vertex triangulation of the orientable (resp.~nonorientable)  $\mathbb{S}^{d-2}$-bundle over $\mathbb{S}^{1}$? While the situation is obvious for $d=2$, we offer the following theorem and conjecture for $d\geq 3$. (For similar results in the non-balanced case, see \cite{Bagchi-Datta-bundles} and \cite{Chestnut-Sapir-Swartz}.)

\begin{theorem} \label{Delta_k,d}  For all $d \geq 3$ and $k\geq 2$ there exists a balanced simplicial manifold $\Delta_{k,d}$ with $3d+k$ vertices  that triangulates $\mathbb{S}^{d-2} \times \mathbb{S}^1$, and a balanced simplicial manifold $\widetilde{\Delta}_{k,d}$ with $3d+k$ vertices that triangulates the nonorientable $\mathbb{S}^{d-2}$-bundle over $\mathbb{S}^1$.
\end{theorem}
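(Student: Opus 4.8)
The plan is to build $\Delta_{k,d}$ and $\widetilde{\Delta}_{k,d}$ by modifying the $3d$-vertex construction $\BM_d$ from Theorem \ref{thm:3d-vertex-construction}, inserting $k$ extra vertices along one of the three "necks'' of the construction. Recall that $\BM_d$ is built from three cross-polytopes $\Delta_1,\Delta_2,\Delta_3$ on disjoint vertex sets by two balanced connected sums followed by one balanced handle addition. Instead of using a single cross-polytope for, say, $\Delta_1$, I would use a \emph{stacked cross-polytopal sphere} $\ST^\times(2d+k, d-1)$ obtained by taking the balanced connected sum of $\lceil (k)/d\rceil +1$ (or however many are needed to reach $2d+k$ vertices) copies of $\C^*_d$; more precisely, one wants a balanced $(d-1)$-sphere on exactly $2d+k$ vertices containing two disjoint facets $F,F'$ (colored fully, one vertex of each color) such that $F$ and $F'$ have no common neighbor, and such that the facet-ridge distance between $F$ and $F'$ has a prescribed parity. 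One then performs the two balanced connected sums with $\Delta_2$ and $\Delta_3$ (as before) and the balanced handle addition identifying $F$ with $F'$. Since the ambient complex before the handle addition is a balanced sphere and the handle is added along facets with no common neighbor, the result is a balanced simplicial manifold that is an $\mathbb{S}^{d-2}$-bundle over $\mathbb{S}^1$ on $3d+k$ vertices, and its orientability is governed by Part 2 of Lemma \ref{orientability_lemma}.

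The key steps, in order, would be: (1) construct, for each $k\ge 2$, a balanced $(d-1)$-sphere $\Sigma_k$ on $2d+k$ vertices that is a balanced connected sum of cross-polytopes and contains two disjoint "full-color'' facets $F,F'$ with no common neighbor --- for instance, by stacking cross-polytopes in a path-like fashion so that $F$ lies in the first cross-polytope and $F'$ in the last; adjusting how many extra vertices each connected-sum step contributes lets us hit exactly $2d+k$ for every $k\ge 2$ (using that each balanced connected sum adds $d$ new vertices, together with possibly a single "half'' step — but since we need arbitrary $k$, not just multiples of $d$, I would instead attach the extra vertices more flexibly, e.g.\ via one cross-polytope glued along a lower-dimensional face, or by a more careful stacking that adds vertices one at a time). (2) Form $\widetilde{\Delta} = \Sigma_k \# \Delta_2 \# \Delta_3$ using balanced connected sums that preserve the facets $F,F'$ and do not introduce common neighbors for the pairs of $F$ and $F'$. (3) Perform the balanced handle addition identifying $F$ with $F'$, obtaining a balanced $(3d+k)$-vertex manifold which, being a handle addition to a sphere, is the appropriate sphere bundle over $\mathbb{S}^1$. (4) Compute the facet-ridge distance from $F$ to $F'$ in $\widetilde{\Delta}$ and invoke Lemma \ref{orientability_lemma}(2): by choosing the number of cross-polytopes in the "neck'' to have the right parity we can force this distance to be odd (giving the orientable bundle $\Delta_{k,d}$) or even (giving the nonorientable bundle $\widetilde{\Delta}_{k,d}$). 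Since adjusting the neck length changes $k$, one must verify that both parities are achievable for \emph{every} $k\ge 2$; this may require two slightly different base constructions or an auxiliary move (such as a bistellar-type flip, or rerouting through the $\Delta_2$/$\Delta_3$ blocks) that changes the parity of the distance while keeping the vertex count fixed.

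The main obstacle I expect is step (1) together with the parity bookkeeping in step (4): realizing \emph{every} value $n = 3d+k$ with $k\ge 2$, rather than only $n \equiv 3d \pmod d$, requires a way to add vertices to a balanced cross-polytopal sphere in increments of one while keeping it balanced, keeping it a sphere, and keeping two disjoint full-color non-adjacent facets available. The balanced connected sum operation natively adds $d$ vertices at a time, so some finer local operation is needed — presumably a "stellar-type'' subdivision of an edge or face that respects the coloring, or a direct description of a family of balanced cross-polytopal spheres of all sizes. Once the vertex count is flexible, ensuring that \emph{both} the orientable and nonorientable bundles are hit for each $k$ is a matter of arranging the facet-ridge distance to take both parities; I would handle this by showing that inserting a single extra "detour'' in the neck (which can be absorbed by a compensating adjustment elsewhere) toggles the parity without changing $n$, so that for each $k\ge 2$ at least one of the two constructions yields each bundle. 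Verifying that all the gluings genuinely produce simplicial complexes (the "no common neighbor'' condition for the handle addition, and the facet-identification condition for the connected sums) is routine but must be checked carefully given the proliferation of identifications.
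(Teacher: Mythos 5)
Your overall plan --- replace one of the three cross-polytopes in the $\BM_d$ construction by a larger balanced $(d-1)$-sphere, then add a handle and use Lemma \ref{orientability_lemma}(2) --- is exactly the paper's strategy. But you explicitly flag the crux (step (1): producing a balanced $(d-1)$-sphere on $2d+k$ vertices for \emph{arbitrary} $k\ge 2$, with the needed pair of disjoint full-color facets) as the main obstacle, and you do not actually resolve it. Stacked cross-polytopal spheres only give vertex counts that are multiples of $d$, and the fallbacks you suggest do not close the gap: stellar subdivision of an edge (or a bistellar-type flip) in a $(d-1)$-dimensional complex generally destroys the $d$-coloring, since the new vertex would need a color not already present on the subdivided face's link, and in a balanced $(d-1)$-manifold there is no spare color. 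So neither of your proposed local moves is available, and ``a direct description of a family of balanced cross-polytopal spheres of all sizes'' is precisely the thing you would need to supply.

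The paper's resolution is a concrete construction you did not find: take $\Delta_1$ to be the $(d-2)$-fold suspension of an even cycle $C$ of length $4+k$ (for $k$ even). This is a balanced $(d-1)$-sphere on $2(d-2)+(4+k)=2d+k$ vertices, it is not a stacked cross-polytopal sphere in general, and it contains the two disjoint full-color facets $X=\{x_1,\ldots,x_d\}$ and $\widetilde X = \{x_1,\ldots,x_{d-1},u_k\}$ needed for the handle. The orientability question is also handled more cleanly than your ``detour'' idea: from the \emph{same} underlying sphere $\widetilde\Delta=\Delta_1\#\Delta_2\#\Delta_3$, one performs a handle addition identifying $X'$ with $X$, and separately a handle addition identifying $X'$ with $\widetilde X$; since $X$ and $\widetilde X$ are adjacent in the facet-ridge graph, these two choices give facet-ridge distances of opposite parity, so Lemma \ref{orientability_lemma}(2) yields one orientable and one nonorientable bundle on the same vertex count. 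Odd $k$ requires a slightly different assembly (a $(d-3)$-fold suspension of $\C^*_3\#\C^*_3$ together with a $(d-2)$-fold suspension of a cycle of length $k+1$), again hinging on suspended cycles as the source of ``odd-sized'' balanced spheres. Without the suspension-of-a-cycle idea, your construction only covers $k\equiv 0\pmod d$, so the proof as proposed is incomplete.
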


\begin{proof}
The proof is very similar to the proof of Theorem \ref{thm:3d-vertex-construction}. We first consider the case of an even $k$. Let $\Sigma_1$ denote the boundary complex of a $(d-2)$-dimensional cross-polytope whose vertices are labeled as $\{x_1,x_2,\ldots,x_{d-2}\} \cup \{y_1,y_2,\ldots,y_{d-2}\}$ where $\kappa(x_i) = \kappa(y_i) = i$, and let $C$ denote the cycle graph on $4+k$ vertices, which are labeled sequentially as $\{u_1,\ldots, u_k,x_{d-1},x_d,y_{d-1},y_d\}$.  Then $C$ is balanced if we declare that $\kappa(u_{2i-1})=\kappa(x_{d-1}) = \kappa(y_{d-1}) = d-1$ and $\kappa(u_{2i}) = \kappa(x_d) = \kappa(y_d) = d$.  Let $\Delta_1 = \Sigma_1*C$.  Alternatively, we may view $\Delta_1$ as the $(d-2)$-fold suspension of $C$.  

As in the proof of Theorem \ref{thm:3d-vertex-construction}, let $\Delta_2$ and $\Delta_3$ be the boundary complexes of $d$-dimensional cross-polytopes with $V(\Delta_2)=\{y'_1,\ldots,y'_d\} \cup \{z_1,\ldots,z_d\}$ and $V(\Delta_3)=\{z'_1,\ldots,z'_d\} \cup \{x'_1,\ldots,x'_d\}$.  Let $\widetilde{\Delta} = \Delta_1 \# \Delta_2 \# \Delta_3$, where the first connected sum identifies vertex $y_i$ with vertex $y'_i$ for all $i$, and the second connected sum identifies vertex $z_i$ with vertex $z'_i$ for all $i$.  

 Consider the following three facets of $\widetilde{\Delta}$: $X' = \{x'_1,\ldots, x'_{d-1},x'_d\}$, $X = \{x_1,\ldots,x_{d-1},x_d\}$, and $\widetilde{X} = \{x_1,\ldots,x_{d-1},u_k\}$.  Once again, the vertices in $X'$ do not have any neighbors in common with any of the vertices in either $X$ or $\widetilde{X}$.  Therefore, we can perform two separate balanced handle additions.  Let $\Gamma$ be the simplicial complex obtained from $\widetilde{\Delta}$ through a balanced handle addition that identifies the vertices of $X'$ to the vertices of $X$, and let $\Gamma'$ be obtained from $\widetilde{\Delta}$ through a balanced handle addition that identifies $X'$ to $\widetilde{X}$.  Both $\Gamma$ and $\Gamma'$ are $\mathbb{S}^{d-2}$-bundles over $\mathbb{S}^1$ with $3d+k$ vertices.  Furthermore, by Part 2 of Lemma \ref{orientability_lemma}, one of them is orientable and the other one is not. This establishes the existence of both $\Delta_{k,d}$ and $\widetilde{\Delta}_{k,d}$ for every even $k\geq 2$. 

Finally, if $k\geq 3$ is odd, then let $\Delta_1$ be the $(d-3)$-fold suspension of $\C^*_3\# \C^*_3$, let $\Delta_2$ be the $(d-2)$-fold suspension over the cycle of length $k+1$, and let $\Delta_3$ be $\C^*_d$. Then $\Delta_1$ has $2\cdot(d-3)+9$ vertices, $\Delta_2$ has $2\cdot(d-2)+(k+1)$ vertices, and $\Delta_3$ has $2d$ vertices, so the resulting balanced connected sum $\Delta_1\#\Delta_2\#\Delta_3$ has $4d+k$ vertices.  The rest of the construction follows similarly to the case that $k$ is even by performing a balanced handle operation to $\Delta_1\#\Delta_2\#\Delta_3$, and we omit the details for the sake of brevity.
\end{proof}

\begin{conjecture} \label{no_triangulation} For all $d\geq 3$, the following hold.
\begin{enumerate}
\item The orientable $\mathbb{S}^{d-2}$-bundle over $\mathbb{S}^{1}$ has no balanced $3d$-vertex triangulation if $d$ is even; the nonorientable $\mathbb{S}^{d-2}$-bundle over $\mathbb{S}^{1}$ has no balanced $3d$-vertex triangulation if $d$ is odd.
\item Neither the orientable nor the nonorientable $\mathbb{S}^{d-2}$-bundle over $\mathbb{S}^{1}$ has a balanced $(3d+1)$-vertex triangulation.
\end{enumerate}
\end{conjecture}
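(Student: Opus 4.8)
\medskip\noindent\textit{Sketch of a possible approach to Conjecture~\ref{no_triangulation}:}\ Suppose $\Delta$ is a balanced triangulation of one of the two $\mathbb{S}^{d-2}$-bundles over $\mathbb{S}^1$ with $n$ vertices, where $n=3d$ in Part~1 and $n=3d+1$ in Part~2, and assume the orientability type is the ``forbidden'' one; we aim for a contradiction. The first rational Betti number of each of these bundles is positive, so Theorem~\ref{t-covering} gives $\overline{g}_2(\Delta)\geq 4\binom{d}{2}$. By Proposition~\ref{at_least_3d} every color class of $\Delta$ has at least three vertices; hence when $n=3d$ all $d$ color classes have exactly three vertices, and when $n=3d+1$ exactly one color class has four vertices and the other $d-1$ have three. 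In either case the graph of $\Delta$ is a subgraph of the complete $d$-partite graph on these parts, which bounds $f_1(\Delta)$ from above. Combining this bound with $h_1(\Delta)=n-d$ and the identity $\overline{g}_2(\Delta)=2f_1(\Delta)-3(d-1)h_1(\Delta)-2\binom{d}{2}$ shows that the inequality $\overline{g}_2(\Delta)\geq 4\binom{d}{2}$ forces $f_1(\Delta)$ to nearly attain its maximum: for $n=3d$ the graph of $\Delta$ must be exactly $K_{3,3,\dots,3}$ (and then $\overline{g}_2(\Delta)=4\binom{d}{2}$), while for $n=3d+1$ the graph must be $K_{3,\dots,3,4}$ with fewer than $\tfrac{3}{2}(d-1)$ edges deleted.

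For Part~1 the plan is then to show that such a $\Delta$ must be isomorphic to $\BM_d$. Given the results already available this reduces, for $d\geq 5$, to showing that every vertex link of $\Delta$ is a stacked cross-polytopal sphere: then Theorem~\ref{balancedLBT-equality} and Corollary~\ref{balanced8.4} put $\Delta$ in the balanced Walkup class $\BH^d$, and Proposition~\ref{isom-to-BM_d} identifies $\Delta$ with $\BM_d$. Since $\BM_d$ triangulates $\mathbb{S}^{d-2}\times\mathbb{S}^1$ precisely when $d$ is odd (Theorem~\ref{thm:3d-vertex-construction}), this is impossible when $\Delta$ is the orientable bundle and $d$ is even, or the nonorientable bundle and $d$ is odd, which is exactly the content of Part~1. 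By Remark~\ref{h_3-vs-h_1}, the claim ``every vertex link is stacked cross-polytopal'' is equivalent to the single equality $h_3(\Delta)/\binom{d}{3}=h_1(\Delta)/\binom{d}{1}$, which, using $f_0(\Delta)=3d$ and $f_1(\Delta)=9\binom{d}{2}$, amounts to $f_2(\Delta)=21\binom{d}{3}$ (the value $f_2(\BM_d)$). The small cases $d=3,4$, where Corollary~\ref{balanced8.4} is not available, would be treated by hand in the style of the base case of the proof of Theorem~\ref{balancedLBT-equality}.

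For Part~2 the plan is to push the same analysis one step further. Here $\Delta$ is still almost balanced $2$-neighborly, and the unique size-four color class $V_j$ singles out four vertices whose links are balanced homology $(d-2)$-spheres on at most $3(d-1)$ vertices; the near-minimality of $\overline{g}_2(\Delta)$ should force these four links, as well as the links at the remaining $3(d-1)$ vertices, to be very close to cross-polytopal. One would then try to contradict this by a local count around $V_j$, in the spirit of the proof of Proposition~\ref{isom-to-BM_d}, where the location of an addable balanced handle is severely constrained.

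The step I expect to be the main obstacle --- and presumably the reason this statement is only a conjecture --- is establishing the equality $f_2(\Delta)=21\binom{d}{3}$ in Part~1 (and its analogue in Part~2). The tools at hand, namely the rigidity bound of Corollary~\ref{rigidity-corollary} applied to the rank-selected subcomplexes $\Delta_S$ together with the averaged link identity underlying Remark~\ref{h_3-vs-h_1}, only deliver the inequality $h_3(\Delta)/\binom{d}{3}\geq h_1(\Delta)/\binom{d}{1}$; upgrading it to equality appears to need either the (still open) equality case of Conjecture~\ref{b1_conj_rev} or a genuinely new combinatorial restriction on which triangles meeting three color classes can occur in a balanced manifold whose $\overline{g}_2$ is near its minimum. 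The generalized Dehn--Sommerville relations pin down $h_{d-1}$ and $h_{d-2}$ from the Euler characteristic but leave $h_3$ undetermined once $d\geq 7$.
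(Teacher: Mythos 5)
This statement is a \emph{conjecture}, and the paper does not prove it in full: it only establishes Part~1 for $d\leq 5$ (its Proposition following the conjecture) and Part~2 for $d\in\{3,4\}$ (the last Proposition of Section~6), and records in a closing remark that the general case was settled later by Zheng. Your opening analysis is exactly the paper's: Theorem~\ref{t-covering} forces $\overline g_2(\Delta)\geq 4\binom d2$, Proposition~\ref{at_least_3d} forces each color class to have $\geq 3$ vertices, and the edge count then pins the graph to $K_{3,\ldots,3}$ when $n=3d$ (resp.\ a near-complete $d$-partite graph when $n=3d+1$). Your reduction for Part~1 with $d\geq 5$ --- get $h_3/\binom d3 = h_1/\binom d1$, invoke Remark~\ref{h_3-vs-h_1} and Theorem~\ref{balancedLBT-equality} to make all links stacked cross-polytopal, then use Corollary~\ref{balanced8.4} and Proposition~\ref{isom-to-BM_d} --- is precisely the chain the paper runs through Theorem~\ref{BM_d-face_numbers}(2). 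The difference is that the paper stops at $d=5$: for a $4$-manifold the Dehn--Sommerville relations deliver $h_3$ from $h_2$ for free, which is exactly the missing ingredient, so the paper does not have to ``establish'' $f_2(\Delta)=f_2(\BM_d)$ separately there.

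You also correctly identify the genuine gap, namely that for larger $d$ nothing in the paper upgrades the inequality $h_3/\binom d3\geq h_1/\binom d1$ to an equality. One small inaccuracy: the obstruction already appears at $d=6$ (where $h_3$ sits in the middle and Dehn--Sommerville gives only the tautology $h_3=h_3$), not merely at $d\geq 7$ as you write. For Part~2 your plan is more speculative than the paper's actual arguments: for $d=3$ the paper shows every vertex link must be a hexagon and then contradicts the face count, and for $d=4$ it uses the generalized Dehn--Sommerville relations $h_S=h_{[d]-S}$ on $2$-element color sets $S\subseteq[3]$ to cap $h_2$ below the value forced by Theorem~\ref{t-covering}. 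Those are concrete, low-dimensional computations rather than the ``local count around the size-four color class'' you sketch, and they do not obviously scale. So: your proposal is a correct and honest outline of the paper's strategy and of where it stalls, but it does not give a proof of the conjecture, consistent with the statement's status as a conjecture in the paper.
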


\noindent Note that according to Proposition \ref{isom-to-BM_d}, if there is a balanced $3d$-vertex triangulation of $\mathbb{S}^{d-2}\times \mathbb{S}^1$ for even $d\geq 3$ (resp.~of the non-orientable bundle for odd $d\geq 3$), then such a triangulation does not belong to the balanced Walkup class.

We close this section with collecting some evidence in favor of Conjecture \ref{no_triangulation}. For small values of $d$, Part 1 of the conjecture follows from the following strengthening of Proposition \ref{isom-to-BM_d}. Recall that $\widetilde{\chi}(\Delta):=\sum_{i=0}^{d-1}(-1)^i \beta_i(\Delta;\field)$ denotes the \textit{reduced Euler characteristic} of $\Delta$. Recall also, that for a homology $(d-1)$-manifold $\Delta$, the following Dehn-Sommerville relations hold \cite{Klee-DS}:
\[
h_{d-i}(\Delta) =h_i(\Delta)+(-1)^i \binom{d}{i}\left(\widetilde{\chi}(\Delta)+(-1)^d\right).
\]

\begin{proposition} Let $d\in\{3,4,5\}$. If $\Delta$ is a balanced $3d$-vertex triangulation of a homology $(d-1)$-manifold with $\beta_1(\Delta; \Q)\neq 0$ and $\widetilde{\chi}(\Delta)=\widetilde{\chi}(\BM_d)$, then $\Delta$ is isomorphic to $\BM_d$. Thus, the first part of Conjecture \ref{no_triangulation} holds for all $d\leq 5$.
\end{proposition}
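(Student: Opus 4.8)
The plan is to first pin down the entire $f$-vector of $\Delta$, then show that $\Delta\in\BH^d$ so that Proposition~\ref{isom-to-BM_d} applies; the case $d=5$ goes through uniformly via the link machinery, while $d\in\{3,4\}$ need ad hoc arguments, and the statement about Conjecture~\ref{no_triangulation} then follows formally.

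\emph{Fixing the $f$-vector.} Since $\beta_1(\Delta;\Q)\neq 0$, $\Delta$ is not a $\field$-homology sphere, so by Proposition~\ref{at_least_3d} the hypothesis $f_0(\Delta)=3d$ forces $\Delta$ to have exactly three vertices of each color; in particular the graph of $\Delta$ is a subgraph of the complete $d$-partite graph $K_{3,3,\dots,3}$, so $f_1(\Delta)\leq 9\binom{d}{2}=f_1(\BM_d)$. On the other hand $h_1(\Delta)=f_0(\Delta)-d=2d$, and a short computation gives $\overline{g}_2(\Delta)=2f_1(\Delta)-7d(d-1)$, so the inequality $\overline{g}_2(\Delta)\geq 4\binom{d}{2}$ of Theorem~\ref{t-covering} (valid since $\beta_1(\Delta;\Q)\neq 0$) yields $f_1(\Delta)\geq 9\binom{d}{2}$. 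Hence $f_1(\Delta)=9\binom{d}{2}$, the graph of $\Delta$ is exactly $K_{3,3,\dots,3}$, and $\overline{g}_2(\Delta)=4\binom{d}{2}$. Because the Euler characteristic of every $\mathbb{S}^{d-2}$-bundle over $\mathbb{S}^1$ equals $\chi(\mathbb{S}^{d-2})\cdot\chi(\mathbb{S}^1)=0$, we have $\widetilde{\chi}(\Delta)=\widetilde{\chi}(\BM_d)=-1$; feeding this and the now-determined values $h_1,h_2$ into the Dehn--Sommerville relations for homology $(d-1)$-manifolds determines the whole $h$-vector, hence the whole $f$-vector, of $\Delta$, and one checks that it coincides with that of $\BM_d$.

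\emph{The case $d=5$.} Here $(f_0,f_1,f_2)(\Delta)=(f_0,f_1,f_2)(\BM_5)$ and $d\geq 5$, so $\Delta\cong\BM_5$ by Theorem~\ref{BM_d-face_numbers}. (Equivalently: the Dehn--Sommerville computation gives $h_3(\Delta)/\binom{d}{3}=h_1(\Delta)/\binom{d}{1}$, so by Remark~\ref{h_3-vs-h_1} every vertex link of $\Delta$ has $\overline{g}_2=0$; these links are $3$-dimensional normal pseudomanifolds, so Theorem~\ref{balancedLBT-equality} shows they are stacked cross-polytopal spheres, Corollary~\ref{balanced8.4} places $\Delta$ in $\BH^5$, and Proposition~\ref{isom-to-BM_d} finishes.) \emph{The case $d=4$.} The link criteria above are unavailable ($\lk_\Delta(v)$ is only a $2$-sphere). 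Note first that $\Delta$ is not flag: otherwise every triangle of $K_{3,3,3,3}$ would be a $2$-face, giving $f_2(\Delta)=\binom{4}{3}3^3=108\neq f_2(\BM_4)$. The crucial step — and the main obstacle of the whole proof — is to show that $\Delta$ has a missing $3$-face $F$, i.e.\ a rainbow $4$-set that is a missing facet; I would establish this by a finer analysis of the $f_3(\Delta)=42$ facets among the $3^4$ rainbow $4$-sets, the $f_2(\Delta)=84$ ridges among the $\binom43 3^3$ rainbow triples, the incidences between the $24$ missing $2$-faces and the $4$-sets containing them, and the pseudomanifold condition. Granting $F$: since $F$ is rainbow, $\Delta\cup\{F\}$ is balanced, so cutting $\Delta$ along $\partial F$ and capping with two simplices (as in Lemma~\ref{missing(d-1)face}) produces a balanced simplicial $3$-manifold $\Gamma$. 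If $\Gamma=\Delta_1\sqcup\Delta_2$, then, $\overline{g}_2$ being additive over balanced connected sums, one summand (say $\Delta_1$, the one with $\beta_1(\Delta_1;\Q)\neq 0$) satisfies $\overline{g}_2(\Delta_1)\geq 4\binom42$ by Theorem~\ref{t-covering}, forcing $\overline{g}_2(\Delta_2)=0$ (Theorem~\ref{balancedLBT}); then $\Delta_2$ is a stacked cross-polytopal $3$-sphere (Theorem~\ref{balancedLBT-equality}) on at least $2d$ vertices, leaving $\Delta_1$ with at most $2d<3d$ vertices, contradicting Proposition~\ref{at_least_3d}. Hence $\Gamma$ is connected and $\Delta=\Gamma^{\varphi}$ is a balanced handle addition, so $\overline{g}_2(\Gamma)=\overline{g}_2(\Delta)-4\binom42=0$; thus $\Gamma$ is a stacked cross-polytopal $3$-sphere (Theorem~\ref{balancedLBT-equality}), so $\Gamma\in\BH^4$ and therefore $\Delta=\Gamma^{\varphi}\in\BH^4$. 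Proposition~\ref{isom-to-BM_d} now gives $\Delta\cong\BM_4$.

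\emph{The case $d=3$, and the conclusion.} For $d=3$, $\Delta$ is a $9$-vertex triangulated closed surface with graph $K_{3,3,3}$ and, since $\widetilde{\chi}(\Delta)=-1$, with $\chi(\Delta)=0$, so $\|\Delta\|$ is the torus or the Klein bottle; $\Delta\cong\BM_3$ then follows from the classical fact that $K_{3,3,3}$ admits a unique triangular embedding into a closed surface, namely the one onto the torus, which is exactly $\BM_3$ (alternatively, cut $\Delta$ along one of its missing facets as in Lemma~\ref{missing(d-1)face} and invoke the classification of small balanced $2$-spheres). Finally, Part~1 of Conjecture~\ref{no_triangulation} follows for $d\leq 5$: a balanced $3d$-vertex triangulation $\Delta$ of the orientable (for even $d$) or nonorientable (for odd $d$) $\mathbb{S}^{d-2}$-bundle over $\mathbb{S}^1$ is a homology $(d-1)$-manifold with $\beta_1(\Delta;\Q)\neq 0$ and $\widetilde{\chi}(\Delta)=-1=\widetilde{\chi}(\BM_d)$, hence $\Delta\cong\BM_d$ by what was just proved; but $\BM_d$ realizes the bundle of the opposite orientability type (Theorem~\ref{thm:3d-vertex-construction}), and orientability of $\|\Delta\|$ is a topological invariant — a contradiction. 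The main difficulty in this program is the $d=4$ existence of a missing $3$-face (and, secondarily, the $d=3$ surface classification); everything else is driven by Theorems~\ref{balancedLBT}, \ref{balancedLBT-equality}, \ref{t-covering} and Proposition~\ref{isom-to-BM_d}.
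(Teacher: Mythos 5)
Your overall scaffolding (fix the $f$-vector, then prove $\Delta\in\BH^d$ and invoke Proposition~\ref{isom-to-BM_d}) is the same as the paper's, and the $d=5$ case is exactly the paper's argument. But your proof of the statement is incomplete in the two cases you flag as hard. For $d=4$, the existence of a missing $3$-face is not established: you gesture at ``a finer analysis'' of the $42$ facets, $84$ ridges, and $24$ missing $2$-faces, but a straightforward incidence count does not close the gap. Indeed, the $24$ missing rainbow triples generate at most $24\cdot 3=72$ incidences with the $39$ non-facet rainbow $4$-sets, which averages to fewer than $2$ per $4$-set, so nothing forces some non-facet $4$-set to have all four of its triples present in $\Delta$. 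The paper instead uses that each vertex link is a balanced $2$-sphere on $3+3+3$ vertices and must therefore be $\C^*_3\#\C^*_3$ (this sub-claim is itself proved); such a link has a \emph{unique} missing $2$-face $F$, and then one splits into cases according to whether $F\in\Delta$: if not, a local retriangulation (splitting $v$ into two vertices) drops $\overline{g}_2$ by $3$, contradicting Theorem~\ref{t-covering}; if so, $F\cup\{v\}$ is the desired missing $3$-face. This vertex-link route is the key idea you are missing.

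For $d=3$ you appeal to ``the classical fact that $K_{3,3,3}$ admits a unique triangular embedding into a closed surface, namely onto the torus.'' Even if true, this is a nontrivial external input that needs a citation or a proof; the paper gives a self-contained argument via the complements of vertex links in $\Delta_{\{2,3\}}=K_{3,3}$, which form three pairwise disjoint perfect matchings $M_x,M_y,M_z$; choosing one edge from each to form a transversal matching yields three pairwise disjoint missing $2$-faces, and cutting along all three (at most one of which undoes a handle, since $\overline{g}_2(\Delta)=4\binom{3}{2}$) leaves three disjoint copies of $\C^*_3$, placing $\Delta$ in $\BH^3$. On the positive side, your way of ruling out the disconnected case after cutting in $d=4$ --- one summand must carry $\beta_1\neq 0$ hence $\overline{g}_2\geq 4\binom{4}{2}$, forcing the other to be a stacked cross-polytopal sphere on $\geq 2d$ vertices and leaving too few for the first --- is a clean alternative to the paper's brief appeal to the $K_{3,3,3,3}$ graph, and it works. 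The final deduction of Part~1 of Conjecture~\ref{no_triangulation} from the classification is correct.
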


\begin{proof} We are given that $f_0(\Delta)=3d=f_0(\BM_d)$, and we also know from Part 1 of Theorem~\ref{BM_d-face_numbers} that  $f_1(\Delta)\geq f_1(\BM_d)$. However, by Remark \ref{BM_d-prop}, the graph of $\BM_d$ is the complete $d$-partite graph with 3 vertices of each color. It thus follows that $\Delta$ and $\BM_d$ have the same graph, and hence that $(h_0(\Delta),h_1(\Delta), h_2(\Delta))=(h_0(\BM_d),h_1(\BM_d), h_2(\BM_d))$. Since $d\leq 5$ and since, by the Dehn-Sommerville relations, the Euler characteristic and the first half of the $h$-vector of any homology manifold  determine the entire $h$-vector, we infer that $f(\Delta)=f(\BM_d)$. The case $d=5$ of the statement then follows from Part 2 of Theorem \ref{BM_d-face_numbers}.

We now consider the case of $d=3$. (Recall that homology manifolds of dimension $\leq 3$ are simplicial manifolds.) Label the vertices of $\Delta$ so that $V_1(\Delta)=\{x_1,y_1,z_1\}$. Since the graph of $\Delta$ is $K_{3,3,3}$ and since $\Delta$ is a manifold, the link of each vertex in $\Delta$ is a graph cycle on 6 vertices. In other words, the complement of $\lk_\Delta(x_1)$ in $\Delta_{\{2,3\}}= K_{3,3}$ consists of 3 edges that form a perfect matching $M_x$ of $K_{3,3}$, and the same holds for the links of $y_1$ and $z_1$. Moreover, since every edge of $\Delta$ is in exactly two facets, the three perfect matchings $M_x$, $M_y$, and $M_z$ are pairwise disjoint and their union is $K_{3,3}$. Hence there exist $e_x\in M_x$, $e_y\in M_y$, and $e_z\in M_z$ such that $\{e_x, e_y, e_z\}$ is also a matching of $K_{3,3}$. The sets $F_x:=e_x\cup\{x_1\}$, $F_y:=e_y\cup\{y_1\}$, and $F_z:=e_z\cup\{z_1\}$ then form a collection of three pairwise disjoint missing 2-faces of $\Delta$. As in the proof of Lemma \ref{missing(d-1)face}, cut along $F_x$ and patch with two 2-simplices. Then do the same with $F_y$ and $F_z$. Since $\overline{g}_2(\Delta)=\overline{g}_2(BM_3)=4\binom{3}{2}$, at most one of these three operations is ``undoing a handle", and we conclude (by counting vertices) that the complex we end up with is a disjoint union of three copies of $\C^*_3$. Therefore, $\Delta\in \BH^3$, and so $\Delta$ is isomorphic to $\BM_3$ by Proposition \ref{isom-to-BM_d}.

To treat the case of $d=4$, we first show that (up to isomorphism) the only balanced 2-sphere on $3+3+3$ vertices is $\C^*_3\#\C^*_3$. Indeed, if $\Gamma$ is such a sphere with $f_0(\Gamma)=9$, then  $f_1(\Gamma)=3\cdot 9-6=21$ and $f_2=14$. Hence w.l.o.g.~$f_{\{2,3\}}(\Gamma)\geq \frac{21}{3}=7$. On the other hand, since $14=f_2(\Gamma)= f_1(\lk_\Gamma(x_1))+ f_1(\lk_\Gamma(y_1))+f_1(\lk_\Gamma(z_1))$, and since all vertex links are even cycles, there must be a vertex of $\Gamma$ of color 1, say $x_1$,  whose link is a hexagon. Therefore, there is an edge $e\in\Gamma_{\{2,3\}}$ that is not in the link of $x_1$ although both of its endpoints are. Thus $F:=e\cup\{x_1\}$ is a missing 2-face of $\Gamma$. The claim follows by cutting $\Gamma$ along $F$ and patching with two 2-simplices.

Now let $\Delta$ be a 3-dimensional complex as in the statement of the proposition, and let $v\in V(\Delta)$. Since the graph of $\Delta$ is $K_{3,3,3,3}$, the link of $v$ is a balanced 2-sphere on $3+3+3$ vertices, and so, $\lk_\Delta(v)=C_1\#C_2$, where $C_i$ (for $i=1,2$) is isomorphic to $\C^*_3$. In particular, $\lk_\Delta(v)$ has a unique missing 2-face; we denote it by $F$. 

{\bf Case 1:} $F$ is not a face of $\Delta$. In this case we use the same trick as in \cite[Proposition 2.3]{Swartz-finiteness}: introduce two new vertices $v_1$ and $v_2$ of the same color as $v$, remove the ball $B:=v*\lk_\Delta(v)$ from $\Delta$ and replace it with the ball $B':=(v_1*(C_1\cup \{F\}) \cup (v_2*(C_2\cup \{F\})$. The resulting complex $\Delta'$ is balanced. Further, since $B$ and $B'$ have the same boundary complex (indeed, $\partial(B)=\partial(B')=\lk_\Delta(v)$), $\Delta'$ is also a simplicial manifold with $\|\Delta\|\cong\|\Delta'\|$. On the other hand, $f_0(\Delta')=f_0(\Delta)+1$ and $f_1(\Delta')=f_1(\Delta)+3$, yielding that $\overline{g}_2(\Delta')=\overline{g}_2(\Delta)-3=4\binom{4}{3}-3$. This however contradicts the assertion of Theorem \ref{t-covering}, and so this case is impossible.

{\bf Case 2:}  $F$ is a face of $\Delta$. In this case, $F':=F\cup\{v\}$ is a missing 3-face of $\Delta$. Cutting $\Delta$ along $F'$ and patching with two 3-simplices results in a simplicial manifold, $\Lambda$. Note that $\Lambda$ is connected (this is because the graph of $\Delta$ is $K_{3,3,3,3}$). Thus $\Delta$ is obtained from $\Gamma$ through a balanced handle addition, and $\overline{g}_2(\Lambda)=\overline{g}_2(\Delta)-4\binom{d}{2}=0$. Hence, by Theorem \ref{balancedLBT-equality}, $\Lambda$ is a stacked cross-polytopal sphere. Therefore, $\Delta$ is in $\BH^4$, and so $\Delta$ is isomorphic to $\BM_4$ by Proposition \ref{isom-to-BM_d}. 
\end{proof}

As for the second part of Conjecture \ref{no_triangulation}, we have:

\begin{proposition} The second part of Conjecture \ref{no_triangulation} holds for $d\in\{3,4\}$.
\end{proposition}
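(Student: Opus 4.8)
The plan is to reach a contradiction by counting the facets of $\Delta$ through the vertices of its (unique) colour class of size $4$, using the relation $\chi(\Delta)=0$ and, in the case $d=4$, the inequality $\overline{g}_2(\Delta)\geq 4\binom{d}{2}$ of Theorem~\ref{t-covering}.

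Suppose $\Delta$ is a balanced $(3d+1)$-vertex triangulation of one of the two $\mathbb{S}^{d-2}$-bundles over $\mathbb{S}^1$. These bundles are closed PL $(d-1)$-manifolds that are not spheres, so by Proposition~\ref{at_least_3d} each of the $d$ colour classes has at least $3$ vertices; since the classes have $3d+1$ vertices in total, exactly one class $D$ has $4$ vertices and each of the other $d-1$ classes has $3$. The set $D$ is independent in $\Delta$, and since $\Delta$ is pure and balanced every facet meets each colour class in exactly one vertex, so every facet of $\Delta$ contains exactly one vertex of $D$. Writing $e$ for the number of edges of $\Delta$ with both endpoints outside $D$, independence of $D$ gives $\sum_{v\in D}\deg_\Delta(v)=f_1(\Delta)-e$, and $e\leq 9\binom{d-1}{2}$ (the edge count of the complete $(d-1)$-partite graph with all parts of size $3$), i.e.\ $e\leq 9$ when $d=3$ and $e\leq 27$ when $d=4$. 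Also $\chi(\Delta)=0$, since any fiber bundle over $\mathbb{S}^1$ has vanishing Euler characteristic.

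The key step is the identity $f_{d-1}(\Delta)=\sum_{v\in D}f_{d-2}(\lk_\Delta(v))$, valid because every facet lies in the closed star of exactly one vertex of $D$, combined with the fact that each $\lk_\Delta(v)$ is a simplicial $(d-2)$-sphere. For $d=3$ each link is a cycle, so $f_{d-2}(\lk_\Delta(v))=f_1(\lk_\Delta(v))=\deg_\Delta(v)$, giving $f_2(\Delta)=f_1(\Delta)-e$; but $\chi(\Delta)=0$ forces $f_2(\Delta)=f_1(\Delta)-f_0(\Delta)=f_1(\Delta)-10$, hence $e=10>9$, a contradiction. For $d=4$, using $f_2(S)=2f_0(S)-4$ for any $2$-sphere $S$, we get $f_3(\Delta)=\sum_{v\in D}(2\deg_\Delta(v)-4)=2(f_1(\Delta)-e)-16$; on the other hand $\chi(\Delta)=0$ together with $f_2(\Delta)=2f_3(\Delta)$ (valid for closed $3$-manifolds) gives $f_3(\Delta)=f_1(\Delta)-f_0(\Delta)=f_1(\Delta)-13$, so $f_1(\Delta)=2e+3\leq 57$. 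But both bundles have $\beta_1(-;\Q)\neq 0$, so Theorem~\ref{t-covering} yields $\overline{g}_2(\Delta)\geq 4\binom{4}{2}=24$; since $\overline{g}_2(\Delta)=2h_2(\Delta)-3h_1(\Delta)=2f_1(\Delta)-93$, this forces $f_1(\Delta)\geq 59$, again a contradiction.

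I do not expect a serious obstacle here: this is a robust double-counting argument, and the only points requiring care are the elementary Euler-characteristic bookkeeping for closed $(d-1)$-manifolds and the verification that both $\mathbb{S}^{d-2}$-bundles over $\mathbb{S}^1$ have nonzero first rational Betti number, so that Theorem~\ref{t-covering} may be invoked when $d=4$. (The method stalls for $d\geq 5$ precisely because $f_{d-2}$ of a $(d-2)$-sphere is then no longer an affine function of its number of vertices alone, which is presumably why the proposition is stated only for $d\in\{3,4\}$.)
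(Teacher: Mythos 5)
Your proof is correct.

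For $d=3$ you run essentially the same double count the paper does (facets through the unique colour class of size~$4$, combined with the Euler relation), but you arrange it more economically: instead of computing the full $f$-vector and using an averaging argument to conclude that all vertex links are hexagons, you compare $f_2 = \sum_{v\in D}\deg(v) = f_1 - e$ directly with $f_2 = f_1 - f_0$ and contradict $e\le 9$. Same contradiction, slightly shorter path.

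For $d=4$ your route is genuinely more elementary than the paper's. The paper works with flag $h$-numbers: it invokes the generalized Dehn--Sommerville relations $h_S = h_{[4]-S}$ of Bayer--Billera for balanced Eulerian complexes to get $h_2 = 2\sum_{S\subseteq[3],\,|S|=2}h_S$, and bounds each $h_S\le 4$ to obtain $h_2\le 24$, contradicting $h_2\ge 26$ from Theorem~\ref{t-covering}. You instead count facets through the size-$4$ colour class, apply $f_2(\lk v)=2\deg(v)-4$ for the $2$-sphere links, and use the ordinary identities $\chi=0$ and $f_2=2f_3$ to arrive at $f_1 = 2e+3\le 57$, against $f_1\ge 59$. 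Unwinding the paper's inequalities shows they are numerically the same as yours ($h_2=f_1-33$, and the bound $\sum_{S\subseteq[3]}h_S\le 12$ is exactly $e\le 27$), so neither argument is stronger; the difference is that yours stays entirely within ordinary face counts and Euler characteristic bookkeeping, avoiding the balanced Dehn--Sommerville machinery. Your closing observation about why this method cannot reach $d\ge 5$ is also accurate.
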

\begin{proof} Assume to the contrary that $\Delta$ is a balanced $(3d+1)$-vertex triangulation of one of the $\mathbb{S}^{d-2}$-bundles over $\mathbb{S}^{1}$, and w.l.o.g.~assume that $|V_d(\Delta)|=4$, while $|V_i(\Delta)|=3$ for all $i\in[d-1]$. 

We start with the case of $d=3$. In this case, the link of each vertex must be an even cycle, and hence it must contain at most 6 vertices. On the other hand, since $\widetilde{\chi}(\Delta)=0-2+1=-1$ and $f_0(\Delta)=10$, it follows (e.g., from the Dehn-Sommerville relations) that $f_1(\Delta)=30$, and $f_2(\Delta)=20$. Thus the average degree of a vertex is $2f_1/f_0=60/10=6$, and we conclude that all vertex links are hexagons. However, since every 2-face contains exactly one vertex of color 3, it then follows that $f_2(\Delta)=4\times 6=24\neq 20$, which is a contradiction.

Assume now that $d=4$. Then $\Delta$ is an odd-dimensional manifold, and hence it is an Eulerian complex (i.e., for every face $F$ of $\Delta$, including the empty face, $\widetilde{\chi}(\lk_\Delta(F))=\widetilde{\chi}(\mathbb{S}^{d-1-|F|})$). Since $\beta_1(\Delta;\Q)\neq 0$, Theorem \ref{t-covering} implies that 
\[
2h_2(\Delta)\geq 3h_1(\Delta)+4\binom{4}{2}=3(13-4)+24=51, \quad \mbox{and so} \quad h_2(\Delta)\geq 26.
\]
On the other hand, 
\begin{equation}  \label{26-vs-24}
26\leq h_2(\Delta)=\sum_{\substack{S \subseteq [4]\\ |S| = 2}}h_S(\Delta)=\sum_{\substack{S \subseteq [3]\\ |S| = 2}}\left(h_S(\Delta)+h_{[4]-S}(\Delta)\right)=2\sum_{\substack{S \subseteq [3]\\ |S| = 2}}h_S(\Delta),\end{equation} where the last step is by the generalized Dehn-Sommerville relations \cite{BayerBillera}. Finally, if $S$ is any 2-element subset of $[3]$, then $h_S(\Delta)=f_1(\Delta_S)-f_0(\Delta_S)+1\leq 3\cdot 3-6+1=4$, and so the right-hand side of eq.~(\ref{26-vs-24}) is at most 24, which is a contradiction.
\end{proof} 

It would be very interesting to prove (or disprove) Conjecture \ref{no_triangulation} for all values of $d$. Even more intriguing question is to characterize all possible pairs $(f_0,f_1)$ of vertices and edges of any \textit{balanced} triangulation of an $\mathbb{S}^{d-2}$-bundle over $\mathbb{S}^{1}$. In the case of \textit{all} triangulations of such a space, this was done in \cite{Chestnut-Sapir-Swartz}.

\begin{remark}
In the time that this paper was under review, Zheng \cite{Zheng} proved the remaining cases of Conjecture \ref{no_triangulation}.
\end{remark}

%

\section*{Acknowledgments} We would like to thank Ed Swartz for helpful conversations and the referees for helpful suggestions.


{\small
\bibliography{balancedbib}
\bibliographystyle{plain}
}
\end{document}